\documentclass{amsart}

\usepackage{amsmath, amsthm, amssymb} 
\usepackage{graphicx} 
\usepackage{caption}
\usepackage{color} 
\usepackage{hyperref} 
\usepackage{mathrsfs} 


\usepackage[headings]{fullpage}


\newtheorem{thm}{Theorem}[section] %
\newtheorem{lemma}[thm]{Lemma} %
\newtheorem{cor}[thm]{Corollary} %
\newtheorem{prop}[thm]{Proposition} %
\theoremstyle{definition} %
\newtheorem{defn}[thm]{Definition} %
\newtheorem{remark}[thm]{Remark} %
\newtheorem{example}[thm]{Example} %
\newtheorem*{claim*}{Claim} %
\newtheorem*{ack*}{Acknowledgements} %
\newtheorem*{ex*}{Example} %

\DeclareMathOperator{\Out}{Out} %
\DeclareMathOperator{\CV}{CV} %
\DeclareMathOperator{\Aut}{Aut} %
\DeclareMathOperator{\MCG}{MCG} %
\DeclareMathOperator{\UPG}{UPG} %
\DeclareMathOperator{\Stab}{Stab} %
\DeclareMathOperator{\rk}{rank} %
\DeclareMathOperator{\tw}{tw} %


\title{Distortion for Abelian Subgroups of $\Out(F_n)$}%
\author{Derrick Wigglesworth} %
\address{\tt Department of Mathematics, University of Utah, 155 S.\
  1400 E.\, Salt Lake City, UT 84112 \newline
  http://www.math.utah.edu/\~{}dwiggles/} %
\email{\tt dwiggles@math.utah.edu}

\thanks{\today}

\begin{document}

\begin{abstract} We prove that abelian subgroups of the outer
  automorphism group of a free group are quasi-isometrically embedded.
  Our proof uses recent developments in the theory of train track maps
  by Feighn-Handel.  As an application, we prove the rank conjecture
  for $\text{Out}(F_n)$.
\end{abstract}

\thanks{ The author is partially supported by the NSF grant of Mladen
  Bestvina (DMS-1607236) and also acknowledges support from
  U.S. National Science Foundation grants DMS-1246989 and
  DMS-0932078.}

\subjclass[2010]{Primary 20F65}

\maketitle

\section{Introduction}
\label{sec:intro}
Given a finitely generated group $G$, a finitely generated subgroup
$H$ is \emph{undistorted} if the inclusion $H\hookrightarrow G$ is a
quasi-isometric embedding with respect to the word metrics on $G$ and
$H$ for some (any) finite generating sets.  A standard technique for
showing that a subgroup is undistorted involves finding a space on
which $G$ acts nicely and constructing a height function on this space
satisfying certain properties: elements which are large in the word
metric on $H$ should change the height function by a lot, elements of
a fixed generating set for $G$ should change the function by a
uniformly bounded amount.  In this paper, we use a couple of
variations of this method.

Let $R_n$ be the wedge of $n$ circles and let $F_n$ be its fundamental
group, the free group of rank $n\geq 2$.  The outer automorphism group
of the free group, $\Out(F_n)$, is defined as the quotient of
$\Aut(F_n)$ by the inner automorphisms, those which arise from
conjugation by a fixed element.  Much of the study of $\Out(F_n)$
draws parallels with the study of mapping class groups.  Furthermore,
many theorems concerning $\Out(F_n)$ and their proofs are inspired by
analogous theorems and proofs in the context of mapping class groups.
Both groups satisfy the Tits alternative \cite{Mc-Tits,BFHI}, both
have finite virtual cohomological dimension \cite{Har-VCD,CV-VCD}, and
both have Serre's property FA to name a few.  Importantly, this
approach to the study of $\Out(F_n)$ has yielded a classification of
its elements in analogy with the Nielsen-Thurston classification of
elements of the mapping class group \cite{BH-TT}, along with
constructive ways for finding good representatives of these elements
\cite{FH-CTS}.

In \cite{FLM-Rank1}, the authors proved that infinite cyclic subgroups
of the mapping class group are undistorted.  Their proof also implies
that higher rank abelian subgroups are undistorted.  In \cite{Ali-TL},
Alibegovi\'{c} proved that infinite cyclic subgroups of $\Out(F_n)$
are undistorted.  In contrast with the mapping class group setting,
her proof does not directly apply to higher rank subgroups: the
question of whether all abelian subgroups of $\Out(F_n)$ are
undistorted has been left open.  In this paper, we answer this in the
affirmative.

\newtheorem*{thm:undistorted}{Theorem \ref{thm:undistorted}}
\begin{thm:undistorted}
  Abelian subgroups of $\Out(F_n)$ are undistorted.
\end{thm:undistorted}

This theorem has implications for various open problems in the study
of $\Out(F_n)$.  In \cite{BM-Rank}, Behrstock and Minsky prove that
the geometric rank of the mapping class group is equal to the maximal
rank of an abelian subgroup of the mapping class group.  As an
application of Theorem \ref{thm:undistorted}, we prove the analogous
result in the $\Out(F_n)$ setting.

\newtheorem*{cor:rank-conj}{Corollary \ref{cor:rank}}
\begin{cor:rank-conj}
  The geometric rank of $\Out(F_n)$ is $2n-3$, which is the maximal
  rank of an abelian subgroup of $\Out(F_n)$.
\end{cor:rank-conj}

We remark that in principle, this could have been done earlier by
using the techniques in \cite{Ali-TL} to show that a specific maximal
rank abelian subgroup is undistorted.

In the course of proving Theorem \ref{thm:undistorted}, we show that,
up to finite index, only finitely many marked graphs are needed to get
good representatives of every element of an abelian subgroup of
$\Out(F_n)$.  In the setting of mapping class groups, the analogous
statement is that for a surface $S$ and an abelian subgroup $H$ of
$\MCG(S)$ there is a Thurston decomposition of $S$ into disjoint
subsurfaces which is respected by every element of $H$.  This can also
be viewed as a version of the Kolchin Theorem of \cite{BFHII} for
abelian subgroups.  We prove:

\newtheorem*{finite-graphs}{Proposition \ref{prop:finite-graphs}}
\begin{finite-graphs}
  For any abelian subgroup $H$ of $\Out(F_n)$, there exists a finite
  index subgroup $H'$ such that every $\phi\in H'$ can be realized as
  a CT on one of finitely many marked graphs.
\end{finite-graphs}

The paper is outlined as follows:

In section \ref{sec:trans} we prove that the translation distance of
an arbitrary element $\phi$ of $\Out(F_n)$ acting on Outer Space is
the maximum of the logarithm of the expansion factors associated to
the exponentially growing strata in a relative train track map for
$\phi$.  This result was obtained previously and independently by
Richard Wade in his thesis \cite{Wade-Thesis}.  This is the analog for
$\Out(F_n)$ of Bers' result \cite{Bers-Trans} that the translation
distance of a mapping class $f$ acting on Teichm\"{u}ller space
endowed with the Teichm\"{u}ller metric is the maximum of the
logarithms of the dilatation constants for the pseudo-Anosov
components in the Thurston decomposition of $f$.  In section
\ref{sec:EG-case} we then use our result on translation distance to
prove the main theorem in the special case where the abelian subgroup
$H$ has ``enough'' exponential data.  More precisely, we will prove
the result under the assumption that the collection of expansion
factor homomorphisms determines an injective map $H\to \mathbb{Z}^N$.

In section \ref{sec:finite-graphs} we prove Proposition
\ref{prop:finite-graphs} and then use this in section
\ref{sec:PG-case} to prove the main result in the case that $H$ has
``enough'' polynomial data.  This is the most technical part of the
paper because we need to obtain significantly more control over the
types of sub-paths that can occur in nice circuits in a marked graph
than was previously available.  The bulk of the work goes towards
proving Proposition \ref{prop:split-circuit}.  This result provides a
connection between the comparison homomorphisms introduced in
\cite{FH-AB} (which are only defined on subgroups of $\Out(F_n)$) and
Alibegovi\'{c}'s twisting function.  We then use this connection to
complete the proof of our main result in the polynomial case.

Finally, in section \ref{sec:mixed-case} we consolidate results from
previous sections to prove Theorem \ref{thm:undistorted}.  The methods
used in sections \ref{sec:EG-case} and \ref{sec:PG-case} can be
carried out with minimal modification in the general setting.

I would like to thank my advisor Mladen Bestvina for many hours of his
time and for his patience.  I would also like to thank Mark Feighn for
his encouragement and support.  Finally, I would also like to express
my gratitude to Radhika Gupta for patiently listening to me go on
about completely split paths for weeks on end and to MSRI for its
hospitality and partial support.

\section{Preliminaries}
\label{sec:preliminaries}
Identify $F_n$ with $\pi_1(R_n,*)$ once and for all.  A \emph{marked
  graph} $G$ is a finite graph of rank $n$ with no valence one
vertices equipped with a homotopy equivalence $\rho\colon R_n\to G$
called a \emph{marking}.  The marking identifies $F_n$ with
$\pi_1(G)$.  As such, a homotopy equivalence $f\colon G\to G$
determines an (outer) automorphism $\phi$ of $F_n$.  We say that
$f\colon G\to G$ \emph{represents} $\phi$.  All homotopy equivalences
will be assumed to map vertices to vertices and the restriction to any
edge will be assumed to be an immersion.

Let $\Gamma$ be the universal cover of the marked graph $G$.  A
\emph{path} in $G$ (resp.\ $\Gamma$) is either an isometric immersion
of a (possibly infinite) closed interval $\sigma\colon I\to G$ (resp.\
$\Gamma$) or a constant map $\sigma\colon I\to G$ (resp.\ $\Gamma$).
If $\sigma$ is a constant map, the path will be called \emph{trivial}.
If $I$ is finite, then any map $\sigma\colon I\to G$ (resp.\ $\Gamma$)
is homotopic rel endpoints to a unique path $[\sigma]$.  We say that
$[\sigma]$ is obtained by \emph{tightening} $\sigma$.  If
$f\colon G\to G$ is a homotopy equivalence and $\sigma$ is a path in
$G$, we define $f_\#(\sigma)$ as $[f(\sigma)]$.  If
$\tilde{f}\colon \Gamma\to \Gamma$ is a lift of $f$, we define
$\tilde{f}_\#$ similarly.  If the domain of $\sigma$ is finite, then
the image has a natural decomposition into edges $E_1E_2\cdots E_k$
called the \emph{edge path associated to} $\sigma$.

A \emph{circuit} is an immersion $\sigma\colon S^1\to G$.  For any
path or circuit, let $\overline{\sigma}$ be $\sigma$ with its
orientation reversed.  A decomposition of a path or circuit into
subpaths is a \emph{splitting} for $f\colon G\to G$ and is denoted
$\sigma=\ldots\sigma_1\cdot\sigma_2\ldots$ if
$f^k_\#(\sigma)=\ldots f^k_\#(\sigma_1)f^k_\#(\sigma_2)\ldots$ for all
$k\geq 1$.

Let $G$ be a graph.  An unordered pair of oriented edges $\{E_1,E_2\}$
is a \emph{turn} if $E_1$ and $E_2$ have the same initial endpoint.
As with paths, we denote by $\overline{E}$, the edge $E$ with the
opposite orientation.  If $\sigma$ is a path which contains
$\ldots\overline{E}_1 E_2\ldots$ or $\ldots\overline{E}_1 E_2\ldots$
in its edge path, then we say $\sigma$ \emph{takes} the turn
$\{E_1,E_2\}$.  A \emph{train track structure} on $G$ is an
equivalence relation on the set of edges of $G$ such that
$E_1\sim E_2$ implies $E_1$ and $E_2$ have the same initial vertex.  A
turn $\{E_1,E_2\}$ is \emph{legal} with respect to a train track
structure if $E_1\nsim E_2$.  A path is legal if every turn crossed by
the associated edge path is legal.  The equivalence classes of this
relation are called \emph{gates}.  A homotopy equivalence
$f\colon G\to G$ induces a train track structure on $G$ as follows.
$f$ determines a map $Df$ on oriented edges in $G$ by definining
$Df(E)$ to be the first edge in the edge path $f(E)$.  We then declare
$E_1\sim E_2$ if $D(f^k)(E_1)=D(f^k)(E_2)$ for some $k\geq 1$.

A \emph{filtration} for a representative $f\colon G\to G$ of an outer
automorphism $\phi$ is an increasing sequence of $f$-invariant
subgraphs $\emptyset=G_0\subset G_1\subset\cdots\subset G_m=G$.  We
let $H_i=\overline{G_i\setminus G_{i-1}}$ and call $H_i$ the
\emph{$i$-th stratum}.  A turn with one edge in $H_i$ and the other in
$G_{i-1}$ is called \emph{mixed} while a turn with both edges in $H_i$
is called a \emph{turn in $H_i$}.  If $\sigma\subset G_i$ does not
contain any illegal turns in $H_i$, then we say $\sigma$ is
\emph{$i$-legal}.

We denote by $M_i$ the submatrix of the transition matrix for $f$
obtained by deleting all rows and columns except those labeled by
edges in $H_i$.  For the representatives that will be of interest to
us, the transition matrices $M_i$ will come in three flavors: $M_i$
may be a zero matrix, it may be the $1\times 1$ identity matrix, or it
may be an irreducible matrix with Perron-Frobenius eigenvalue
$\lambda_i>1$.  We will call $H_i$ a \emph{zero} (Z),
\emph{non-exponentially growing} (NEG), or \emph{exponentially
  growing} (EG) stratum according to these possibilities.  Any stratum
which is not a zero stratum is called an \emph{irreducible stratum}.

\begin{defn}[{\cite{BH-TT}}]
  We say that $f\colon G\to G$ is a \emph{relative train track map}
  representing $\phi\in\Out(F_n)$ if for every exponentially growing
  stratum $H_r$, the following hold:
  \begin{description}
  \item[(RTT-i)]\label{RTT-i} $Df$ maps the set of oriented edges in
    $H_r$ to itself; in particular all mixed turns are legal.
  \item[(RTT-ii)]\label{RTT-ii} If $\sigma\subset G_{r-1}$ is a
    nontrivial path with endpoints in $H_r\cap G_{r-1}$, then so is
    $f_\#(\sigma)$.
  \item[(RTT-iii)]\label{RTT-iii} If $\sigma\subset G_r$ is $r$-legal,
    then $f_\#(\sigma)$ is $r$-legal.
  \end{description}
\end{defn}

Suppose that $u<r$, that $H_u$ is irreducible, $H_r$ is EG and each
component of $G_r$ is non-contractible, and that for each $u<i<r$,
$H_i$ is a zero stratum which is a component of $G_{r-1}$ and each
vertex of $H_i$ has valence at least two in $G_r$.  Then we say that
$H_i$ is \emph{enveloped by $H_r$} and we define
$H_r^z=\bigcup_{k=u+1}^rH_k$.

A path or circuit $\sigma$ in a representative $f\colon G\to G$ is
called a \emph{periodic Nielsen path} if $f_\#^k(\sigma)=\sigma$ for
some $k\geq 1$.  If $k=1$, then $\sigma$ is a \emph{Nielsen path}.  A
Nielsen path is \emph{indivisible} if it cannot be written as a
concatenation of non-trivial Nielsen paths.  If $w$ is a closed
root-free Nielsen path and $E_i$ is an edge such that
$f(E_i)=E_iw^{d_i}$, then we say \emph{$E$ is a linear edge} and we
call $w$ the \emph{axis} of $E$.  If $E_i,E_j$ are distinct linear
edges with the same axis such that $d_i\neq d_j$ and $d_i,d_j>0$, then
we call a path of the form $E_iw^*\overline{E}_j$ an \emph{exceptional
  path}.  In the same scenario, if $d_i$ and $d_j$ have different
signs, we call such a path a \emph{quasi-exceptional path}.  We say
that $x$ and $y$ are \emph{Nielsen equivalent} if there is a Nielsen
path $\sigma$ in $G$ whose endpoints are $x$ and $y$.  We say that a
periodic point $x\in G$ is \emph{principal} if neither of the
following conditions hold:
\begin{itemize}
\item $x$ is not an endpoint of a non-trivial periodic Nielsen path
  and there are exactly two periodic directions at $x$, both of which
  are contained in the same EG stratum.
\item $x$ is contained in a component $C$ of periodic points that is
  topologically a circle and each point in $C$ has exactly two
  periodic directions.
\end{itemize}
A relative train track map $f$ is called \emph{rotationless} if each
principal periodic vertex is fixed and if each periodic direction
based at a principal vertex is fixed.  We remark that there is a
closely related notion of an outer automorphism $\phi$ being
rotationless.  We will not need this definition, but will need the
following relevant facts from \cite{FH-AB}:

\begin{thm}[{\cite[Corollary 3.5]{FH-AB}}]
  There exists $k>0$ depending only on $n$, so that $\phi^k$ is
  rotationless for every $\phi\in\Out(F_n)$.
\end{thm}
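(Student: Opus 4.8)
The plan is to establish that the amount of ``rotation'' present in an arbitrary relative train track representative is bounded solely in terms of $n$, and then to kill it by passing to an appropriate uniform power. Concretely, I would produce a constant $C=C(n)$ with the property that for every $\phi\in\Out(F_n)$ and every relative train track representative $f\colon G\to G$ of $\phi$ (which exists by \cite{BH-TT}), each periodic vertex of $f$, each periodic direction of $f$, and each indivisible periodic Nielsen path of $f$ is in fact fixed by $f^{C}$. Then setting $k=k(n)=\operatorname{lcm}\{1,\dots,C\}$, which depends only on $n$, will do the job.

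For the bound, the combinatorial complexity of $G$ is controlled by $n$ --- in its natural (un-subdivided) structure a rank-$n$ marked graph has at most $2n-2$ vertices and $3n-3$ edges, hence at most $6n-6$ directions --- so the $f$-orbits of periodic natural vertices and the $Df$-orbits of periodic directions have size bounded in terms of $n$. One must additionally bound the periods of the valence-two periodic vertices (such as endpoints of linear edges) and of the indivisible periodic Nielsen paths; here one invokes the structure theory of \cite{BH-TT,FH-AB} to see that there are boundedly many linear edges and boundedly many indivisible periodic Nielsen paths, each of period bounded in terms of $n$. Granting this, fix $C=C(n)$ exceeding all these bounds and let $k=\operatorname{lcm}\{1,\dots,C\}$. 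The other ingredient is that $f^k\colon G\to G$ is again a relative train track map, now representing $\phi^k$: the filtration is unchanged, a $Z$/NEG/EG stratum for $f$ stays one for $f^k$ (with Perron--Frobenius eigenvalue $\lambda_i^{k}$ in the EG case), and properties (RTT-i)--(RTT-iii) pass to iterates. Moreover $f$ and $f^k$ have the same periodic vertices, the same periodic directions at each of them, and the same periodic Nielsen paths; in particular they have the same principal periodic vertices, since principality is decided entirely by this common data (whether the point is an endpoint of a nontrivial periodic Nielsen path, and the number and location of periodic directions at it).

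Finally, by the choice of $k$, every periodic vertex of $f$ --- in particular every principal periodic vertex of $f^k$ --- is fixed by $f^k$, and every periodic direction based at such a vertex is fixed by $Df^k$, so $f^k$ is by definition a rotationless relative train track map; this is exactly the assertion that $\phi^k$ is rotationless (the ``closely related'' outer-automorphism-level notion is obtained the same way, using CTs from \cite{FH-CTS}). I expect the genuine obstacle to be the uniform bound $C(n)$ on the periods of periodic Nielsen paths and of valence-two principal periodic vertices: unlike the counting of natural vertices and directions, this requires real use of the relative train track formalism --- one has to argue that linear edges, exceptional paths, and indivisible periodic Nielsen paths cannot proliferate or acquire arbitrarily large period as $\phi$ ranges over $\Out(F_n)$ with $n$ fixed.
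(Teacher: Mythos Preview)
The paper does not prove this statement: it is quoted in the Preliminaries section as \cite[Corollary~3.5]{FH-AB} with no argument supplied, so there is no ``paper's own proof'' to compare against. Your sketch is in fact close to the actual argument in \cite{FH-AB}: one bounds the periods of all relevant periodic data (vertices, directions, Nielsen paths) by a constant depending only on the combinatorics of rank-$n$ graphs, and then takes $k$ to be the least common multiple of the integers up to that bound.

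Two remarks on your write-up. First, you correctly flag the real work: bounding the periods of indivisible periodic Nielsen paths and of non-natural principal vertices is not a counting argument and uses the structure theory of EG strata (there is at most one indivisible periodic Nielsen path of each EG height, and its period is tied to the combinatorics of the stratum). Second, be careful with the last step. You show that $f^k$ is a rotationless relative train track map, but the statement concerns $\phi^k$ being rotationless as an outer automorphism --- a notion defined in terms of principal automorphisms and their fixed-point data in $\partial F_n$, not in terms of a chosen representative. The bridge you need is \cite[Proposition~3.29]{FH-Recog}, which says $\psi$ is rotationless if and only if some (equivalently any) CT representative of $\psi$ is rotationless; your parenthetical gesture toward this is right, but it is a genuine ingredient rather than an afterthought.
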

\begin{thm}[{\cite[Corollary 3.14]{FH-AB}}]
  For each abelian subgroup $A$ of $\Out(F_n)$, the set of
  rotationless elements in $A$ is a subgroup of finite index in $A$.
\end{thm}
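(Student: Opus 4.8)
The natural first step is to reduce to a closure statement. Write $R_A$ for the set of rotationless elements of $A$. The preceding theorem supplies a $k=k(n)$ with $\phi^{k}$ rotationless for every $\phi\in\Out(F_n)$, so $A^{k}:=\{\phi^{k}:\phi\in A\}$, which is a subgroup (the image of the homomorphism $\phi\mapsto\phi^{k}$ on the abelian group $A$), is contained in $R_A$; and since $\Out(F_n)$ is virtually torsion free and all of its abelian subgroups have finite rank, $[A:A^{k}]<\infty$. Hence it suffices to prove that $R_A$ is a subgroup of $A$: then $A^{k}\le R_A\le A$ forces $[A:R_A]\le[A:A^{k}]<\infty$. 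Moreover, positive powers of a rotationless element are rotationless and $\phi$ is rotationless if and only if $\phi^{-1}$ is (basic facts from \cite{FH-AB}), so the entire statement comes down to showing that \emph{$R_A$ is closed under products}, and it is here that commutativity of $A$ must be used.

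The plan for product-closure is to detect rotationlessness of $\phi\in A$ through the permutations it induces on the finitely many canonical pieces of ``rotation data'' attached to $A$: the attracting laminations of elements of $A$ (a finite set, by the lamination theory of Bestvina--Feighn--Handel), together with the principal lifts, the periodic directions, and the periodic linear and exceptional Nielsen-path data carried by elements of $A$. Because $A$ is abelian, every element carries the rotation data of any other to itself (since $\psi\phi\psi^{-1}=\phi$), so $A$ acts on each of these finite sets by permutations; the aim is to show that $\phi$ is rotationless exactly when the induced permutations fix its own laminations, principal lifts, and periodic directions, and that this condition, being cut out by composing permutations, is inherited by $\phi\psi$. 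Concretely, for commuting rotationless $\phi,\psi$ one would build relative train track representatives on a common marked graph with compatible filtrations, strata, and laminations (as in the analysis underlying Proposition~\ref{prop:finite-graphs}) so that a composite homotopy equivalence represents $\phi\psi$, and then check that its periodic directions are fixed, its periodic Nielsen paths --- decomposed via the complete splitting of \cite{FH-CTS} --- are Nielsen paths, and its periodic attracting laminations, which lie in the already-invariant finite set, are invariant, each step using that both factors are rotationless.

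The main obstacle is precisely this transfer of control to the product: proving finiteness of the various data sets attached to the abelian group $A$, and then showing that having ``no rotation'' passes from $\phi$ and $\psi$ to $\phi\psi$. The delicate point is the interaction between the (a priori $A$-permuted) attracting laminations of one element and the principal-lift, periodic-direction, and Nielsen-path structure of another, which has to be made to cooperate uniformly across the entire commuting family; this is exactly the substance of the sequence of lemmas in \cite{FH-AB} that precedes this corollary. By contrast, the reduction via the uniform power $k$, the resulting index estimate, and closure under inverses are all routine.
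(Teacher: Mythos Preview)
The paper does not give a proof of this statement: it is quoted verbatim as \cite[Corollary~3.14]{FH-AB} and used as a black box, so there is no ``paper's own proof'' to compare against.

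As for your proposal on its own terms: the first paragraph is correct and is the standard reduction. The uniform power $k=k(n)$ gives $A^{k}\subset R_A$, the finite-rank abelian argument gives $[A:A^{k}]<\infty$, and closure under inversion is fine, so everything does reduce to product-closure. But your second and third paragraphs are not a proof of product-closure; they are an outline of the intended mechanism, and you explicitly defer the substance (``this is exactly the substance of the sequence of lemmas in \cite{FH-AB} that precedes this corollary''). In particular, the assertions that $A$ acts on a \emph{finite} set of principal lifts and periodic-direction data, and that fixing this data is equivalent to being rotationless in the abstract sense, are precisely the nontrivial content of \cite{FH-AB}, not something you have established. The suggestion to place commuting rotationless $\phi,\psi$ on a common marked graph with compatible CT structure is also not available at this point in the logic: the construction of such common representatives (as in Proposition~\ref{prop:finite-graphs}) already \emph{uses} the fact that the rotationless elements form a subgroup. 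So as written this is a reasonable roadmap of how \cite{FH-AB} proceeds, but it is not an independent proof, and it should not be presented as one.
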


For an EG stratum, $H_r$, we call a non-trivial path
$\sigma\subset G_{r-1}$ with endpoints in $H_r\cap G_{r-1}$ a
\emph{connecting path for $H_r$}.  Let $E$ be an edge in an
irreducible stratum, $H_r$ and let $\sigma$ be a maximal subpath of
$f_\#^k(E)$ in a zero stratum for some $k\geq 1$.  Then we say that
$\sigma$ is \emph{taken}.  A non-trivial path or circuit $\sigma$ is
called \emph{completely split} if it has a splitting
$\sigma=\tau_1\cdot \tau_2\cdots\tau_k$ where each of the $\tau_i$'s
is a single edge in an irreducible stratum, an indivisible Nielsen
path, an exceptional path, or a connecting path in a zero stratum
which is both maximal and taken.  We say that a relative train track
map is \emph{completely split} if $f(E)$ is completely split for every
edge $E$ in an irreducible stratum \emph{and} if for every taken
connecting path $\sigma$ in a zero stratum, $f_\#(\sigma)$ is
completely split.

\begin{defn}[{\cite{FH-Recog}}] \label{def:ct} A relative train track
  map $f\colon G\to G$ and filtration $\mathcal{F}$ given by
  $\emptyset=G_0\subset G_1\subset\cdots\subset G_m=G$ is said to be a
  CT if it satisfies the following properties.
  \begin{description}
  \item[(Rotationless)] $f\colon G\to G$ is rotationless.
  \item[(Completely Split)] $f\colon G\to G$ is completely split.
  \item[(Filtration)] $\mathcal{F}$ is reduced.  The core of each
    filtration element is a filtration element.
  \item[(Vertices)] The endpoints of all indivisible periodic
    (necessarily fixed) Nielsen paths are (necessarily principal)
    vertices.  The terminal endpoint of each non-fixed NEG edge is
    principal (and hence fixed).
  \item[(Periodic Edges)] Each periodic edge is fixed and each
    endpoint of a fixed edge is principal.  If the unique edge $E_r$
    in a fixed stratum $H_r$ is not a loop then $G_{r-1}$ is a core
    graph and both ends of $E_r$ are contained in $G_{r-1}$.
  \item[(Zero Strata)] If $H_i$ is a zero stratum, then $H_i$ is
    enveloped by an EG stratum $H_r$, each edge in $H_i$ is $r$-taken
    and each vertex in $H_i$ is contained in $H_r$ and has link
    contained in $H_i \cup H_r$.
  \item[(Linear Edges)] For each linear $E_i$ there is a closed
    root-free Nielsen path $w_i$ such that $f(E_i) = E_i w_i^{d_i}$
    for some $d_i \ne 0$.  If $E_i$ and $E_j$ are distinct linear
    edges with the same axes then $w_i = w_j$ and $d_i \ne d_j$.
  \item[(NEG Nielsen Paths)] If the highest edges in an indivisible
    Nielsen path $\sigma$ belong to an NEG stratum then there is a
    linear edge $E_i$ with $w_i$ as in (Linear Edges) and there exists
    $k \ne 0$ such that $\sigma = E_i w_i^k \bar E_i$.
  \item[(EG Nielsen Paths)] If $H_r$ is EG and $\rho$ is an
    indivisible Nielsen path of height $r$, then
    $f|G_r = \theta\circ f_{r-1}\circ f_{r}$ where :
    \begin{enumerate}
    \item $f_r : G_r \to G^1$ is a composition of proper extended
      folds defined by iteratively folding $\rho$.
    \item $f_{r-1} : G^1 \to G^2$ is a composition of folds involving
      edges in $G_{r-1}$.
    \item $\theta : G^2 \to G_r$ is a homeomorphism.
    \end{enumerate}
  \end{description}
\end{defn}

We remark that several of the properties in Definition \ref{def:ct}
use terms that have not been defined.  We will not use these
properties in the sequel.  The main result for CTs is the following
existence theorem:

\begin{thm}[{\cite[Theorem 4.28]{FH-Recog}}]
  Every rotationless $\phi\in \Out(F_n)$ is represented by a CT
  $f\colon G\to G$.
\end{thm}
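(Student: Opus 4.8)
The plan is to start from a relative train track representative of $\phi$ and to improve it by a finite sequence of elementary moves until every clause of Definition~\ref{def:ct} holds. By Bestvina--Handel \cite{BH-TT} the automorphism $\phi$ is represented by some relative train track map $f_0\colon G_0\to G_0$ with a filtration $\mathcal F_0$, and, using the hypothesis that $\phi$ is rotationless, one first passes to a representative which is rotationless as a relative train track map, so the \textbf{(Rotationless)} clause holds at the outset. The moves used to improve $f$ are the standard ones: subdivision (at a vertex or at a preimage of a vertex), folding of an initial segment of an edge, ``sliding'' the terminal endpoint of an NEG edge along an edge path, collapsing an $f$-invariant subforest, and refining or reordering the filtration. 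Each move produces a homotopy equivalence representing the same outer automorphism, so the content is entirely in showing that finitely many moves suffice to reach a CT.

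First I would fix the coarse structure. The \textbf{(Filtration)} clause --- $\mathcal F$ reduced, cores of filtration elements being filtration elements --- is obtained by inserting the cores of the $G_i$ into the filtration and collapsing any stratum that can be removed without destroying the relative train track property; this only decreases the number of strata. Next I would arrange the \textbf{(Periodic Edges)} and \textbf{(Vertices)} clauses: subdivide so that each NEG stratum becomes a single edge, push non-fixed NEG edges down in the filtration, and slide their terminal endpoints onto principal (hence fixed) vertices, while the fixed strata become single fixed edges or loops based at principal vertices. For an EG stratum $H_r$ I would invoke the Bestvina--Handel fact that $H_r$ supports at most one indivisible Nielsen path $\rho$ up to reversal, and fold $\rho$ iteratively --- the proper extended folds in the \textbf{(EG Nielsen Paths)} clause --- to put $f|G_r$ into the stated normal form, checking all the while that (RTT-i)--(RTT-iii) persist.

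The heart of the argument, and the main obstacle, is to secure \textbf{(Completely Split)} together with the auxiliary clauses \textbf{(Zero Strata)}, \textbf{(Linear Edges)} and \textbf{(NEG Nielsen Paths)}, and to prove termination. The strategy is an induction up the filtration: assuming $f|G_{r-1}$ is already a CT, one improves the top stratum $H_r$. If $H_r$ is EG, one must show that after finitely many folds $f_\#^k(E)$ splits into single edges of $H_r$, maximal taken connecting paths lying in zero strata enveloped by $H_r$, and completely split paths in $G_{r-1}$; arranging the connecting paths to be taken and maximal is exactly what forces \textbf{(Zero Strata)}. If $H_r$ is a single NEG edge $E_r$, one analyzes $f(E_r)=E_r\cdot u$ with $u\subset G_{r-1}$: either $u$ can be made a completely split path, giving a completely split image directly, or $E_r$ is linear with $f(E_r)=E_r w^{d}$ for a closed root-free Nielsen path $w$, and one must fold and slide so that linear edges sharing an axis acquire a common $w$ and distinct multipliers, yielding \textbf{(Linear Edges)}; the indivisible Nielsen paths of NEG height are then forced into the exceptional-type form $E_i w_i^{k}\bar E_i$ of \textbf{(NEG Nielsen Paths)}. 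The difficulty is that a fold performed to fix up $H_r$ can reintroduce illegal turns, create new periodic Nielsen paths, or spoil the \textbf{(Vertices)} clause in lower strata. To control this one introduces a complexity --- a lexicographically ordered tuple recording, in order, the number of EG strata, the sizes of the associated transition matrices, the number of NEG and zero strata, the total number of edges, and a count of the maximal subpaths occurring in iterated images of edges that are not yet splitting units --- and verifies that each improvement move strictly decreases this tuple while preserving everything already achieved. Since the complexity takes values in a well-ordered set, the process terminates, and it can only terminate when $f$ satisfies every clause of Definition~\ref{def:ct}. Checking that each move has the claimed effect on the complexity, and that the relative train track and rotationless conditions genuinely survive folding and sliding, is the bulk of the work.
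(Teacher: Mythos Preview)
This theorem is not proved in the present paper: it is stated as a citation of \cite[Theorem~4.28]{FH-Recog} and used as a black box, so there is no ``paper's own proof'' to compare against. Your sketch is therefore not competing with anything here.

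That said, as a standalone outline your proposal is in the right spirit but would not survive as written. The actual Feighn--Handel argument does proceed by modifying a relative train track representative via sliding, folding, collapsing, and filtration adjustments, and it is organized as an induction up the strata. However, the termination is not governed by a single lexicographic complexity of the kind you describe; rather, each clause is established by a dedicated construction (often building on the improved relative train tracks of \cite{BFHI}), and a substantial part of the work is proving that the modifications needed for later clauses do not destroy earlier ones --- this is handled case by case, not by a monotone invariant. Your claimed complexity tuple is the weak point: several of the moves (notably sliding and the EG folding in \textbf{(EG Nielsen Paths)}) can increase the number of edges or the count of ``non-splitting'' subpaths before they eventually decrease, so the tuple as stated is not monotone, and you have given no argument that it is well-founded under the actual sequence of moves. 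If you want to turn this into a proof you would need either to exhibit a genuinely decreasing complexity (none is known that is this simple) or, as Feighn--Handel do, to replace the complexity descent by explicit verification that each stage terminates and preserves prior achievements.
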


For completely split paths and circuits, all cancellation under
iteration of $f_{\#}$ is confined to the individual terms of the
splitting.  Moreover, $f_\#(\sigma)$ has a complete splitting which
refines that of $\sigma$.  Finally, just as with improved relative
train track maps introduced in \cite{BFHI}, every circuit or path with
endpoints at vertices eventually is completely split.

Culler and Vogtmann's outer space, $\CV_n$, is defined as the space of
homothety classes of free minimal actions of $F_n$ on simplicial
metric trees.  Outer Space has a (non-symmetric) metric defined in
analogy with the Teichm\"{u}ller metric on Teichm\"{u}ller space.  The
distance from $T$ to $T'$ is defined as the logarithm of the infimal
Lipschitz constant among all $F_n$-equivariant maps $f\colon T\to T'$.

Let $\Gamma$ be the universal cover of the marked graph $G$.  Each
non-trivial $c\in F_n$ acts by a \emph{covering translation}
$T_c\colon\Gamma\to \Gamma$ which is a hyperbolic isometry, and
therefore has an \emph{axis} which we denote by $A_c$.  The projection
of $A_c$ to $G$ is the circuit corresponding to the conjugacy class
$c$.  If $E$ is a linear edge in a CT so that $f(E)=Ew^d$ as in
(Linear Edges), then we say $w$ is the axis of $E$.

The \emph{space of lines} in $\Gamma$ is denoted
$\tilde{\mathcal{B}}(\Gamma)$ and is the set
$((\partial \Gamma\times\partial \Gamma)\setminus
\Delta)/\mathbb{Z}_2$ (where $\Delta$ denotes the diagonal and
$\mathbb{Z}_2$ acts by interchanging the factors) equipped with the
compact-open topology.  The space of abstract lines is denoted by
$\partial^2F_n$ and defined by
$((\partial F_n\times \partial F_n)\setminus \Delta)/\mathbb{Z}_2$.
The action of $F_n$ on $\partial F_n$ (resp.\ $\partial \Gamma$)
induces an action on $\partial^2F_n$ (resp.\
$\tilde{\mathcal{B}}(\Gamma)$).  The marking of $G$ defines an
$F_n$-equivariant homeomorphism between $\partial^2F_n$ and
$\tilde{\mathcal{B}}(\Gamma)$.  The quotient of
$\tilde{\mathcal{B}}(\Gamma)$ by the $F_n$ action is the \emph{space
  of lines} in $G$ and is denoted $\mathcal{B}(G)$.  The space of
abstract lines in $R_n$ is denoted by $\mathcal{B}$.

A \emph{lamination}, $\Lambda$, is a closed set of lines in $G$, or
equivalently, a closed $F_n$-invariant subset of
$\tilde{\mathcal{B}}(\Gamma)$.  The elements of a lamination are its
\emph{leaves}.  Associated to each $\phi\in\Out(F_n)$ is a finite
$\phi$-invariant set of \emph{attracting laminations}, denoted by
$\mathcal{L}(\phi)$.  In the coordinates given by a relative train
track map $f\colon G\to G$ representing $\phi$, the attracting
laminations for $\phi$ are in bijection with the EG strata of $G$.

For each attracting lamination $\Lambda^+\in\mathcal{L}(\phi)$, there
is an associated \emph{expansion factor homomorphism},
$PF_{\Lambda^+}\colon \Stab_{\Out(F_n)}(\Lambda^+)\to \mathbb{Z}$
which has been studied in \cite{BFHI}.  We briefly describe the
essential features of $PF_{\Lambda^+}$ here, but we direct the reader
to \cite{BFHI} for more details on lines, laminations, and expansion
factor homomorphisms.  For each $\psi\in\Stab(\Lambda^+)$, at most one
of $\mathcal{L}(\psi)$ and $\mathcal{L}(\psi^{-1})$ can contain
$\Lambda^+$.  If neither $\mathcal{L}(\psi)$ nor
$\mathcal{L}(\psi^{-1})$ contains $\Lambda^+$, then
$PF_{\Lambda^+}(\psi)=0$.  Let $f\colon G\to G$ be a relative train
track map representing $\psi$.  If $\Lambda^+\in\mathcal{L}(\psi)$ and
$H_r$ is the EG stratum of $G$ associated to $\Lambda^+$ with
corresponding PF eigenvalue $\lambda_r$, then
$PF_{\Lambda^+}(\psi)=\log \lambda_r$.  Conversely, if
$\Lambda^+\in\mathcal{L}(\psi^{-1})$, then
$PF_{\Lambda^+}(\psi)=-\log\lambda_r$, where $\lambda_r$ is the PF
eigenvalue for the EG stratum of a RTT representative of $\psi^{-1}$
which is associated to $\Lambda^+$.  The image of $PF_{\Lambda^+}$ is
a discrete subset of $\mathbb{R}$ which we will frequently identify
with $\mathbb{Z}$.

For $\phi\in \Out(F_n)$, each element $\Lambda^+\in\mathcal{L}(\phi)$
has a \emph{paired lamination} in $\mathcal{L}(\phi^{-1})$ which is
denoted by $\Lambda^-$.  The paired lamination is characterized by the
fact that it has the same free factor support as $\Lambda^+$.  That
is, the minimal free factor carrying $\Lambda^+$ is the same as that
which carries $\Lambda^-$.  We denote the pair
$\{\Lambda^+,\Lambda^-\}$ by $\Lambda^\pm$.

\section{Translation Lengths in $\CV_n$}
\label{sec:trans}
In this section, we will compute the translation distance for an
arbitrary element of $\Out(F_n)$ acting on Outer Space.  As is
standard, for $\phi\in\Out(F_n)$ we define the translation distance of
$\phi$ on Outer Space as
$\tau(\phi)=\lim_{n\to\infty}\frac{d(x,x\cdot \phi^n)}{n}$.  It is
straightforward to check that this is independent of $x\in\CV_n$.  For
the remainder of this section $\phi\in\Out(F_n)$ will be fixed, and
$f\colon G\to G$ will be a relative train track map representing
$\phi$ with filtration
$\emptyset=G_0\subset G_1\subset\cdots\subset G_m=G$.

\begin{lemma}
  \label{lem:taulengthfn}
  If $H_r$ is an exponentially growing stratum of $G$, then there
  exists a metric $\ell$ on $G$ such that
  $\ell(f_\#(E))\geq \lambda_r \ell(E)$ for every edge $E\in H_r$,
  where $\lambda_r$ is the Perron-Frobenius eigenvalue associated to
  $H_r$.
\end{lemma}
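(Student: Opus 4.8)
The plan is to use the Perron--Frobenius eigenvector of the transition submatrix $M_r$ to weight the edges of $H_r$, and then to extend this weighting to all of $G$ in a way that does not interfere with the estimate on $H_r$. First I would let $v=(v_E)_{E\in H_r}$ be a positive left (or right, depending on convention) Perron--Frobenius eigenvector for the irreducible matrix $M_r$, so that $M_r v=\lambda_r v$ with every $v_E>0$. Assign to each edge $E\in H_r$ the length $\ell(E)=v_E$. The point of (RTT-i) is that for $E\in H_r$ the edge path $f_\#(E)=f(E)$ — there is no cancellation at the level of the edge path of a single edge — crosses each edge of $H_r$ exactly $(M_r)_{E'E}$ times, plus possibly some edges lying in $G_{r-1}$. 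Therefore $\ell(f_\#(E))\geq \sum_{E'\in H_r}(M_r)_{E'E}\,v_{E'}=\lambda_r v_E=\lambda_r\ell(E)$, where the inequality comes from discarding the (non-negative) contribution of the lower strata. This already gives the inequality on $H_r$; the only remaining issue is to specify $\ell$ on the edges of $G\setminus H_r$ so that $\ell$ is an honest metric (all edge lengths positive) — and this is harmless, since those edges play no role in the bound. One may, for instance, simply declare every edge not in $H_r$ to have length $1$, or, if a compatibility with the PF structure of lower strata is wanted, inductively use PF eigenvectors on the lower irreducible strata and a fixed small constant on the zero strata; either choice works for the statement as given.

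The one subtlety worth checking carefully is the claim that $f_\#(E)=f(E)$ as edge paths for $E\in H_r$, i.e.\ that there is no backtracking to tighten. This follows from (RTT-iii): a single edge $E\in H_r$ is an $r$-legal path (it crosses no turn at all), hence $f_\#(E)$ is $r$-legal; combined with (RTT-i), which says $Df$ preserves the set of oriented edges of $H_r$ and makes all mixed turns legal, one sees $f(E)$ is already reduced, so no edges of $H_r$ are lost in passing to $f_\#(E)$. Thus the count of $H_r$-edges in $f_\#(E)$ is exactly given by $M_r$, and the displayed inequality is valid.

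The main obstacle is essentially bookkeeping rather than conceptual: one must be careful that ``discarding the lower-stratum contribution'' is legitimate, which it is precisely because we only need a one-sided inequality $\ell(f_\#(E))\geq\lambda_r\ell(E)$ and all discarded terms are lengths of genuine subpaths, hence non-negative. I do not expect to need any property of $f$ beyond the three relative train track axioms and irreducibility of $M_r$ (to invoke Perron--Frobenius and get a strictly positive eigenvector). So the proof is short: choose the PF eigenvector on $H_r$, set the remaining edge lengths to be any positive constants, and read off the inequality from the transition matrix together with (RTT-i) and (RTT-iii).
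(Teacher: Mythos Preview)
Your proof is correct and follows essentially the same approach as the paper: assign the Perron--Frobenius eigenvector coordinates to the edges of $H_r$, give all other edges length $1$, and use (RTT-iii) (so that $f_\#(E)=f(E)$ is $r$-legal) together with the transition matrix to obtain the inequality by discarding the nonnegative contribution from $G_{r-1}$. The only difference is cosmetic---the paper normalizes the eigenvector to have entries summing to $1$, and your exposition is slightly more explicit about why no $H_r$-edges are lost in tightening.
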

\begin{proof}
  Let $M_r$ be the transition matrix for the exponentially growing
  stratum, $H_r$ and let $\mathbf{v}$ be a left eigenvector for the PF
  eigenvalue $\lambda_r$ with components $(\mathbf{v})_i$.  Normalize
  $\mathbf{v}$ so that $\sum (\mathbf{v})_i=1$.  For $E_i\in H_r$
  define $\ell(E_i)=(\mathbf{v})_i$.  If $E\notin H_r$ define
  $\ell(E)=1$.  We now check the condition on the growth of edges in
  the EG stratum $H_r$.

  If $E$ is an edge in $H_r$, (RTT-iii) implies that $f(E)$ is
  $r$-legal.  Now write $f_\#(E)=f(E)$ as an edge path,
  $f_\#(E)=E_1E_2\ldots E_j$, and we have
  \begin{equation*}
    \ell(f(E))=\ell(f_\#(E))
    =\sum_{i=1}^j\ell(E_i)
    \geq \sum_{i=1}^j\ell(E_i\cap H_r)
    =\lambda_r\ell(E)
  \end{equation*}
  completing the proof of the lemma.
\end{proof}

We define the $r$-length $\ell_r$ of a path or circuit in $G$ by
ignoring the edges in other strata.  Explicitly,
$\ell_r(\sigma)=\ell(\sigma\cap H_r)$, where $\sigma\cap H_r$ is
considered as a disjoint union of sub-paths of $\sigma$.  Note that
the definition of $\ell$ and the proof of the previous lemma show that
$\ell_r(f_\#(E_i))=\lambda_r\ell(E_i)$.

\begin{lemma}
  \label{rlengthgrows}
  If $\sigma$ is an $r$-legal reduced edge path in $G$ and $\ell$ is
  the metric defined in Lemma \ref{lem:taulengthfn}, then
  $\ell_r(f_\#\sigma)=\lambda_r\ell_r(\sigma)$.
\end{lemma}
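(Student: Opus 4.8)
The plan is to reduce the statement about an arbitrary $r$-legal reduced edge path to the edge-by-edge estimate already established in Lemma \ref{lem:taulengthfn}. First I would write $\sigma$ as its edge path $\sigma = E_1 E_2 \cdots E_k$ and split this into maximal subpaths lying alternately in $H_r$ and in $G \setminus H_r$; the $r$-length $\ell_r(\sigma)$ is by definition the $\ell$-length of the union of the $H_r$-subpaths, so it suffices to understand how $f_\#$ acts on the edges of $H_r$ and how the images fit together. For each edge $E_i \in H_r$, property (RTT-iii) tells us $f(E_i) = f_\#(E_i)$ is $r$-legal, and the computation following Lemma \ref{lem:taulengthfn} gives $\ell_r(f_\#(E_i)) = \lambda_r \ell(E_i)$. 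Summing over the edges of $\sigma$ that lie in $H_r$ yields $\sum_{E_i \in H_r} \ell_r(f_\#(E_i)) = \lambda_r \ell_r(\sigma)$, so the content of the lemma is precisely that no cancellation among or within the $f_\#(E_i)$ destroys any $H_r$-edges, and that no \emph{new} $H_r$-edges are created when the pieces are concatenated and tightened.

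The key step, then, is to control the tightening. I would argue that the illegal turns potentially created when we concatenate $f(E_i)$ with $f(E_{i+1})$ (or with the image of an intervening connecting path in $G_{r-1}$) never involve edges of $H_r$. There are two cases. If $E_i, E_{i+1}$ are consecutive edges of $\sigma$ both in $H_r$, the turn $\{\overline{E_i}, E_{i+1}\}$ is legal in $H_r$ (since $\sigma$ is $r$-legal), and (RTT-i) says $Df$ preserves the set of oriented edges of $H_r$ and maps legal turns in $H_r$ to legal turns in $H_r$; hence the turn at the junction of $f(E_i)$ and $f(E_{i+1})$ is $r$-legal, so no cancellation occurs there at all. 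If instead a maximal $H_r$-subpath of $\sigma$ is followed by a subpath $\mu \subset G_{r-1}$, then by (RTT-i) all mixed turns are legal, so the turns where $f(E_i)$ meets $f_\#(\mu)$ meets $f(E_{i+1})$ are again legal; moreover $f_\#(\mu) \subset G_{r-1}$ contains no edges of $H_r$, and the endpoints of $\mu$ lie in $H_r \cap G_{r-1}$ so by (RTT-ii) $f_\#(\mu)$ is a nontrivial connecting path and nothing of $H_r$ is lost. In all cases the tightening needed to form $f_\#(\sigma)$ from the concatenation $f(E_1) \cdots f(E_k)$ is trivial on the $H_r$-edges, so $\sigma \cap H_r$ and $f_\#(\sigma) \cap H_r$ are related edge-by-edge by $f_\#$, giving $\ell_r(f_\#\sigma) = \sum_{E_i \in H_r} \ell_r(f_\#(E_i)) = \lambda_r \ell_r(\sigma)$.

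The main obstacle I anticipate is bookkeeping around the boundary between an $H_r$-subpath and a $G_{r-1}$-subpath: one must be careful that the definition of $r$-legal (``$\sigma \subset G_r$ does not contain any illegal turns in $H_r$'') together with the relative train track axioms genuinely prevents $H_r$-edges at the ends of such subpaths from cancelling into the image of the connecting path. The clean way to phrase this, which I would use, is to observe that $f_\#(\sigma)$ has a splitting refining the decomposition of $\sigma$ into its maximal $H_r$-subpaths and maximal $G_{r-1}$-subpaths — this follows from (RTT-i), (RTT-ii), (RTT-iii) exactly as in the standard relative train track theory — and that the $H_r$-pieces of this splitting of $f_\#(\sigma)$ are precisely the paths $f_\#(\mu_j)$ for $\mu_j$ the $H_r$-subpaths of $\sigma$, each of which is $r$-legal and satisfies $\ell_r(f_\#(\mu_j)) = \lambda_r \ell_r(\mu_j)$ by additivity of $\ell_r$ over edges and the per-edge identity. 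Summing over $j$ completes the proof.
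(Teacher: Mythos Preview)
Your proof is correct and follows essentially the same route as the paper's. The paper decomposes $\sigma$ into maximal subpaths $a_j\subset H_r$ and $b_j\subset G_{r-1}$, invokes \cite[Lemma~5.8]{BH-TT} to get the splitting $f_\#(\sigma)=f(a_1)\cdot f_\#(b_1)\cdot f(a_2)\cdots$, and then sums $\ell_r$ over the pieces; you take the same decomposition but re-derive the splitting directly from (RTT-i)--(RTT-iii) rather than citing it, which is fine and arguably more self-contained. One small notational slip: you write ``$G\setminus H_r$'' where ``$G_{r-1}$'' is meant (recall an $r$-legal path lies in $G_r$ by definition), but this does not affect the argument.
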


\begin{proof}
  We write $\sigma=a_1b_1a_2\cdots b_j$ as a decomposition into
  maximal subpaths where $a_j\subset H_r$ and $b_j\subset G_{r-1}$ as
  in Lemma 5.8 of \cite{BH-TT}.  Applying the lemma, we conclude that
  $f_\#(\sigma)=f(a_1)\cdot f_\#(b_1)\cdot f(a_2)\cdot\ldots\cdot
  f_\#(b_j)$.  Thus,
  \begin{equation*}
    \ell_r(f_\#\sigma)
    =\sum_i\ell_r(f(a_i))+\sum_i\ell_r(f_\#(b_i))
    =\sum_i\ell_r(f(a_i))=\sum_i\lambda_r\ell_r(a_i)
    =\lambda_r\ell_r(\sigma)  \qedhere
  \end{equation*}
\end{proof}

\begin{thm}[\cite{Wade-Thesis}]
  \label{taulength}
  Let $\phi\in \Out(F_n)$ with $f\colon G\to G$ a RTT representative.
  For each EG stratum $H_r$ of $f$, let $\lambda_r$ be the associated
  PF eigenvalue. Then
  $\tau(\phi)=\max\{0,\log\lambda_r\mid H_r\text{ is an EG
    stratum}\}$.
\end{thm}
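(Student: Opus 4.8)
The plan is to prove the two inequalities $\tau(\phi) \leq \max\{0, \log\lambda_r\}$ and $\tau(\phi) \geq \max\{0, \log\lambda_r\}$ separately, using the metric $\ell$ from Lemma \ref{lem:taulengthfn} suitably modified. Set $\lambda = \max\{\lambda_r \mid H_r \text{ EG}\}$ (taking $\lambda = 1$ if there are no EG strata, so that $\log\lambda = \max\{0, \log\lambda_r\}$ in all cases).

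For the upper bound $\tau(\phi) \leq \log\lambda$, I would build a metric on $G$ witnessing that $f$ is Lipschitz with constant $\lambda$ up to a bounded error. Concretely, I would rescale the eigenvector metrics stratum by stratum: choose the metric $\ell$ so that on each EG stratum $H_r$ the edge lengths are proportional to the PF left-eigenvector (as in Lemma \ref{lem:taulengthfn}), and on NEG and zero strata assign lengths small enough that $f_\#$ applied to any edge in a higher stratum, or any NEG/linear edge, does not increase $\ell$-length by more than a factor of $\lambda$. Since $\lambda_r \leq \lambda$ for each EG stratum, Lemma \ref{rlengthgrows} (applied to the $r$-legal pieces of $f(E)$) controls the EG contribution, and an inductive choice of scales handles the finitely many lower strata; one must check that iterating $f$ keeps the constant $\lambda$ because $f_\#^k(E)$ is a concatenation of $r$-legal pieces and bounded junk whose total length grows no faster than $\lambda^k$. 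This gives a $\lambda^k$-Lipschitz map from the marked graph $(G,\ell)$ to $(G, \ell)$ representing $\phi^k$ after rescaling back to volume one, hence $d(x, x\phi^k) \leq k\log\lambda + C$ for a fixed point $x$ and constant $C$, and dividing by $k$ and letting $k\to\infty$ yields $\tau(\phi) \leq \log\lambda$.

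For the lower bound $\tau(\phi) \geq \log\lambda$, I would fix an EG stratum $H_r$ with $\lambda_r = \lambda$ and exhibit a conjugacy class whose length grows like $\lambda^k$ in every metric on $G$ of fixed volume. Since $f\colon G\to G$ is a relative train track map, there is an $r$-legal circuit $\gamma$ crossing $H_r$ (for instance, a leaf segment of the attracting lamination $\Lambda^+$ associated to $H_r$, or a closed $r$-legal path obtained from an edge of $H_r$). By Lemma \ref{rlengthgrows}, the $r$-length satisfies $\ell_r(f_\#^k(\gamma)) = \lambda^k \ell_r(\gamma)$; since $\ell_r(f_\#^k(\gamma)) \le \ell(f_\#^k(\gamma))$ and $\ell(f_\#^k(\gamma))$ is a lower bound (up to the fixed constant relating $\ell$ to an arbitrary metric of the same volume) for the length of the conjugacy class $\phi^k(\gamma)$ in any point of Outer Space $x$, standard estimates for translation distance on $\CV_n$ (the ratio of lengths of a conjugacy class at two points bounds the distance below) give $d(x, x\phi^k) \geq \log\frac{\ell_{x\phi^k}(\gamma)}{\ell_x(\gamma)} = \log \ell_x(\phi^{-k}\gamma) - \log\ell_x(\gamma) \geq k\log\lambda - C'$. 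Dividing by $k$ yields $\tau(\phi) \geq \log\lambda$, and since $\tau(\phi) \geq 0$ always, we obtain the full inequality.

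The main obstacle is the upper bound: one needs the rescaled metric $\ell$ to keep $f_\#^k$ genuinely $\lambda^k$-Lipschitz uniformly in $k$, not just $k=1$, and the worry is that illegal turns and cancellation across stratum boundaries could, a priori, let length accumulate faster. The resolution is that in a relative train track map all the growth is governed by the EG transition matrices: by (RTT-i)-(RTT-iii), $f_\#^k(E)$ decomposes into $r$-legal subpaths in the various EG strata together with subpaths in strictly lower strata, and an induction on the filtration — choosing the scale of stratum $H_i$ after all higher strata are fixed, small enough to absorb the (bounded-complexity) images of lower edges under one application of $f$ — propagates the bound to all powers. Making this inductive bookkeeping precise, and relating the $\ell$-Lipschitz constant of the self-map to an honest Outer Space distance via a fixed comparison constant between $\ell$ and a volume-one metric, is where the real work lies; the lower bound is essentially immediate from Lemma \ref{rlengthgrows}.
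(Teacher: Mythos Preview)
Your lower bound matches the paper's: pick an $r$-legal circuit of height $r$, apply Lemma~\ref{rlengthgrows}, and use that the log-stretch of any circuit lower-bounds the Lipschitz distance.

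For the upper bound you are working harder than necessary, and the argument as written has a gap. You try to find one metric for which $f^k$ is $O(\lambda^k)$-Lipschitz uniformly in $k$, concluding $d(x,x\phi^k)\le k\log\lambda+C$. But when $\lambda=1$ (no EG strata) this is false: a linear NEG edge $E\mapsto Eu$ has $\ell(f^k(E))\asymp k$ in every metric, so no fixed $x$ has $d(x,x\phi^k)$ bounded. The paper avoids this entirely by noting that, since the $\Out(F_n)$-action on $\CV_n$ is by isometries, the triangle inequality already gives $\tau(\phi)\le d(x,x\phi)$ for \emph{every} $x$; thus one only needs to control a \emph{single} application of $f$. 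For each $\epsilon>0$ the paper builds a metric assigning stratum $H_r$ scale $(K/\epsilon)^r$ (times the PF eigenvector entry on EG strata), so that each stratum dwarfs those below it; a direct edge-by-edge check then shows $\ell(f(E))/\ell(E)\le\max\{1,\lambda_r\}+\epsilon$ for every edge $E$, hence $\tau(\phi)\le\log(\lambda+\epsilon)$, and letting $\epsilon\to 0$ finishes. No iterate bookkeeping is needed, and the ``main obstacle'' you identify simply does not arise.
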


\begin{proof}
  We first show that $\tau(\phi)\geq \log \lambda_r $ for every EG
  stratum $H_r$.  Let $x=(G,\ell,\text{id})$ where $\ell$ is the
  length function provided by Lemma \ref{lem:taulengthfn}.  Recall
  \cite{FM-Metric} that the logarithm of the factor by which a
  candidate loop is stretched gives a lower bound on the distance
  between two points in $\CV_n$.  Let $\sigma$ be an $r$-legal circuit
  contained in $G_r$ of height $r$ and let
  $C=\ell_r(\sigma)/\ell(\sigma)$. (RTT-iii) implies that
  $f^n_\#(\sigma)$ is $r$-legal for all $n$, so repeatedly applying
  Lemma \ref{rlengthgrows}, we have
  \begin{equation*}
    \frac{\ell(f^n_\#\sigma)}{\ell(\sigma)}
    \geq\frac{\ell_r(f^n_\#\sigma)}{\ell(\sigma)}
    =\frac{\ell_r(f_\#^n\sigma)}{\ell_r(f_\#^{n-1}\sigma)}\frac{\ell_r(f_\#^{n-1}\sigma)}{\ell_r(f_\#^{n-2}\sigma)}\cdots
    \frac{\ell_r(f_\#\sigma)}{\ell_r(\sigma)}\frac{\ell_r(\sigma)}{\ell(\sigma)}
    \geq\lambda_r^n C
  \end{equation*}
  Rearranging the inequality, taking logarithms and using the result
  of \cite{FM-Metric} yields
  \begin{equation*}
    \frac{d(x,x\cdot\phi^n)}{n}\geq\frac{\log(\lambda_r^n C)}{n}=\log\lambda_r+\frac{\log C}{n}
  \end{equation*}
  Taking the limit as $n\to\infty$, we have a lower bound on the
  translation distance of $\phi$.

  For the reverse inequality, fix $\epsilon>0$.  We must find a point
  in outer space which is moved by no more than
  $\epsilon+\max\{0,\log\lambda_r\}$.  The idea is to choose a point
  in the simplex of $\CV_n$ corresponding to a relative train track
  map for $\phi$ in which each stratum is much larger than the
  previous one.  This way, the metric will see the growth in every EG
  stratum.  Let $f\colon G\to G$ be a relative train track map as
  before, but assume that each NEG stratum consists of a single edge.
  This is justified, for example by choosing $f$ to be a CT
  \cite{FH-Recog}.  Let $K$ be the maximum edge length of the image of
  any edge of $G$.  Define a length function on $G$ as follows:
  \begin{equation*}
    \ell(E)=
    \begin{cases}
      (K/\epsilon)^r & \text{if }E\text{ is the unique edge in the NEG stratum }H_r\\
      (K/\epsilon)^r & \text{if }E\text{ is an edge in the zero stratum }H_r\\
      (K/\epsilon)^r\cdot v_i & \text{if }E_i\in H_r\text{ and
      }H_r\text{ is an EG stratum with }\vec{v}\text{ as above}
    \end{cases}
  \end{equation*}
  The logarithm of the maximum amount that any edge is stretched in a
  difference of markings map gives an upper bound on the Lipschitz
  distance between any two points.  So we just check the factor by
  which every edge is stretched.  Clearly the stretch factor for edges
  in fixed strata is 1.  If $E$ is the single edge in an NEG stratum,
  $H_i$, then
  \begin{equation*}
    \frac{\ell(f(E))}{\ell(E)}
    \leq \frac{\ell(E)+K\max\{\ell(E')\mid E'\in G_{i-1}\}}{\ell(E)}
    =\frac{(K/\epsilon)^i+K(K/\epsilon)^{i-1}}{(K/\epsilon)^i}
    =1+\epsilon
  \end{equation*}
  Similarly, if $E$ is an edge in the zero stratum, $H_i$, then
  \begin{equation*}
    \frac{\ell(f(E))}{\ell(E)}
    \leq\frac{K(K/\epsilon)^{i-1}}{(K/\epsilon)^i}
    = \epsilon
  \end{equation*}
  We will use the notation $\ell_r^{\downarrow}(\sigma)$ to denote the
  length of the intersection of $\sigma$ with $G_{r-1}$.  So for any
  path $\sigma$ contained in $G_r$, we have
  $\ell(\sigma)=\ell_r(\sigma)+\ell_r^{\downarrow}(\sigma)$.  Now, if
  $E_i$ is an edge in the EG stratum, $H_r$, with normalized PF
  eigenvector $\mathbf{v}$ then
  \begin{equation*}
    \frac{\ell(f(E_i))}{\ell(E_i)}
    =\frac{\ell_r(f(E_i))+\ell_r^{\downarrow}(f(E_i))}{\ell(E_i)}
    =\lambda_r+\frac{\ell_r^{\downarrow}(f(E_i))}{\ell(E_i)} \leq
    \lambda_r+\frac{K(K/\epsilon)^{r-1}}{(K/\epsilon)^r (\mathbf{v})_i}
    =\lambda_r+\frac{\epsilon}{(\mathbf{v})_i}
  \end{equation*}
  Since the vector $\mathbf{v}$ is determined by $f$, after replacing
  $\epsilon$ we have that
  $\frac{\ell(f(E))}{\ell(E)}\leq \max\{\lambda_r,1\}+\epsilon$ for
  every edge of $G$.  Thus, the distance $(G,\ell,\rho)$ is moved by
  $\phi$ is less than $\max\{\log(\lambda_r),0\}+\epsilon$ and the
  proof is complete.
\end{proof}

Now that we have computed the translation distance of an arbitrary
$\phi$ acting on outer space, we'll use this result to establish our
main result in a special case.

\section{The Exponential Case}
\label{sec:EG-case}

In this section, we'll analyze the case that the abelian subgroup
$H=\langle \phi_1,\ldots,\phi_k\rangle$ has enough exponential data so
that the entire group is seen by the so called lambda map.  More
precisely, given an attracting lamination $\Lambda^+$ for an outer
automorphism $\phi$, let
$PF_{\Lambda^+}\colon\Stab(\Lambda^+)\to \mathbb{Z}$ be the expansion
factor homomorphism defined by Corollary 3.3.1 of \cite{BFHI}.  In
\cite[Corollary 3.14]{FH-AB}, the authors prove that every abelian
subgroup of $\Out(F_n)$ has a finite index subgroup which is
rotationless (meaning that every element of the subgroup is
rotationless).  Distortion is unaffected by passing to a finite index
subgroup, so there is no loss in assuming that $H$ is rotationless.
Now let $\mathcal{L}(H)=\bigcup_{\phi\in H}\mathcal{L}(\phi)$ be the
set of attracting laminations for elements of $H$.  By \cite[Lemma
4.4]{FH-AB}, $\mathcal{L}(H)$ is a finite set of $H$-invariant
laminations.  Define $PF_H\colon H\to\mathbb{Z}^{\#\mathcal{L}(H)}$ by
taking the collection of expansion factor homomorphisms for attracting
laminations of the subgroup $H$.  In what follows, we will need to
interchange $PF_{\Lambda^+}$ for $PF_{\Lambda^-}$ and for that we will
need the following lemma.

\begin{lemma}\label{lem:paired-lamination}
  If $\Lambda^+\in\mathcal{L}(\phi)$ and
  $\Lambda^-\in\mathcal{L}(\phi^{-1})$ are paired laminations then
  $\frac{PF_{\Lambda^+}}{PF_{\Lambda^-}}$ is a constant map.  That is,
  $PF_{\Lambda^+}$ and $PF_{\Lambda^-}$ differ by a multiplicative
  constant, and so determine the same homomorphism.
\end{lemma}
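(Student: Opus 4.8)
The plan is to deduce the statement from a soft fact: two homomorphisms from a group to $\mathbb{R}$ that have the same kernel and discrete image are scalar multiples of one another. First I record the (known) input that the pairing $\Lambda^+\leftrightarrow\Lambda^-$ is canonical — it is determined by the free factor support, which depends only on the lamination and not on $\phi$ — hence is $\Out(F_n)$-equivariant, and in particular $\Stab(\Lambda^+)=\Stab(\Lambda^-)$ \cite{BFHI}. Write $S$ for this common stabilizer. Then $PF_{\Lambda^+}$ and $PF_{\Lambda^-}$ are both homomorphisms $S\to\mathbb{R}$ with discrete image; neither is trivial, since $\phi\in S$ with $PF_{\Lambda^+}(\phi)=\log\lambda_r\neq 0$ (and analogously for $\Lambda^-$), so each image is infinite cyclic and $S/\ker PF_{\Lambda^\pm}\cong\mathbb{Z}$.

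Next I compute the two kernels. By the description of the expansion factor homomorphisms recalled in Section~\ref{sec:preliminaries}, for $\psi\in S$ one has $PF_{\Lambda^+}(\psi)\neq 0$ precisely when $\Lambda^+\in\mathcal{L}(\psi)\cup\mathcal{L}(\psi^{-1})$, with the value positive exactly when $\Lambda^+\in\mathcal{L}(\psi)$; the same holds for $\Lambda^-$. The canonicity of the pairing — phrased as an involution on attracting laminations that preserves free factor support and swaps $\mathcal{L}(\cdot)$ with $\mathcal{L}(\cdot^{-1})$ — gives, for $\psi\in S$, that $\Lambda^+\in\mathcal{L}(\psi)$ iff $\Lambda^-\in\mathcal{L}(\psi^{-1})$ and $\Lambda^+\in\mathcal{L}(\psi^{-1})$ iff $\Lambda^-\in\mathcal{L}(\psi)$. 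Hence $\Lambda^+\in\mathcal{L}(\psi)\cup\mathcal{L}(\psi^{-1})$ if and only if $\Lambda^-\in\mathcal{L}(\psi)\cup\mathcal{L}(\psi^{-1})$, so $\ker PF_{\Lambda^+}=\ker PF_{\Lambda^-}=:K$. Now choose $\psi_0\in S$ whose class generates $S/K\cong\mathbb{Z}$. Every $\psi\in S$ can be written $\psi=\eta\,\psi_0^{m}$ with $\eta\in K$ and $m\in\mathbb{Z}$, so $PF_{\Lambda^+}(\psi)=m\,PF_{\Lambda^+}(\psi_0)$ and $PF_{\Lambda^-}(\psi)=m\,PF_{\Lambda^-}(\psi_0)$. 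Since $\psi_0\notin K$ we have $PF_{\Lambda^-}(\psi_0)\neq 0$, so $PF_{\Lambda^+}(\psi)/PF_{\Lambda^-}(\psi)$ equals the constant $PF_{\Lambda^+}(\psi_0)/PF_{\Lambda^-}(\psi_0)$ whenever $m\neq 0$, and both sides vanish when $m=0$. This proves $PF_{\Lambda^+}$ and $PF_{\Lambda^-}$ differ by a multiplicative constant (a negative one, as their signs are opposite on $S\setminus K$).

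The one nontrivial ingredient, and the place where care is needed, is the compatibility of the pairing with the operators $\mathcal{L}(\cdot)$ and $\Stab(\cdot)$ used above — equivalently, that the paired lamination of $\Lambda^+$ is intrinsic to $\Lambda^+$ rather than merely to the pair $(\phi,\Lambda^+)$. I would extract this from the structure theory of attracting laminations and their free factor supports in \cite{BFHI}. If one prefers to avoid quoting that: realize $\psi$ and $\psi^{-1}$ by relative train track maps and trace the EG stratum carrying $\Lambda^+$ through the folding picture to identify the stratum carrying $\Lambda^-$; this is more bookkeeping but self-contained. Everything else in the argument is formal.
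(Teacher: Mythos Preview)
Your proposal is correct and follows essentially the same route as the paper: reduce to showing $\ker PF_{\Lambda^+}=\ker PF_{\Lambda^-}$, then use that the pairing $\Lambda^+\leftrightarrow\Lambda^-$ is intrinsic (depends only on the lamination, via its free factor support) to transfer membership in $\mathcal{L}(\psi)\cup\mathcal{L}(\psi^{-1})$ between $\Lambda^+$ and $\Lambda^-$. Two small remarks: the paper pins down the ``nontrivial ingredient'' you flag by citing Corollary~1.3(1),(2) of Handel--Mosher \cite{HM-FSII} rather than \cite{BFHI}, so that is the reference you want; and you are right to make explicit the discreteness of the image when passing from ``same kernel'' to ``scalar multiple'' --- the paper leaves that step implicit.
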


\begin{proof}
  First, Corollary 1.3(2) of \cite{HM-FSII} gives that
  $\Stab(\Lambda^+)=\Stab(\Lambda^-)$ (which we will henceforth refer
  to as $\Stab(\Lambda^\pm)$), so the ratio in the statement is always
  well defined.  Now $PF_{\Lambda^+}$ and $PF_{\Lambda^-}$ each
  determine a homomorphism from $\Stab(\Lambda^{\pm})$ to $\mathbb{R}$
  and it suffices to show that these homomorphisms have the same
  kernel.  Suppose $\psi\notin\ker PF_{\Lambda^+}$ so that by
  \cite[Corollary 3.3.1]{BFHI} either $\Lambda^+\in \mathcal{L}(\psi)$
  or $\Lambda^+\in\mathcal{L}(\psi^{-1})$.  After replacing $\psi$ by
  $\psi^{-1}$ if necessary, we may assume
  $\Lambda^+\in\mathcal{L}(\psi)$.  Now $\psi$ has a paired lamination
  $\Lambda_\psi^-\in\mathcal{L}(\psi^{-1})$ which a priori could be
  different from $\Lambda^-$.  But Corollary 1.3(1) of \cite{HM-FSII}
  says that in fact $\Lambda_\psi^-=\Lambda^-$ and therefore that
  $\Lambda^-\in\mathcal{L}(\psi^{-1})$.  A final application of
  \cite[Corollary 3.3.1]{BFHI} gives that
  $\psi\notin\ker PF_{\Lambda^-}$.  This concludes the proof.
\end{proof}

\begin{thm}
  \label{thm:EG-case}
  If $PF_H$ is injective, then $H$ is undistorted in $\Out(F_n)$.
\end{thm}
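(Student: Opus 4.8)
The plan is to exhibit, for a suitable finite generating set of $\Out(F_n)$, a lower bound on word length of elements $\phi \in H$ in terms of $\|PF_H(\phi)\|$, and to combine this with the (easy) upper bound coming from the fact that $H$ is finitely generated. Concretely, since $PF_H$ is a homomorphism $H \to \mathbb{Z}^N$ (where $N = \#\mathcal{L}(H)$), the word length $|\phi|_H$ in $H$ is comparable to $\|PF_H(\phi)\|_1 = \sum_{\Lambda^+ \in \mathcal{L}(H)} |PF_{\Lambda^+}(\phi)|$ \emph{up to the kernel of $PF_H$}; injectivity of $PF_H$ means $H$ embeds in $\mathbb{Z}^N$, so $|\phi|_H \asymp \|PF_H(\phi)\|$. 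It therefore suffices to show there is a constant $A > 0$ with $\|PF_H(\phi)\| \le A\, |\phi|_{\Out(F_n)}$ for all $\phi \in H$; equivalently, that each expansion factor homomorphism $PF_{\Lambda^+}$, restricted to $H$, is Lipschitz for the word metric on $\Out(F_n)$.

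The bridge to geometry is Theorem \ref{taulength}: for any $\phi \in \Out(F_n)$ we have $\tau(\phi) = \max\{0, \log \lambda_r \mid H_r \text{ EG}\}$, and by the interpretation of $PF_{\Lambda^+}$ recalled in the preliminaries, $|PF_{\Lambda^+}(\phi)| = \log \lambda_r$ when $\Lambda^+ \in \mathcal{L}(\phi)$ (using Lemma \ref{lem:paired-lamination} to pass freely between $\Lambda^+$ and $\Lambda^-$ so that we may always arrange the relevant lamination to be attracting for $\phi$ rather than $\phi^{-1}$). Hence for every single lamination $\Lambda^+ \in \mathcal{L}(H)$ and every $\phi \in \Stab(\Lambda^{\pm})$ we get $|PF_{\Lambda^+}(\phi)| \le \tau(\phi)$. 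Now $\tau$ is manifestly Lipschitz in the word metric on $\Out(F_n)$: fix a basepoint $x_0 \in \CV_n$ and a finite generating set $S$; then $d(x_0, x_0\cdot \psi) \le L$ for all $\psi \in S^{\pm 1}$ where $L = \max_{\psi \in S^{\pm1}} d(x_0, x_0 \cdot \psi)$, so $d(x_0, x_0 \cdot \phi) \le L\, |\phi|_{\Out(F_n)}$ by the triangle inequality along a geodesic word, and $\tau(\phi) = \lim_n d(x_0, x_0\cdot\phi^n)/n \le \liminf_n L\,|\phi^n|_{\Out(F_n)}/n \le L\, |\phi|_{\Out(F_n)}$. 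Combining: $|PF_{\Lambda^+}(\phi)| \le L\, |\phi|_{\Out(F_n)}$ for each of the $N$ laminations, so $\|PF_H(\phi)\|_1 \le NL\, |\phi|_{\Out(F_n)}$, which is the bound we wanted with $A = NL$.

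Putting the two inequalities together yields $|\phi|_H \asymp \|PF_H(\phi)\| \le A\,|\phi|_{\Out(F_n)} \le A\,|\phi|_H$, and the lower bound on $|\phi|_{\Out(F_n)}$ in terms of $|\phi|_H$ is automatic since $H \hookrightarrow \Out(F_n)$ is $1$-Lipschitz (word length cannot increase when restricting to a subgroup generating set). This gives the quasi-isometric embedding. The one point requiring a little care — and the main obstacle — is the bookkeeping for laminations $\Lambda^+ \in \mathcal{L}(H)$ that are attracting for some $\psi \in H$ but for which a given $\phi \in H$ has $\Lambda^+ \notin \mathcal{L}(\phi) \cup \mathcal{L}(\phi^{-1})$: there $PF_{\Lambda^+}(\phi) = 0$, so the term contributes nothing to $\|PF_H(\phi)\|$ and the inequality $|PF_{\Lambda^+}(\phi)| \le \tau(\phi)$ holds trivially; but one must confirm via \cite{HM-FSII} (as in Lemma \ref{lem:paired-lamination}) that $H$ genuinely stabilizes each $\Lambda^{\pm} \in \mathcal{L}(H)$, so that $PF_{\Lambda^+}$ is defined on all of $H$ and $PF_H$ really is a homomorphism on the whole group — which is exactly what \cite[Lemma 4.4]{FH-AB} provides, as noted just before the theorem statement. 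Modulo that, the argument is a direct application of Theorem \ref{taulength}.
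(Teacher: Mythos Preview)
Your argument is correct and follows essentially the same strategy as the paper: bound $|PF_{\Lambda^+}(\phi)|$ by the translation length via Theorem~\ref{taulength}, bound translation length linearly by word length in $\Out(F_n)$, and then invoke the fact that an injective homomorphism between finitely generated abelian groups is a quasi-isometric embedding. The paper organizes the computation differently---it passes to a finite index subgroup with a basis $\phi_1,\dots,\phi_k$ diagonalizing $PF_H$ and then argues orthant by orthant to get $\tau(\psi)\ge K_1\max_i p_i$ explicitly---whereas you bound each coordinate $|PF_{\Lambda^+}(\phi)|$ directly, which is a mild streamlining.

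Two small points of care. First, your displayed inequality $|PF_{\Lambda^+}(\phi)|\le\tau(\phi)$ is literally correct only when $PF_{\Lambda^+}(\phi)\ge 0$; when $PF_{\Lambda^+}(\phi)<0$ one has $\Lambda^+\in\mathcal{L}(\phi^{-1})$ and the clean statement is $|PF_{\Lambda^+}(\phi)|\le\tau(\phi^{-1})\le L\,|\phi^{-1}|_{\Out(F_n)}=L\,|\phi|_{\Out(F_n)}$ for a symmetric generating set. This route actually bypasses Lemma~\ref{lem:paired-lamination} entirely; if you instead use that lemma to swap to $\Lambda^-$, you pick up the multiplicative constant $c_\Lambda$ relating $PF_{\Lambda^+}$ and $PF_{\Lambda^-}$, which is harmless but should be acknowledged. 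Second, your closing sentence is garbled: the inequality $|\phi|_{\Out(F_n)}\le C\,|\phi|_H$ (the easy direction) holds with $C=\max_{h\in S_H}|h|_{\Out(F_n)}$, not with $C=1$, and ``word length cannot increase when restricting to a subgroup generating set'' says the opposite of what you want. The chain $|\phi|_H\asymp\|PF_H(\phi)\|\le A\,|\phi|_{\Out(F_n)}\le AC\,|\phi|_H$ is what you are after.
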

\begin{proof}
  Let $k$ be the rank of $H$ and start by choosing laminations
  $\Lambda_1,\ldots,\Lambda_k\in\mathcal{L}(H)$ so the restriction of
  the function $PF_H$ to the coordinates determined by
  $\Lambda_1,\ldots,\Lambda_k$ is still injective.  First note that
  $\{\Lambda_1,\ldots,\Lambda_k\}$ cannot contain an
  attracting-repelling lamination pair by Lemma
  \ref{lem:paired-lamination}.

  Next, pass to a finite index subgroup of $H$ and choose generators
  $\phi_i$ so that after reordering the $\Lambda_i$'s if necessary,
  each generator satisfies
  $PF_H(\phi_i)=(0,\ldots,0,PF_{\Lambda_i}(\phi_i),0,\ldots,0)$.  Let
  $*\in\CV_n$ be arbitrary and let
  $\psi=\phi_1^{p_1}\cdots\phi_k^{p_k}\in H$.  We complete the proof
  one orthant at a time by replacing some of the $\phi_i$'s by their
  inverses so that all the $p_i$'s are non-negative.  Next, after
  replacing some of the $\Lambda_i$'s by their paired laminations
  (again using Lemma \ref{lem:paired-lamination}), we may assume that
  $PF_H(\psi)$ has all coordinates nonnegative.

  By Theorem \ref{taulength}, the translation distance of $\psi$ is the
  maximum of the Perron-Frobenius eigenvalues associated to the EG
  strata of a relative train track representative $f$ of $\psi$.
  Some, but not necessarily all, of $\Lambda_1,\ldots,\Lambda_k$ are
  attracting laminations for $\psi$.  Those $\Lambda_i$'s which are in
  $\mathcal{L}(\psi)$ are associated to EG strata of $f$.  For such a
  stratum, the logarithm of the PF eigenvalue is
  $PF_{\Lambda_i}(\psi)$ and the fact that $PF_{\Lambda_i}$ is a
  homomorphism implies
  \begin{equation*}
    PF_{\Lambda_i}(\psi)
    =PF_{\Lambda_i}(\phi_1^{p_1}\cdots\phi_k^{p_k})
    =p_1 PF_{\Lambda_j}(\phi_1)+\ldots+p_k PF_{\Lambda_j}(\phi_k)
    =p_i PF_{\Lambda_i}(\phi_i)
  \end{equation*}

  Thus, the translation distance of $\psi$ acting on outer space is
  \begin{align*}
    \tau(\psi)
    &=\max\{\log\lambda\mid\lambda\text{ is PF eigenvalue associated to an EG stratum of }\psi\}\\
    &\geq\max\{PF_{\Lambda_i}(\psi)\mid \Lambda_i\text{ is in }\mathcal{L}(\psi)\text{ and }1\leq i\leq k\}\\
    &=\max\{p_iPF_{\Lambda_i}(\phi_i)\mid 1\leq i\leq k\}
  \end{align*}

  In the last equality, the maximum is taken over a larger set, but
  the only values added to the set were 0.

  Let $S$ be a symmetric (i.e., $S^{-1}=S$) generating set for
  $\Out(F_n)$ and let $D_1=\max_{s\in S}d(*,*\cdot s)$.  If we write
  $\psi$ in terms of the generators $\psi=s_1s_2\cdots s_l$, then
  \begin{align*}
    d(*,*\cdot\psi)
    &\leq d(*,*\cdot s_l)+d(*\cdot s_l,*\cdot s_{l-1}s_l)+\ldots+d(*\cdot(s_2\ldots s_l),*\cdot (s_1\ldots s_l))\\
    &= d(*,*\cdot s_l)+d(*,*\cdot s_{l-1}+\ldots+d(*,*\cdot s_1)
      \leq D_1|\psi|_{\Out(F_n)}
  \end{align*}

  Let
  $K_1=\min\{PF_{\Lambda_i^\pm}(\phi_j^\pm)\mid 1\leq i,j\leq k\}$.
  Rearranging this and combining these inequalities, we have
  \begin{equation*}
    |\psi|_{\Out(F_n)}
    \geq\frac{1}{D_1}d(*,*\cdot \psi)
    \geq\frac{1}{D_1}\tau(\psi)
    \geq\frac{1}{D_1}\max\{p_iPF_{\Lambda_i}(\phi_i)\mid 1\leq i\leq k\}
    \geq\frac{K_1}{D_1}\max\{p_i\}
  \end{equation*}
  We have thus proved that the image of $H$ under the injective
  homomorphism $PF_H$ is undistorted in $\mathbb{Z}^k$.  To conclude
  the proof, recall that any injective homomorphism between abelian
  groups is a quasi-isometric embedding.
\end{proof}

Now that we have established our result in the exponential setting, we
move on to the polynomial case.  First we prove a general result about
CTs representing elements of abelian subgroups.

\section{Abelian Subgroups are Virtually Finitely Filtered}
\label{sec:finite-graphs}
In this section, we prove an analog of \cite[Theorem 1.1]{BFHII} for
abelian subgroups.  In that paper, the authors prove that any
unipotent subgroup of $\Out(F_n)$ is contained in the subgroup
$\mathcal{Q}$ of homotopy equivalences respecting a fixed filtration
on a fixed graph $G$.  They call such a subgroup ``filtered''.  While
generic abelian subgroups of $\Out(F_n)$ are \emph{not} unipotent, we
prove that they are virtually filtered.  Namely, that such a subgroup
is virtually contained in the union of finitely many $\mathcal{Q}$'s.
First, we review the comparison homomorphisms introduced in
\cite{FH-AB}.

\subsection{Comparison Homomorphisms}
\label{sec:comp-homos}
Feighn and Handel defined certain homomorphisms to $\mathbb{Z}$ which
measure the growth of linear edges and quasi-exceptional families in a
CT representative.  Though they can be given a canonical description
in terms of principal lifts, we will only need their properties in
coordinates given by a CT.  Presently, we will define these
homomorphisms and recall some basic facts about them.  Complete
details on comparison homomorphisms can be found in \cite{FH-AB}.

Comparison homomorphisms are defined in terms of principal sets for
the subgroup $H$.  The exact definition of a principal set is not
important for us.  We only need to know that a \emph{principal set}
$\mathcal{X}$ for an abelian subgroup $H$ is a subset of
$\partial F_n$ which defines a lift $s\colon H\to\Aut(F_n)$ of $H$ to
the automorphism group.  Let $\mathcal{X}_1$ and $\mathcal{X}_2$ be
two principal sets for $H$ that define distinct lifts $s_1$ and $s_2$
to $\Aut(F_n)$.  Suppose further that $\mathcal{X}_1\cap\mathcal{X}_2$
contains the endpoints of an axis $A_c$.  Since $H$ is abelian,
$s_1\cdot s_2^{-1}\colon H\to \Aut(F_n)$ defined by
$s_1\cdot s_2^{-1}(\phi)=s_1(\phi)\cdot s_2(\phi)^{-1}$ is a
homomorphism.  It follows from \cite[Lemma 4.14]{FH-Recog} that for
any $\phi\in H$, $s_1(\phi)=s_2(\phi)i_c^k$ for some $k$, where
$i_c\colon \Aut(F_n)\to \Aut(F_n)$ denotes conjugation by $c$.
Therefore $s_1\cdot s_2^{-1}$ defines homomorphism into
$\langle i_c\rangle$, which we call the \emph{comparison homomorphism}
determined by $\mathcal{X}_1$ and $\mathcal{X}_2$.  Generally, we will
use the letter $\omega$ for comparison homomorphisms.

For a rotationless abelian subgroup $H$, there are only finitely many
comparison homomorphisms \cite[Lemma 4.3]{FH-AB}.  Let $K$ be the
number of distinct comparison homomorphisms and (as before) let $N$ be
the number of attracting laminations for $H$.  The map
$\Omega\colon H\to \mathbb{Z}^{N+K}$ defined as the product of the
comparison homomorphisms and expansion factor homomorphisms is
injective \cite[Lemma 4.6]{FH-AB}.  An element $\phi\in H$ is called
\emph{generic} if every coordinate of
$\Omega(\phi)\in\mathbb{Z}^{N+K}$ is non-zero.  If $\phi$ is generic
and $f\colon G\to G$ is a CT representing $\phi$, then there is a
correspondence between the comparison homomorphisms for $H$ and the
linear edges and quasi-exceptional families in $G$ described in the
introduction to \S 7 of \cite{FH-AB} which we briefly describe now.
There is a comparison homomorphism $\omega_{E_i}$ for each linear edge
$E_i$ in $G$. If $f(E_i)=E_i\cdot u^{d_i}$, then
$\omega_{E_i}(\phi)=d_i$.  There is also a comparison homomorphism for
each quasi-exceptional family, $E_iu^*\overline{E}_j$ which is denoted
by $\omega_{E_iu^*\overline{E}_j}$.  If $E_i$ is as before and
$f(E_j)=E_ju^{d_j}$, then $\omega_{E_iu^*\overline{E}_j}$ and
$\omega(\phi)=d_i-d_j$.  We illustrate this correspondence with an
example.

\begin{example}
  Let $G=R_3$ be the rose with three petals labeled $a,b,$ and $c$.
  For $i,j\in\mathbb{Z}$, define $g_{i,j}\colon G\to G$ as follows:
  \begin{equation*}
    \begin{aligned}[c]
      a&\mapsto a\\
      g_{i,j}\colon\,b&\mapsto ba^i\\
      c&\mapsto ca^j
    \end{aligned}
  \end{equation*}
  Each $g_{i,j}$ determines an outer automorphism of $F_3$ which we
  denote by $\phi_{i,j}$.  The automorphisms $\phi_{i,j}$ all lie in
  the rank two abelian subgroup
  $H=\langle \phi_{0,1},\phi_{1,0}\rangle$.  The subgroup $H$ has
  three comparison homomorphisms which are easily understood in the
  coordinates of a CT for a generic element of $H$.  The element
  $\phi_{2,1}$ is generic in $H$, and $g_{2,1}$ is a CT representing
  it.  Two of the comparison homomorphisms manifest as $\omega_b$ and
  $\omega_c$ where $\omega_b(\phi_{i,j})=i$ and
  $\omega_c(\phi_{i,j})=j$.  The third homomorphism is denoted by
  $\omega_{ba^*\overline{c}}$ and it measures how a path of the form
  $ba^*\overline{c}$ changes when $g_{i,j}$ is applied.  Since
  $g_{i,j}(ba^*\overline{c})=ba^{*+i-j}\overline{c}$, we have
  $\omega_{ba^*\overline{c}}(\phi_{i,j})=i-j$.

\end{example}

In the sequel, we will rely heavily on this correspondence between the
comparison homomorphisms of $H$ and the linear edges and
quasi-exceptional families in a CT for a generic element of $H$.  We
now prove the main result of this section.

\begin{prop}
  \label{prop:finite-graphs}
  For any abelian subgroup $H$ of $\Out(F_n)$, there exists a finite
  index subgroup $H'$ such that every $\phi\in H'$ can be realized as
  a CT on one of finitely many marked graphs.
\end{prop}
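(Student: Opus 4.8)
The plan is to start with a rotationless abelian subgroup $H$ (which we may assume by passing to the finite-index subgroup guaranteed by \cite[Corollary 3.14]{FH-AB}), and exploit the injective map $\Omega\colon H\to\mathbb{Z}^{N+K}$ given by the expansion factor and comparison homomorphisms. Pick a generic element $\psi\in H$ — one for which every coordinate of $\Omega(\psi)$ is nonzero — and let $f\colon G\to G$ be a CT representing $\psi$, which exists by \cite[Theorem 4.28]{FH-Recog}. The goal is to show that, after passing to a further finite-index subgroup, every $\phi\in H$ is represented by a CT on this \emph{same} graph $G$ (possibly with a different homotopy equivalence). The idea is that $G$ already records all the ``directions'' in $H$: the EG strata correspond to the finitely many laminations in $\mathcal{L}(H)$, and the linear edges / quasi-exceptional families correspond to the finitely many comparison homomorphisms.

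\medskip

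Here is the sequence of steps I would carry out. First, I would establish that the filtration and stratification of $G$ are intrinsic to $H$ rather than to $\psi$: the EG strata of $G$ are in bijection with $\mathcal{L}(H)$ (using that $\mathcal{L}(\psi)$ together with paired laminations exhausts $\mathcal{L}(H)$ for generic $\psi$, via \cite[Lemma 4.4]{FH-AB} and the pairing discussion), and the NEG linear edges of $G$, together with their axes, are in bijection with the comparison homomorphisms of $H$ as spelled out in the paragraph preceding the proposition. Second, for an \emph{arbitrary} $\phi\in H$ I would take any CT $f_\phi\colon G_\phi\to G_\phi$ representing $\phi$ and argue that $G_\phi$ and $G$ are related by a homotopy equivalence compatible with the filtrations: the point is that $\phi$ and $\psi$ share the same attracting/repelling laminations (up to the subset seen by $\phi$), the same axes for linear edges, and the same Nielsen path structure, because all of this data is detected by $\Omega$ and is $H$-invariant. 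Third — and this is where the finite-index passage really enters — I would need to rule out the rotation/permutation phenomena: a priori different elements could permute strata or linear edges, or introduce new exceptional paths. Passing to the kernel of the (finite) action of $H$ on the set of laminations and on the set of comparison homomorphisms, which is finite index, kills this. One then argues that on this subgroup $H'$, every CT is supported on the graph $G$ up to finitely many choices coming from the combinatorial structure (orderings of strata of equal ``type'', choices of zero strata enveloped by a given EG stratum), yielding finitely many graphs total.

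\medskip

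The main obstacle I expect is Step 2: showing that a CT for an arbitrary $\phi\in H$ can actually be taken on $G$ itself, not merely on a graph combinatorially resembling $G$. The subtlety is that the CT algorithm of \cite{FH-Recog} is not canonical — it makes choices of folds — so two elements of $H$ need not come with CTs on literally the same marked graph even when their laminations and linear data agree. To handle this I would invoke the compatibility of CT structure with the common invariant data: the free factor system filtration of $\mathbb{Z}$-invariants, the attracting laminations, and the axes are all preserved by $H$, and one can build a single filtered graph realizing all of them simultaneously (this is the analog of the simultaneous Thurston decomposition in the mapping class group setting, and is close in spirit to the Kolchin-type theorem of \cite{BFHII}, except that generic $H$ is not unipotent so one must work stratum by stratum). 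Then, for each $\phi\in H'$, the CT existence theorem applied relative to this fixed filtration produces a representative on a graph that differs from $G$ by at most the finitely many combinatorial choices catalogued above. A secondary, more bookkeeping-type obstacle is controlling the zero strata and the exceptional paths: these are determined by the EG strata enveloping them and by pairs of linear edges with common axis, respectively, so once the EG strata and linear edges are pinned down to finitely many configurations, the zero strata and exceptional paths are too, and the count of graphs remains finite.
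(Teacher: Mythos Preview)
Your proposal has a genuine gap at exactly the point you identify as the main obstacle, and your proposed resolution does not close it. The difficulty in Step~2 is not merely that the CT algorithm makes choices; it is that even when two commuting elements share all the invariant data you list (laminations, axes, free factor filtration), there is no general mechanism forcing their CTs onto the \emph{same marked graph}. Applying \cite[Theorem 4.28]{FH-Recog} relative to a fixed nested sequence of free factor systems constrains the filtration but not the marking: infinitely many marked graphs realize the same filtration, and the EG strata for different $\phi\in H$ can sit on genuinely different graphs even with identical lamination data. Your appeal to a Kolchin-type simultaneous realization is exactly the right instinct, but ``work stratum by stratum'' is not an argument---this is the hard content, and you have not supplied it.

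The paper's proof avoids this problem entirely by invoking the \emph{disintegration} machinery of \cite[\S6--7]{FH-AB}, which you do not mention. Given a generic $\phi\in H$ with CT $f\colon G\to G$, the disintegration $\mathcal{D}(\phi)$ is an abelian group of outer automorphisms each of which is, \emph{by construction}, represented by an explicit map $f_{\mathbf a}\colon G\to G$ on the same marked graph, and \cite[Lemma 6.18]{FH-AB} certifies that $f_{\mathbf a}$ is a CT when the tuple $\mathbf a$ is generic and admissible. The crucial input is \cite[Theorem 7.2]{FH-AB}: $\mathcal{D}(\phi)\cap H$ has finite index in $H$. The paper then argues sector by sector (one marked graph per open cone cut out by the coordinate hyperplanes of $\Omega$), reconstructing the tuple $\mathbf a$ for each generic $\gamma$ in the sector of $\phi$ via \cite[Corollary 6.20 and Lemma 7.5]{FH-AB} and checking positivity. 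Non-generic elements are handled by induction on rank, since each lies in the kernel of some coordinate of $\Omega$. So the finitely many graphs arise as one per sector plus the graphs coming inductively from the hyperplanes---not from ``finitely many combinatorial choices'' in a single CT construction.
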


Most of the proof consists of restating and combining results of
Feighn and Handel from \cite{FH-AB}.  We refer the reader to \S 6 of
their paper for the relevant notation and most of the relevant
results.
\begin{proof}
  First replace $H$ by a finite index rotationless subgroup
  \cite[Corollary 3.14]{FH-AB}.  The proof is by induction on the rank
  of $H$.  The base case follows directly from \cite[Lemma
  6.18]{FH-AB}.  Let $H=\langle \phi\rangle$ and let
  $f^\pm\colon G^\pm\to G^\pm$ be CT's for $\phi$ and $\phi^{-1}$
  which are both generic in $H$.  The definitions then guarantee that
  $\mathbf{i}=(i,i,\ldots,i)$ for $i>0$ is both generic and
  admissible.  Lemma 6.18 then says that
  $f^\pm_{\mathbf{i}}\colon G^\pm\to G^\pm$ is a CT representing
  $\phi^\pm_{\mathbf{i}}=\phi^{\pm i}$, so we are done.

  Assume now that the claim holds for all abelian subgroups of rank
  less than $k$, and let $H=\langle\phi_1,\ldots,\phi_k\rangle$.  The
  set of generic elements of $H$ is the complement of a finite
  \cite[Lemma 4.3]{FH-AB} collection of hyperplanes.  Every
  non-generic element, $\phi$, lies in a rank $(k-1)$ abelian subgroup
  of $H$: the kernel of the corresponding comparison homomorphism.  By
  induction and the fact that there are only finitely many
  hyperplanes, every non-generic element has a CT representative on
  one of finitely many marked graphs.  We now add a single marked
  graph for each sector defined by the complement of the hyperplanes.

  Let $\phi$ be generic and let $f\colon G\to G$ be a CT
  representative.  Let $\mathcal{D}(\phi)$ be the disintegration of
  $\phi$ as defined in \cite{FH-AB} and recall that
  $\mathcal{D}(\phi)\cap H$ is finite index in $H$ \cite[Theorem
  7.2]{FH-AB}.  Let $\Gamma$ be the semigroup of generic elements of
  $\mathcal{D}(\phi)\cap H$ that lie in the same sector of $H$ as
  $\phi$ (i.e., for every $\gamma\in\Gamma$ and every coordinate
  $\omega$ of $\Omega$, the signs of $\omega(\gamma)$ and
  $\omega(\phi)$ agree).  The claim is that every element of $\Gamma$
  can be realized as a CT on the marked graph $G$ and we will show
  this by explicitly reconstructing the generic tuple $\mathbf{a}$
  such that $\gamma=[f_{\mathbf{a}}]$.  Fix $\gamma\in \Gamma$ and let
  $\phi_{\mathbf{a}_1},\ldots,\phi_{\mathbf{a}_k}$ be a generating set
  for $H$ with $\mathbf{a}_i$ generic \cite[Corollary 6.20]{FH-AB}.
  Write $\gamma$ as a word in the generators,
  $\gamma=\phi_{\mathbf{a}_1}^{j_1}\cdots\phi_{\mathbf{a}_k}^{j_k}$
  and define $\mathbf{a}=j_1\mathbf{a}_1+\ldots+j_k\mathbf{a}_k$.
  Since the admissibility condition is a set of homogeneous linear
  equations which must be preserved under taking linear combinations,
  as long as every coordinate of $\mathbf{a}$ is non-negative,
  $\mathbf{a}$ must be admissible.  To see that every coordinate of
  $\mathbf{a}$ is in fact positive, let $\omega$ be a coordinate of
  $\Omega^\phi$.  Using the fact that $\omega$ is a homomorphism to
  $\mathbb{Z}$ and repeatedly applying \cite[Lemma 7.5]{FH-AB} to the
  $\phi_{\mathbf{a}_i}$'s, we have
  \begin{align*}
    \omega(\gamma)
    &=j_1\omega(\phi_{\mathbf{a}_1})+j_2\omega(\phi_{\mathbf{a}_2})+\ldots+j_k\omega(\phi_{\mathbf{a}_k})\\
    &=j_1(\mathbf{a}_1)_s\omega(\phi)+j_2(\mathbf{a}_2)_s\omega(\phi)+\ldots+j_k(\mathbf{a}_k)_s\omega(\phi)\\
    &=(j_1\mathbf{a}_1+j_2\mathbf{a}_2+\ldots+j_k\mathbf{a}_k)_s\omega(\phi)\\
    &=(\mathbf{a})_s\omega(\phi)
  \end{align*}
  where $(\mathbf{a})_s$ denotes the $s$-th coordinate of the vector
  $\mathbf{a}$.  Since $\gamma$ and $\phi$ were assumed to be generic
  and to lie in the same sector, we conclude that every coordinate of
  $\mathbf{a}$ is positive.  The injectivity $\Omega^\phi$ \cite[Lemma
  7.4]{FH-AB} then implies that $\gamma=[f_{\mathbf{a}}]$.  That
  $\mathbf{a}$ is in fact generic follows from the fact, which is
  directly implied by the definitions, that if $\mathbf{a}$ is a
  generic tuple, then $\phi_{\mathbf{a}}$ is a generic element of $H$.
  Finally, we apply \cite[Lemma 6.18]{FH-AB} to conclude that
  $f_{\mathbf{a}}\colon G\to G$ is a CT.  Thus, every element of
  $\Gamma$ has a CT representative on the marked, filtered graph $G$.
  Repeating this argument in each of the finitely many sectors and
  passing to the intersection of all the finite index subgroups
  obtained this way yields a finite index subgroup $H'$ and finitely
  many marked graphs, so that every generic element of $H'$ can be
  realized as a CT on one of the marked graphs.  The non-generic
  elements were already dealt with using the inductive hypothesis, so
  the proof is complete.
\end{proof}

\section{The Polynomial Case}
\label{sec:PG-case}
In \cite{Ali-TL}, the author introduced a function that measures the
twisting of conjugacy classes about an axis in $F_n$ and used this
function to prove that cyclic subgroups of $\UPG$ are undistorted.  In
order to use the comparison homomorphisms in conjunction with this
twisting function, we need to establish a result about the possible
terms occuring in completely split circuits.  After establishing this
connection, we use it to prove (Theorem \ref{thm:PG-case}) the main
result under the assumption that $H$ has ``enough'' polynomial data.

In the last section, we saw the correspondence between comparison
homomorphisms and certain types of paths in a CT.  In order to use the
twisting function from \cite{Ali-TL}, our goal is to find circuits in
$G$ with single linear edges or quasi-exceptional families as
subpaths, and moreover to do so in such a way that we can control
cancellation at the ends of these subpaths under iteration of $f$.
This is the most technical section of the paper, and the one that most
heavily relies on the use of CTs.  The main result is Proposition
\ref{prop:split-circuit}.

\subsection{Completely Split Circuits}
\label{sec:split-circuits}
One of the main features of train track maps is that they allow one to
understand how cancellation occurs when tightening $f^k(\sigma)$ to
$f^k_{\#}(\sigma)$.  In previous incarnations of train track maps,
this cancellation was understood inductively based on the height of
the path $\sigma$.  One of the main advantages of completely split
train track maps is that the way cancellation can occur is now
understood directly, rather than inductively.

Given a CT $f\colon G\to G$ representing $\phi$, the set of allowed
terms in completely split paths would be finite were it not for the
following two situations: a linear edge $E\mapsto E u$ gives rise to
an infinite family of INPs of the form $Eu^*\overline{E}$, and two
linear edges with the same axis
$E_1\mapsto E_1u^{d_1},\,E_2\mapsto E_2u^{d_2}$ (with $d_1$ and $d_2$
having the same sign) give rise to an infinite family of exceptional
paths of the form $E_1 u^*\overline{E}_2$.  To see that these are the
only two subtleties, one only needs to know that there is at most one
INP of height $r$ for each EG stratum $H_r$.  This is precisely
\cite[Corollary 4.19]{FH-AB}.

To connect Feighn-Handel's comparison homomorphisms to
Alibegovi\'{c}'s twisting function, we would like to show that every
linear edge and exceptional family occurs as a term in the complete
splitting of some completely split circuit.  We will in fact show
something stronger:

\begin{prop}
  \label{prop:split-circuit}
  There is a completely split circuit $\sigma$ containing every
  allowable term in its complete splitting.  That is the complete
  splitting of $\sigma$ contains at least one instance of every
  \begin{itemize}
  \item edge in an irreducible stratum (fixed, NEG, or EG)
  \item maximal, taken connecting subpath in a zero stratum
  \item infinite family of INPs $Eu^*\overline{E}$
  \item infinite family of exceptional paths $E_1u^*\overline{E}_2$
  \end{itemize}
\end{prop}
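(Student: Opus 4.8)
The plan is to build the desired circuit in two stages: first produce a completely split \emph{path} (not yet a circuit) that visits every allowable term, then close it up to a circuit without destroying the complete splitting. Since there are only finitely many irreducible edges, finitely many maximal taken connecting subpaths in zero strata (by the (Zero Strata) property of a CT, each such subpath is a $r$-taken subpath of some $f_\#^k(E)$ with $E\in H_r$, and by (EG Nielsen Paths) and \cite[Corollary 4.19]{FH-AB} there are only finitely many of these), finitely many linear families $Eu^*\overline E$, and finitely many exceptional families $E_1u^*\overline E_2$, it suffices to realize each one individually as a subpath of a completely split path and then concatenate. The key observation is that a concatenation $\alpha\cdot\beta$ of completely split paths is again completely split \emph{provided} the turn at the concatenation point is legal and is not the ``interior'' turn of an INP or exceptional path; this is where a CT gives us enough room to maneuver, since the link of every vertex contains at least two gates (the graph has no valence-one vertices and, after passing to the core where necessary by the (Filtration) property, no valence-one or ``dead-end'' issues arise).

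The key steps, in order, are as follows. \textbf{Step 1: realizing each term as a subpath of a legal path.} For an edge $E$ in an irreducible stratum, $E$ itself is completely split; for a maximal taken connecting subpath $\tau$ in a zero stratum, $\tau$ appears inside $f_\#^k(E)$ for a suitable edge $E$ in the enveloping EG stratum $H_r$, and $f_\#^k(E)$ is completely split (iterate the (Completely Split) property); for a linear family, the INP $Eu\overline E$ is by definition completely split, and similarly for exceptional paths $E_1u^m\overline E_2$. So each required term sits inside some explicit completely split path $\gamma_t$. \textbf{Step 2: connecting the pieces.} Enumerate the required terms $t_1,\dots,t_M$ with associated completely split paths $\gamma_1,\dots,\gamma_M$. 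I will splice consecutive $\gamma_i$'s together by inserting between them a short legal ``bridge'' path $\delta_i$ in $G$ (using connectivity of $G$) chosen so that both turns at the endpoints of $\delta_i$ are legal and distinct from the interior turns of any INP/exceptional term. Each such bridge can itself be taken to be a legal concatenation of single edges in irreducible strata — and legal paths are completely split into their constituent edges. The concatenation $\gamma_1\cdot\delta_1\cdot\gamma_2\cdots\delta_{M-1}\cdot\gamma_M$ is then completely split, since each junction turn is legal and non-degenerate. \textbf{Step 3: closing up to a circuit.} Append one final legal bridge $\delta_M$ from the terminal endpoint of $\gamma_M$ back to the initial endpoint of $\gamma_1$, again arranging legality of the two closing turns; the result is a completely split circuit $\sigma$ containing every required term.

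The main obstacle is Step 2 (and its reprise in Step 3): verifying that the turns at the splice points can always be made legal and are never the forbidden interior turns of a Nielsen or exceptional subpath. This needs the structural input that at every vertex of $G$ there are at least two gates — which follows because $G$ is a core graph with no valence-one vertices and $f$ is a CT (if a vertex had a single gate, some iterate $f^k$ would be non-injective on directions there, contradicting that $f$ is a homotopy equivalence / the train track structure). Given two gates at the relevant vertex, I can always steer the bridge $\delta_i$ into and out of a vertex along edges in different gates from the adjacent terms, which simultaneously makes the turn legal and prevents it from coinciding with the rigidly-prescribed interior turn of an INP or exceptional path (whose edges and turns are fixed data of the CT). A secondary, purely bookkeeping obstacle is ensuring the bridges can be routed between the finitely many prescribed endpoints at all; this is immediate from connectedness of $G$, but one must check the endpoints of the $\gamma_t$'s are vertices, which holds because all our building blocks are edge paths between vertices (the (Vertices) and (Linear Edges) properties guarantee the endpoints of INPs and exceptional paths are vertices).
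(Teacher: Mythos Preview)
Your Step~2 contains a genuine gap: the ``bridge'' argument assumes that between any two prescribed endpoints you can find a legal edge path whose initial and terminal directions lie in prescribed gates, and this is false in general. The paper's own Example (the rose $R_2$ with $a\mapsto ab$, $b\mapsto bab$) already breaks it. There the gates at the unique vertex are $\{a\}$, $\{b\}$, $\{\overline a,\overline b\}$, and a direct check shows that any legal path beginning with $a$ or $b$ can only ever cross $a$'s and $b$'s---never $\overline a$ or $\overline b$. So there is \emph{no} legal bridge from (the terminal end of) a path ending in $a$ to (the initial end of) a path beginning with $\overline a$. Your appeal to ``at least two gates at every vertex'' only guarantees that \emph{some} legal continuation exists, not that you can legally steer into an arbitrary target gate; when the attracting lamination is orientable and every vertex has a unique arrival gate, legal paths are trapped on one side of the orientation and cannot turn around.

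This is precisely the difficulty the paper isolates and resolves. Its proof reformulates the problem as showing that a certain directed graph $\mathcal{CSP}(f)$ (vertices $=$ allowable terms, edges $=$ legal concatenations) is strongly connected, and proves this by induction on the core filtration. The delicate base case is exactly the orientable situation above: there the paper proves a separate lemma (your missing ingredient) that if the lamination is orientable and every vertex has a unique arrival gate, then an INP must exist and its initial edges are oriented consistently with the lamination---so the INP, not a legal bridge, provides the mechanism for turning around. Without this lemma (or an equivalent substitute), your concatenation scheme cannot produce a circuit containing both $E$ and $\overline E$ for EG edges in such strata. A secondary issue: your bridges are ``legal concatenations of single edges in irreducible strata,'' but a connecting path in $G$ may be forced through a zero stratum, whose edges are not themselves allowable terms; you would need to argue separately that such detours can be avoided or absorbed into taken connecting subpaths.
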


The proof of this proposition will require a careful study of
completely split paths.  With that aim, we define a directed graph
that encodes the complete splittings of such paths.  Given a CT
$f\colon G\to G$ representing $\phi$ define a di-graph
$\mathcal{CSP}(f)$ (or just $\mathcal{CSP}$ when $f$ is clear) whose
vertices are oriented allowed terms in completely split paths.  More
precisely, there are two vertices for each edge in an irreducible
stratum: one labeled by $E$ and one labeled by $\overline{E}$ (which
we will refer to at $\tau_E$ and $\tau_{\overline{E}}$).  There are
two vertices for each maximal taken connecting path in a zero stratum:
one for $\sigma$ and one for $\overline{\sigma}$ (which will be
referred to as $\tau_{\sigma}$ and $\tau_{\overline{\sigma}}$).
Similarly, there are two vertices for each family of exceptional
paths, two vertices for each INP of EG height, and \emph{one} vertex
for each infinite family of NEG Nielsen paths.  There is only one
vertex for each family of indivisible Nielsen path $\sigma$ whose
height is NEG because $\sigma$ and $\overline{\sigma}$ determine the
same initial direction.  There is an edge connecting two vertices
$\tau_\sigma$ and $\tau_{\sigma'}$ in $\mathcal{CSP}(f)$ if the path
$\sigma\sigma'$ is completely split with splitting given by
$\sigma\cdot \sigma'$.  This is equivalent to the turn
$(\overline{\sigma},\sigma')$ being legal by the uniqueness of
complete splittings \cite[Lemma 4.11]{FH-Recog}.

Any completely split path (resp.\ circuit) $\sigma$ with endpoints at
vertices in $G$ defines a directed edge path (resp.\ directed loop) in
$\mathcal{CSP}(f)$ given by reading off the terms in the complete
splitting of $\sigma$.  Conversely, a directed path or loop in
$\mathcal{CSP}(f)$ yields a not quite well defined path or circuit
$\sigma$ in $G$ which is necessarily completely split.  The only
ambiguity lies in how to define $\sigma$ when the path in
$\mathcal{CSP}(f)$ passes through a vertex labeled by a Nielsen path
of NEG height or a quasi-exceptional family.

\begin{example}
  \label{ex:iwip-rose}
  Consider the rose $R_2$ consisting of two edges $a$ and $b$ with the
  identity marking.  Let $f\colon R_2\to R_2$ be defined by
  $a\mapsto ab,\, b\mapsto bab$.  This is a CT representing a fully
  irreducible outer automorphism.  There is one indivisible Nielsen
  path $\sigma=ab\overline{a}\overline{b}$.  The graph
  $\mathcal{CSP}(f)$ is shown in Figure \ref{fig:CSPRose}.  The blue
  edges represent the fact that each of the paths
  $\overline{b}\cdot\overline{b},\, \overline{b}\cdot\overline{a},\,
  \overline{b}\cdot\sigma$, and $\overline{b}\cdot a$ is completely
  split.

\begin{figure}[h]
  \centering{ \def\svgwidth{.5\linewidth}
    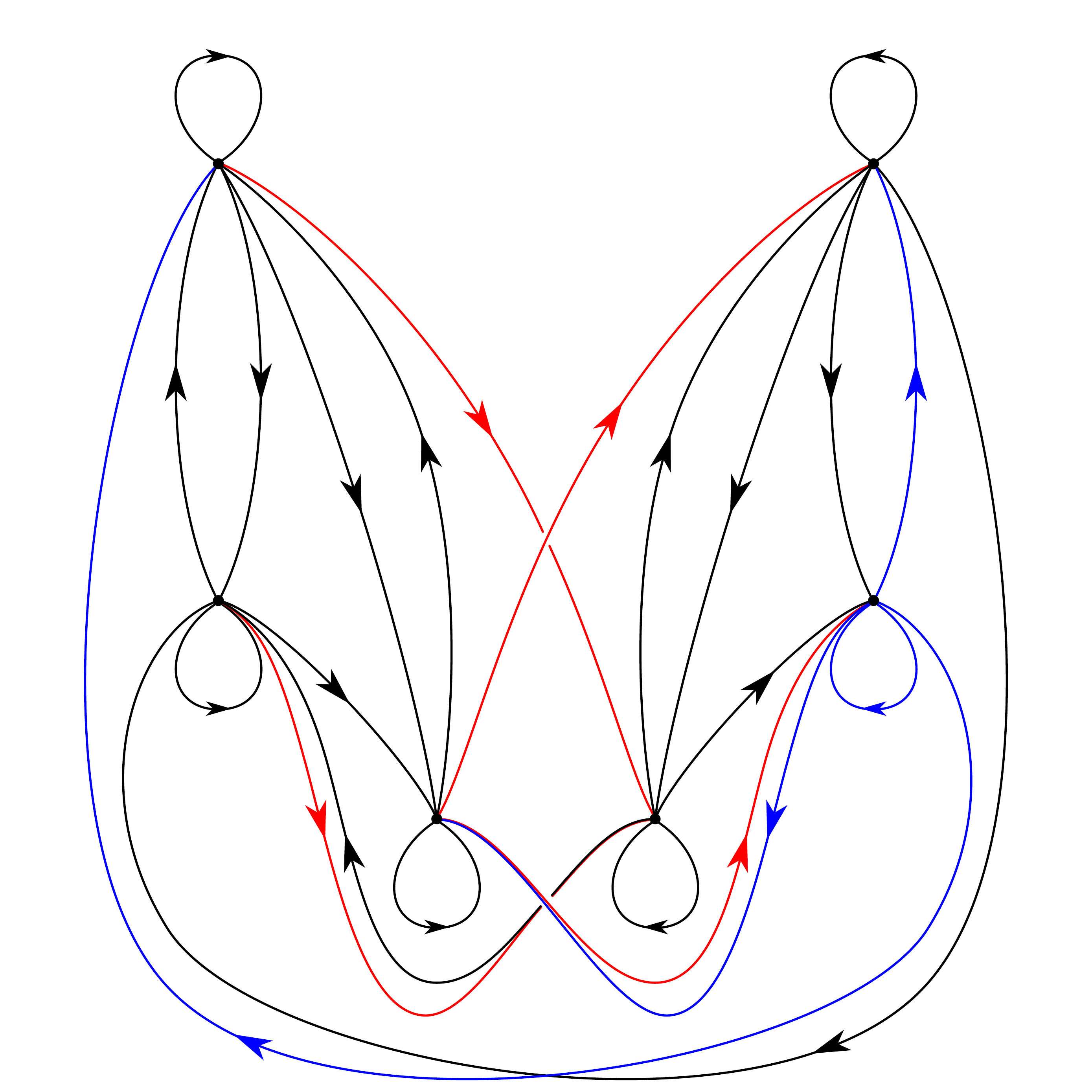
    \caption{The graph of $\mathcal{CSP}(f)$ for Example
      \ref{ex:iwip-rose}}
    \label{fig:CSPRose}
  }
\end{figure}
\end{example}

\begin{remark}
  \label{rmk:edges}
  A basic observation about the graph $\mathcal{CSP}$ is that every
  vertex $\tau_\sigma$ has at least one incoming and at least one
  outgoing edge.  While this is really just a consequence of the fact
  that every vertex in a CT has at least two gates, a bit of care is
  needed to justify this formally.  Indeed, let $v$ be the initial
  endpoint of $\sigma$.  If there is some legal turn $(E,\sigma)$ at
  $v$ where $E$ is an edge in an irreducible stratum, then
  $\overline{E}\cdot \sigma$ is completely split so there is an edge
  in $\mathcal{CSP}$ from $\tau_{\overline{E}}$ to $\tau_\sigma$.  The
  other possibility is that the only legal turns
  $(\underline{\quad},\sigma)$ at $v$ consist of an edge in a zero
  stratum $H_i$.  In this case, (Zero Strata) guarantees that $v$ is
  contained in the EG stratum $H_r$ which envelops $H_i$ and that the
  link of $v$ is contained in $H_i\cup H_r$.  In particular, there are
  a limited number of possibilities for $\sigma$; $\sigma$ may be a
  taken connecting subpath in $H_i$, an edge in $H_r$, or an EG INP of
  height $r$.  In the first two cases, $\sigma$ is a term in the
  complete splitting of $f^k_\#(E)$ for some edge $E$.  By increasing
  $k$ if necessary, we can guarantee that $\sigma$ is not the first or
  last term in this splitting.  Therefore, there is a directed edge in
  $\mathcal{CSP}$ with terminal endpoint $\tau_\sigma$.  In the case
  that $\sigma$ is an INP, $\sigma$ has a first edge $E_0$ which is
  necessarily of EG height.  We have already established that there is
  a directed edge in $\mathcal{CSP}$ pointed to $\tau_{E_0}$, so we
  just observe that any vertex in $\mathcal{CSP}$ with a directed edge
  ending at $E_0$ will also have a directed edge terminating at
  $\tau_\sigma$.  The same argument shows that there is an edge in
  $\mathcal{CSP}$ emanating from $\tau_\sigma$.
\end{remark}

The statement of Proposition \ref{prop:split-circuit} can now be
rephrased as a statement about the graph $\mathcal{CSP}$.  Namely,
that there is a directed loop in $\mathcal{CSP}$ which passes through
every vertex.

We will need some basic terminology from the study of directed graphs.
We say a di-graph $\Gamma$ is \emph{strongly connected} if every
vertex can be connected to every other vertex in $\Gamma$ by a
directed edge path.  In any di-graph, we may define an equivalence
relation on the vertices by declaring $v\sim w$ if there is a directed
edge path from $v$ to $w$ and vice versa (we are required to allow the
trivial edge path so that $v\sim v$).  of $\Gamma$.  The equivalence
classes of this relation partition the vertices of $\Gamma$ into
\emph{strongly connected components}.

We will prove that $\mathcal{CSP}(f)$ is connected and has one
strongly connected component.  From this, Proposition
\ref{prop:split-circuit} follows directly.  The proof proceeds by
induction on the core filtration of $G$, which is the filtration
obtained from the given one by considering only the filtration
elements which are their own cores.  Because the base case is in fact
more difficult than the inductive step, we state it as a lemma.

\begin{lemma}
  \label{lem:base-case}
  If $f\colon G\to G$ is a CT representing a fully irreducible
  automorphism, then $\mathcal{CSP}(f)$ is connected and strongly
  connected.
\end{lemma}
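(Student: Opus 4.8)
The plan is to begin by recording the structure of a CT representing a fully irreducible $\phi$. Since $\phi$ is fully irreducible there is no proper, nontrivial $f$-invariant free factor system, so no filtration element $G_i$ with $0<i<m$ can have nontrivial core; consequently (as is standard, cf.\ \cite{BH-TT,FH-Recog}) $G$ consists of a single stratum $H_1=G$, which is EG, and there are no zero or NEG strata, hence no linear or exceptional paths. Therefore the vertices of $\mathcal{CSP}(f)$ are exactly the edge-vertices $\tau_E,\tau_{\overline E}$, one pair for each unoriented edge of $G$, together with the pair $\tau_\rho,\tau_{\overline\rho}$ associated to the indivisible Nielsen path $\rho$ of height $m$, \emph{if} there is one (there is at most one by \cite[Corollary 4.19]{FH-AB}). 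Since the underlying graph of a strongly connected digraph is connected, it suffices to prove that $\mathcal{CSP}(f)$ is strongly connected.

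The heart of the matter is the full subgraph $\mathcal{D}\subseteq\mathcal{CSP}(f)$ spanned by the edge-vertices. By uniqueness of complete splittings \cite[Lemma 4.11]{FH-Recog}, $\tau_X\to\tau_Y$ is an edge of $\mathcal{D}$ if and only if the turn $\{\overline X,Y\}$ is legal, i.e.\ if and only if $XY$ is a legal edge path; thus $\mathcal{D}$ is precisely the ``legal edge digraph'' of $G$. I would show $\mathcal{D}$ is strongly connected by contradiction: if not, $\mathcal{D}$ contains a strongly connected component $C$ with no edge leaving it, so its vertex set $S$ is a proper, nonempty set of oriented edges with the property that every legal turn out of $S$ returns to $S$. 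Now I invoke the standard fact that a generic leaf $\ell$ of the attracting lamination of a fully irreducible automorphism, realized in $G$, is a bi-infinite legal path that crosses every oriented edge of $G$ infinitely often: this follows from minimality of the attracting lamination \cite{BFHI}, since $\ell$ fills $F_n$ and so crosses every unoriented edge, since $\overline\ell$ is again a generic leaf of the same lamination so both orientations occur, and since minimality forces each such occurrence to recur with bounded gaps. Writing $\ell=\cdots X_{-1}X_0X_1\cdots$ as its edge path, each $X_iX_{i+1}$ is a legal turn, so once $\ell$ enters $S$ it never leaves $S$ in the forward direction, contradicting the fact that $\ell$ crosses edges outside $S$ arbitrarily far forward. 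Hence $\mathcal{D}$ is strongly connected. I expect this to be the main obstacle; once it is in place only bookkeeping remains.

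Finally I would attach $\tau_\rho$ and $\tau_{\overline\rho}$ to this component using the computation behind Remark \ref{rmk:edges}. The first and last edges $E_0$ and $F$ of the height-$m$ INP $\rho$ lie in the EG stratum, so $\tau_{E_0},\tau_F\in\mathcal{D}$; and directly from the definition of $\mathcal{CSP}(f)$, the incoming neighbors of $\tau_\rho$ coincide with those of $\tau_{E_0}$ (the turn at the concatenation point of $Y\rho$ is $(\overline Y,E_0)$), and the outgoing neighbors of $\tau_\rho$ coincide with those of $\tau_F$ (the turn at the concatenation point of $\rho Z$ is $(\overline F,Z)$). Since $F_n$ has rank at least $2$, $\mathcal{D}$ has more than one vertex, so by strong connectivity $\tau_{E_0}$ has an incoming edge from a vertex of $\mathcal{D}$ and $\tau_F$ has an outgoing edge to a vertex of $\mathcal{D}$; hence $\tau_\rho$ is reached from $\mathcal{D}$ and reaches $\mathcal{D}$, and as $\mathcal{D}$ is a single strongly connected component, $\tau_\rho$ belongs to it. Applying the identical argument to $\overline\rho$, whose first edge is $\overline F$ and whose last edge is $\overline{E_0}$, places $\tau_{\overline\rho}$ there as well. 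Thus all vertices of $\mathcal{CSP}(f)$ lie in one strongly connected component, so $\mathcal{CSP}(f)$ is strongly connected, and in particular connected.
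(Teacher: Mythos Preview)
Your argument has a genuine gap: the claim that the edge-subgraph $\mathcal{D}$ is strongly connected is false in general, and the error is precisely in the assertion that a single generic leaf $\ell$, read in the forward direction, crosses every \emph{oriented} edge of $G$. Your justification ``$\overline\ell$ is again a generic leaf of the same lamination so both orientations occur'' only shows that the lamination as a set of unoriented lines contains every oriented edge; it does not show that the particular sequence $\cdots X_{-1}X_0X_1\cdots$ does. When the attracting lamination is orientable this fails outright: one can orient the edges of $G$ so that every leaf, read forward, uses only positively oriented edges. In that case the sink component $S$ in your contradiction argument can simply be the set of positively oriented edges, and $\ell$ stays in $S$ forever with no contradiction.

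Example~\ref{ex:iwip-rose} already exhibits this. There $Df$ has gates $\{a\},\{b\},\{\overline a,\overline b\}$, and one checks directly that from $\tau_a$ and $\tau_b$ the only legal continuations are again $\tau_a$ or $\tau_b$; there is no directed edge in $\mathcal{D}$ from $\{\tau_a,\tau_b\}$ to $\{\tau_{\overline a},\tau_{\overline b}\}$. Thus $\mathcal{D}=\mathcal{CSP}_e$ has two strongly connected components, and the INP $\sigma=ab\overline a\,\overline b$ is not a cosmetic add-on but the only way to pass from the first component to the second. The paper's proof confronts exactly this: it splits on orientability of $\Lambda$, and in the orientable case introduces arrival/departure gates and, in the critical subcase where every vertex has a unique arrival gate, proves a separate lemma guaranteeing the existence of an INP whose illegal turn provides the missing ``turn-around''. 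Your proof would need an analogous mechanism; treating the INP only at the end cannot succeed.
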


\begin{proof}
  Under these assumptions, there are two types of vertices in
  $\mathcal{CSP}(f)$: those labeled by edges, and those labeled by
  INPs.  We denote by $\mathcal{CSP}_e$ the subgraph consisting of
  only the vertices which are labeled by edges.  Recall that $\tau_E$
  denotes the vertex in $\mathcal{CSP}$ corresponding to the edge $E$.
  If the leaves of the attracting lamination are non-orientable, then
  we can produce a path in $\mathcal{CSP}_e$ starting at $\tau_E$,
  then passing through every other vertex in $\mathcal{CSP}_e$, and
  finally returning to $\tau_E$ by looking at a long segment of a leaf
  of the attracting lamination.  More precisely, (Completely Split)
  says that $f^k(E)$ is a completely split path for all $k\geq 0$ and
  the fact that $f$ is a train track map says that this complete
  splitting contains no INPs.  Moreover, irreducibility of the
  transition matrix and non-orientability of the lamination implies
  that for sufficiently large $k$ this path not only contains every
  edge in $G$ (with both orientations), but contains the edge $E$
  followed by every other edge in $G$ with both of its orientations,
  and then the edge $E$ again.  Such a path in $G$ exactly shows that
  $\mathcal{CSP}_e$ is connected and strongly connected.

  We isolate the following remark for future reference.

  \begin{remark}
    \label{rmk:INPs}
    If there is an indivisible Nielsen path $\sigma$ in $G$, write its
    edge path $\sigma=E_1E_2\ldots E_k$ (recall that all INPs in a CT
    have endpoints at vertices).  If $\tau_{\sigma'}$ is any vertex in
    $\mathcal{CSP}$ with a directed edge pointing to $\tau_{E_1}$,
    then $\sigma'\cdot \sigma$ is completely split since the turn
    $(\overline{\sigma}',\sigma)$ must be legal.  Hence there is also
    a directed edge in $\mathcal{CSP}$ from $\tau_{\sigma'}$ to
    $\tau_\sigma$. The same argument shows that there is an edge in
    $\mathcal{CSP}$ from $\tau_\sigma$ to some vertex
    $\tau'\neq\tau_\sigma$.
  \end{remark}

  Since $\mathcal{CSP}_e$ is strongly connected, and the remark
  implies that each vertex $\tau_\sigma$ (for $\sigma$ an INP in $G$)
  has directed edges coming from and going back into
  $\mathcal{CSP}_e$, we conclude that $\mathcal{CSP}$ is strongly
  connected in the case that leaves of the attracting lamination are
  non-orientable.

  Now choose an orientation on the attracting lamination $\Lambda$.
  If we imagine an ant following the path in $G$ determined by a leaf
  of $\Lambda$, then at each vertex $v$ we see the ant arrive along
  certain edges and leave along others.  Let $E$ be an edge with
  initial vertex $v$ so that $E$ determines a gate $[E]$ at $v$.  We
  say that $[E]$ is a \emph{departure gate} at $v$ if $E$ occurs in
  some (any) oriented leaf $\lambda$.  Similarly, we say the gate
  $[E]$ is an \emph{arrival gate} at $v$ if the edge $\overline{E}$
  occurs in $\lambda$.  Some gates may be both arrival and departure
  gates.

  Suppose now that there is some vertex $v$ in $G$ that has at least
  two arrival gates and some vertex $w$ that has at least two
  departure gates.  As before, we will produce a path in
  $\mathcal{CSP}_e$ that shows this subgraph has one strongly
  connected component.  Start at any edge in $G$ and follow a leaf
  $\lambda$ of the lamination until you have crossed every edge with
  its forward orientation.  Continue following the leaf until you
  arrive at $v$, say through the gate $[\overline{E}]$.  Since $v$ has
  two arrival gates, there is some edge $E'$ which occurs in $\lambda$
  with the given orientation and whose terminal vertex is $v$
  ($[\overline{E'}]$ is a second arrival gate).  Now turn onto
  $\overline{E'}$.  Since $[\overline{E}]$ and $[\overline{E'}]$ are
  distinct gates, this turn is legal.  Follow $\overline{\lambda}$
  going backwards until you have crossed every edge of $G$ (now in the
  opposite direction).  Finally, continue following
  $\overline{\lambda}$ until you arrive at $w$, where there are now
  two arrival gates because you are going backwards.  Use the second
  arrival gate to turn around a second time, and follow $\lambda$ (now
  in the forwards direction again) until you cross the edge you
  started with.  By construction, this path in $G$ is completely split
  and every term in its complete splitting is a single edge.  The
  associated path in $\mathcal{CSP}_e$ passes through every vertex and
  then returns to the starting vertex, so $\mathcal{CSP}_e$ is
  strongly connected.  In the presence of an INP, Remark
  \ref{rmk:INPs} completes the proof of the lemma under the current
  assumptions.

  We have now reduced to the case where the lamination is orientable
  \emph{and} either every vertex has only one departure gate or every
  vertex has only one arrival gate.  The critical case is the latter
  of the two, and we would like to conclude in this situation that
  there is an INP.  Example \ref{ex:iwip-rose} illustrates this
  scenario.  Some edges are colored red to illustrate the fact that in
  order to turn around and get from the vertices labeled by $a$ and
  $b$ to those labeled by $\overline{a}$ and $\overline{b}$, one must
  use an INP.  The existence of an INP in this situation is provided
  by the following lemma.

  \begin{lemma}\label{lem:find-INP}
    Assume $f\colon G\to G$ is a CT representing a fully irreducible
    rotationless automorphism.  Suppose that the attracting lamination
    is orientable and that every vertex has exactly one arrival gate.
    Then $G$ has an INP, $\sigma$, and the initial edges of $\sigma$
    and $\overline{\sigma}$ are oriented consistently with the
    orientation of the lamination.
  \end{lemma}

  We postpone the proof of this lemma and explain how to conclude our
  argument.  If every vertex has one arrival gate, then we apply the
  lemma to conclude that there must be an INP.  Since INPs have
  exactly one illegal turn, using the previous argument, we can turn
  around once.  Now if we are again in a situation where there is only
  one arrival gate, then we can apply the lemma a second time (this
  time with the orientation of $\Lambda$ reversed) to obtain the
  existence of a second INP, allowing us to turn around a second time.
\end{proof}

We remark that since there is at most one INP in each EG stratum of a
CT, Lemma \ref{lem:find-INP} implies that if the lamination is
orientable, then some vertex of $G$ must have at least 3 gates.

\begin{proof}[Proof of Lemma \ref{lem:find-INP}]
  There is a vertex of $G$ that is fixed by $f$ since \cite[Lemma
  3.19]{FH-Recog} guarantees that every EG stratum contains at least
  one principal vertex and principal vertices are fixed by
  (Rotationless).  Choose such a vertex $v$ and let
  $\tilde{v}\in \Gamma$ be a lift of $v$ to the universal cover
  $\Gamma$ of $G$.  Let $g$ be the unique arrival gate at $\tilde{v}$.
  Lift $f$ to a map $\tilde{f}\colon \Gamma\to\Gamma$ fixing
  $\tilde{v}$.  Let $T$ be the infinite subtree of $\Gamma$ consisting
  of all embedded rays $\gamma\colon [0,\infty)\to \Gamma$ starting at
  $\tilde{v}$ and leaving every vertex through its unique arrival
  gate.  That is $\gamma(0)=\tilde{v}$ and whenever $\gamma(t)$ is a
  vertex, $D\gamma(t)$ should be the unique arrival gate at
  $\gamma(t)$.  Refer to Figure \ref{fig:IllegalTree} for the tree $T$
  for Example \ref{ex:iwip-rose}.

  \begin{figure}[h]
    \centering{ \def\svgwidth{.7\linewidth}
      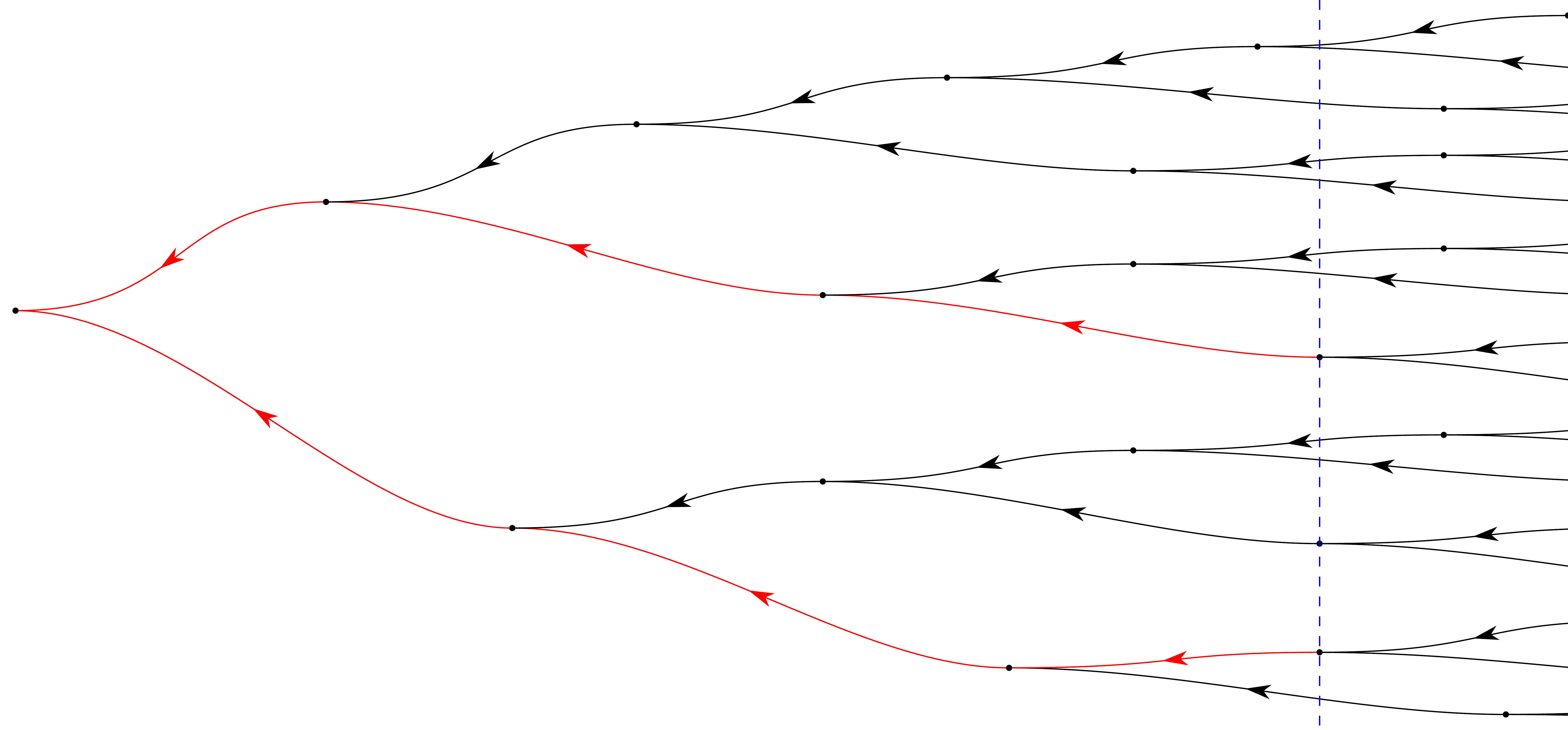
      \caption{The tree $T$ for Example \ref{ex:iwip-rose}.  The red
        path connects two vertices of the same height.}
      \label{fig:IllegalTree}
    }
  \end{figure}

  First, we claim that $\tilde{f}(T)\subset T$.  To see this, notice
  that since $f$ is a topological representative, it suffices to show
  that $\tilde{f}(p)\in T$ for every vertex $p$ of $T$.  Notice that
  vertices $p$ of $T$ are characterized by two things: first
  $[\tilde{v},p]$ is legal, and second, for every edge $E$ in the edge
  path of $[\tilde{v},p]$, the gate $[E]$ is the unique arrival gate
  at the initial endpoint of $E$.  Now
  $[\tilde{v},\tilde{f}(p)]=\tilde{f}([\tilde{v},p])$ is legal because
  $f$ is a train track map.  Moreover, every edge $E$ in the edge path
  of $[\tilde{v},p]$ occurs (with orientation) in a leaf
  $\overline{\lambda}$ of the lamination.  Since $\tilde{f}$ takes
  leaves to leaves preserving orientation, the same is true for
  $\tilde{f}([\tilde{v},p])$.  The gate determined by every edge in
  the edge path of $\overline{\lambda}$ is the unique arrival gate at
  that vertex.  Thus, for every edge $E$ in the edge path of
  $\tilde{f}([\tilde{v},p])$, $[E]$ is the unique arrival gate at that
  vertex, which means that $\tilde{f}(p)\in T$.

  Endow $G$ with a metric using the left PF eigenvector of the
  transition matrix so that for every edge of $G$, we have
  $\ell(f(E))=\nu\,\ell(E)$ where $\nu$ is the PF eigenvalue of the
  transition matrix.  Lift the metric on $G$ to a metric on $\Gamma$
  and define a height function on the tree $T$ by measuring the
  distance to $\tilde{v}$: $h(p)=d(p,\tilde{v})$.  Since legal paths
  are stretched by exactly $\nu$, we have that for any $p\in T$,
  $h(\tilde{f}(p))=\nu h(p)$.

  Now let $w$ and $w'$ be two distinct lifts of $v$ with the same
  height, $h(w)=h(w')$.  To see that this is possible, just take
  $\alpha$ and $\beta$ to be two distinct
  ($\langle\alpha,\beta\rangle\simeq F_2$) circuits in $G$ based at
  $v$ which are obtained by following a leaf of the lamination.  The
  initial vertices of the lifts of $\alpha\beta$ and $\beta\alpha$
  which end at $\tilde{v}$ are distinct lifts of $v$ which are
  contained in $T$, and have the same height.

  Let $\tau$ be the unique embedded segment connecting $w$ to $w'$ in
  $T$.  By \cite[Lemma 4.25]{FH-Recog}, $\tilde{f}^k_\#(\tau)$ is
  completely split for all sufficiently large $k$.  Moreover, the
  endpoints of $\tilde{f}^k_\#(\tau)$ are distinct since the
  restriction of $\tilde{f}$ to the lifts of $v$ is injective.  This
  is simply because
  $\tilde{f}\colon (\Gamma,\tilde{v})\to(\Gamma,\tilde{v})$ represents
  an automorphism of $F_n$ and lifts of $v$ correspond to elements of
  $F_n$.  Now observe that the endpoints $\tilde{f}^k_\#(\tau)$ have
  the same height and for any pair of distinct vertices with the same
  height, the unique embedded segment connecting them must contain an
  illegal turn.  This follows from the definition of $T$ and the
  assumption that every vertex has a unique arrival gate.  Therefore,
  the completely split path $\tilde{f}^k_\#(\tau)$ contains an illegal
  turn.  In particular, it must have an INP in its complete splitting.
  That the initial edges of $\sigma$ and $\overline{\sigma}$ are
  oriented consistently with the orientation on $\lambda$ is evident
  from the construction.
\end{proof}

The key to the inductive step is provided by the ``moving up through
the filtration'' lemma from \cite{FH-AB} which explicitly describes
how the graph $G$ can change when moving from one element of the core
filtration to the next.  Recall the \emph{core filtration} of $G$ is
the filtration
$G_0\subseteq G_{l_1}\subseteq\ldots\subseteq G_{l_k}=G_m=G$ obtained
by restricting to those filtration elements which are their own cores.
For each $G_{l_i}$, the $i$-th stratum of the core filtration is
defined to be $H_{l_i}^c=\bigcup_{j=l_{i-1}+1}^{l_i}H_j$.  Finally, we
let $\Delta \chi_i^-=\chi(G_{l_{i-1}})-\chi(G_{l_i})$ denote the
negative of the change in Euler characteristic.

\begin{lemma}[{\cite[Lemma 8.3]{FH-AB}}]
  \label{lem:moving-up}
  \begin{enumerate}
  \item If $H_{l_i}^c$ does not contain any EG strata then one of the
    following holds.
    \begin{enumerate}
    \item \label{case-1a}$l_i=l_{i-1}+1$ and the unique edge in
      $H_{l_i}^c$ is a fixed loop that is disjoint from $G_{l_{i-1}}$.
    \item \label{case-1b}$l_i=l_{i-1}+1$ and both endpoints of the
      unique edge in $H_{l_i}^c$ are contained in $G_{l_{i-1}}$.
    \item \label{case-1c}$l_i=l_{i-1}+2$ and the two edges in
      $H_{l_i}^c$ are nonfixed and have a common initial endpoint that
      is not in $H_{l_{i-1}}$ and terminal endpoints in $G_{l_{i-1}}$.
    \end{enumerate}
    In case \ref{case-1a}, $\Delta_i\chi^-=0$; in cases \ref{case-1b}
    and \ref{case-1c}, $\Delta_i\chi^-=1$.
  \item \label{case-2} If $H_{l_i}^c$ contains an EG stratum, then
    $H_{l_i}$ is the unique EG stratum in $H_{l_i}^c$ and there exists
    $l_{i-1}\leq u_i<l_i$ such that both of the following hold.
    \begin{enumerate}
    \item For $l_{i_1}< j\le u_i$, $H_j$ is a single nonfixed edge
      $E_j$ whose terminal vertex is in $G_{l_{i-1}}$ and whose
      initial vertex has valence one in $G_{u_i}$. In particular,
      $G_{u_i}$ deformation retracts to $G_{l_{i-1}}$ and
      $\chi(G_{u_i})=\chi(G_{l_{i-1}})$.
    \item For $u_i < j < l_i$, $H_j$ is a zero stratum. In other
      words, the closure of $G_{l_i}\setminus G_{u_i}$ is the extended
      $EG$ stratum $H^z_{l_i}$.
    \end{enumerate}
    If some component of $H^c_{l_i}$ is disjoint from $G_{u_i}$ then
    $H^c_{l_i}=H_{l_i}$ is a component of $G_{l_i}$ and
    $\Delta_i\chi^-\ge 1$; otherwise $\Delta_i\chi^-\ge 2$.
  \end{enumerate}
\end{lemma}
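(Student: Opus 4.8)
The plan is to read off the structure of each core-filtration block directly from the defining properties of a CT in Definition \ref{def:ct}. Fix $i$ and consider the strata $H_j$ with $l_{i-1}<j\le l_i$, which comprise $H^c_{l_i}$. Two facts organize the argument. First, in a CT a stratum that is irreducible but not EG is a single edge (its transition matrix is the $1\times1$ identity), while by (Zero Strata) together with the definition of ``enveloped by $H_r$'' every zero stratum sits at an index immediately below the EG stratum that envelops it, just above the irreducible stratum $H_u$ of that envelope. Second, $G_{l_{i-1}}$ and $G_{l_i}$ are core graphs while no intermediate $G_j$ is, so $l_i$ is the \emph{first} index above $l_{i-1}$ at which the graph has no valence-one vertex; hence every vertex created inside the block must be brought up to valence at least two by the time we reach $H_{l_i}$.

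First I would treat the case that $H^c_{l_i}$ contains no EG stratum. Then it contains no zero stratum either, so every $H_j$ in the block is a single edge, fixed or non-fixed NEG. Examine the bottom edge $E=H_{l_{i-1}+1}$. If $E$ is a fixed loop, then by (Periodic Edges) it is disjoint from the core graph $G_{l_{i-1}}$ and its new vertex already has valence two, so $G_{l_{i-1}+1}$ is a core graph and $l_i=l_{i-1}+1$, giving \ref{case-1a}. If both endpoints of $E$ lie in $G_{l_{i-1}}$, no valence-one vertex is created and again $l_i=l_{i-1}+1$, giving \ref{case-1b}. Otherwise $E$ must be non-fixed NEG — a non-loop fixed edge would have both endpoints in $G_{l_{i-1}}$ by (Periodic Edges) — so by (Vertices) its terminal endpoint is principal, hence fixed and in $G_{l_{i-1}}$, while its initial endpoint $p$ is a new valence-one vertex; reaching a core graph then forces a second edge at $p$, and the same analysis applied one stratum higher shows $H_{l_{i-1}+2}$ is a second non-fixed NEG edge sharing the initial vertex $p$ with terminal endpoint back in $G_{l_{i-1}}$, after which the block closes and $l_i=l_{i-1}+2$, giving \ref{case-1c}. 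The Euler characteristic count is immediate: a loop together with its vertex leaves $\chi$ unchanged ($\Delta_i\chi^-=0$), whereas adding one edge with no new vertex, or two edges with one new vertex, drops $\chi$ by one ($\Delta_i\chi^-=1$).

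Next I would treat the case that $H^c_{l_i}$ contains an EG stratum $H_r$. The crux is that, by the first organizing fact, the zero strata of the envelope of $H_r$ and the non-fixed edges supporting it all sit at indices immediately below $r$, so $G_r$ acquires no valence-one vertex beyond those of $G_{l_{i-1}}$; since $G_{l_{i-1}}$ is a core graph, so is $G_r$, and as $l_i$ is the first core index above $l_{i-1}$ this forces $r=l_i$. Thus $H_{l_i}$ is the unique EG stratum of the block and it is on top. Taking $u_i$ to be the index just below the zero-stratum run, so that $\overline{G_{l_i}\setminus G_{u_i}}$ is the extended stratum $H^z_{l_i}$, the strata $H_j$ for $l_{i-1}<j\le u_i$ are neither EG nor zero and hence single edges; (Vertices), (Periodic Edges) and the requirement that each such edge be absorbed by the zero strata or by $H_{l_i}$ — whose new vertices, by (Zero Strata), have links inside $H_j\cup H_r$ — force each to be a non-fixed edge with terminal vertex in $G_{l_{i-1}}$ and initial vertex of valence one in $G_{u_i}$, so $G_{u_i}$ deformation retracts onto $G_{l_{i-1}}$ and $\chi(G_{u_i})=\chi(G_{l_{i-1}})$. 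Finally, since an EG stratum carries an irreducible transition matrix of size at least two, adjoining $H^z_{l_i}$ drops the Euler characteristic by at least two when the resulting subgraph is attached to $G_{u_i}$, and by at least one when $H^c_{l_i}=H_{l_i}$ is a whole component of $G_{l_i}$ disjoint from $G_{u_i}$; this gives the stated bounds on $\Delta_i\chi^-$.

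The step I expect to be the main obstacle is precisely this structural identification in the EG case: showing that a block containing an EG stratum has exactly the shape ``a run of supporting non-fixed NEG edges, then a run of zero strata, then a single EG stratum on top'', with all the attendant valence and link constraints. This forces one to combine (Zero Strata), the ``enveloped by $H_r$'' definition, (Vertices) and (Periodic Edges) at once, and in particular to rule out an EG stratum in the interior of a block or a zero stratum separated from the EG stratum that envelops it. Once this picture is in place, the edge-by-edge analysis of the non-EG case and the Euler characteristic arithmetic are routine.
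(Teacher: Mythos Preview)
The paper does not give its own proof of this lemma: it is quoted verbatim from \cite[Lemma 8.3]{FH-AB} and used as a black box in the inductive analysis of $\mathcal{CSP}$. So there is no ``paper's proof'' to compare your proposal against.

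That said, your outline is essentially the argument Feighn--Handel give: one reads the block structure directly off the CT axioms, treating the non-EG case by an edge-by-edge analysis and the EG case by identifying the envelope. A couple of points where your sketch is slightly loose: in the non-EG case, (Periodic Edges) does not by itself force a fixed loop to be disjoint from $G_{l_{i-1}}$ --- a fixed loop with vertex in $G_{l_{i-1}}$ is allowed and simply falls under \ref{case-1b}, so your case split should be ``both endpoints in $G_{l_{i-1}}$'' versus ``not'', with the fixed-loop-disjoint case arising only in the latter. In the EG case, the assertion that $G_r$ is already a core graph (forcing $r=l_i$) is the heart of the matter; it follows from (Filtration) together with the fact that the components of $G_r$ are non-contractible when $H_r$ is EG, but this deserves to be spelled out rather than asserted. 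Your identification of the main obstacle is accurate.
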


As we move up through the core filtration, we imagine adding new
vertices to $\mathcal{CSP}$ and adding new edges connecting these
vertices to each other and to the vertices already present.  Thus, we
define $\mathcal{CSP}_{l_i}$ to be the subgraph of $\mathcal{CSP}$
consisting of vertices labeled by allowable terms in $G_{l_i}$.  Here
we use the fact that the restriction of $f$ to each connected
component of an element of the core filtration is a CT.

The problem with proving that $\mathcal{CSP}$ is strongly connected by
induction on the core filtration is that $\mathcal{CSP}_{l_i}$ may
have multiple connected components.  This only happens, however, if
$G_{l_i}$ has more than one connected component in which case
$\mathcal{CSP}_{l_i}$ will have multiple connected components.  If any
component of $G_{l_i}$ is a topological circle (necessarily consisting
of a single fixed edge $E$), then $\mathcal{CSP}_{l_i}$ will have two
connected components for this circle.

\begin{lemma}
  For every $1\leq i\leq k$, the number of strongly connected
  components of $\mathcal{CSP}_{l_i}(f)$ is equal to
  \begin{equation*}
    2\cdot\#\left\{\text{connected components of }G_{l_i}\text{that are circles}\right\}
    +\#\left\{\text{connected components of }G_{l_i}\text{that are not circles}\right\}
  \end{equation*}
\end{lemma}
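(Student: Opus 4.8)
The plan is to prove the count by induction on $i$ along the core filtration, using Lemma \ref{lem:base-case} for the base case and Lemma \ref{lem:moving-up} to control what happens at each step. For $i=1$, the graph $G_{l_1}$ is a connected core graph on which $f$ restricts to a CT representing a fully irreducible automorphism (or, degenerately, a single fixed loop): if it is a circle we get the two vertices $\tau_E,\tau_{\overline E}$ with no edges between them (the turn at either vertex is illegal), hence two strongly connected components, matching the formula; otherwise Lemma \ref{lem:base-case} gives exactly one. For the inductive step, assume the formula holds for $\mathcal{CSP}_{l_{i-1}}$ and analyze the passage from $G_{l_{i-1}}$ to $G_{l_i}$ using the case division of Lemma \ref{lem:moving-up}.

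In each case I would identify precisely which new vertices are added to $\mathcal{CSP}$ and which new edges appear, then argue that every newly added vertex lies in the same strongly connected component as some old vertex, so that the number of strongly connected components changes only according to how the \emph{set} of connected components of $G$ changes. Concretely: in case \ref{case-1a} a disjoint fixed loop is added, contributing a brand-new circle component of $G$ and hence (as in the base case) two new strongly connected components, consistent with the formula. In case \ref{case-1b} a single edge $E$ with both endpoints in $G_{l_{i-1}}$ is added; since $E$ is not a loop and $G_{l_{i-1}}$ is a core graph, both $\tau_E$ and $\tau_{\overline E}$ acquire incoming and outgoing edges to old vertices (legal turns exist because $G_{l_{i-1}}$ has at least two gates at each endpoint, using Remark \ref{rmk:edges}), and $f$ eventually splits $f^k_\#(E)$ so as to realize a path through $\tau_E$ back into the old graph; thus the two new vertices merge into existing components and, if this edge connects two previously distinct components of $G$, those components (and their $\mathcal{CSP}$ components) merge into one — again matching the formula. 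Case \ref{case-1c} (two nonfixed edges sharing an initial vertex with terminal endpoints in $G_{l_{i-1}}$) is handled similarly, noting the shared initial vertex is not in $G_{l_{i-1}}$ so the relevant legal turns are the mixed ones provided by the CT structure, and any NEG Nielsen path $E_iw_i^k\overline E_i$ that appears contributes a single vertex which attaches to the component of $\tau_{E_i}$ by Remark \ref{rmk:INPs}. The EG case \ref{case-2} is the substantial one: here $H_{l_i}$ is a new EG stratum with its enveloping extended stratum $H^z_{l_i}$ (adding vertices for edges of $H_{l_i}$, for the NEG edges $E_j$ below it, for maximal taken connecting subpaths in the zero strata, for the possibly-unique EG INP of height $l_i$, and for exceptional/quasi-exceptional families with highest edge in $H_{l_i}$); I would use (RTT-i,ii,iii), (Zero Strata), and irreducibility of the transition matrix of $H_{l_i}$ to run a leaf-following argument exactly as in Lemma \ref{lem:base-case} within $G_{l_i}$, showing all these new vertices lie in one strongly connected component together with the part of $G_{l_{i-1}}$ into which $H_{l_i}$ connects, and then track which old components of $G$ get absorbed.

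The main obstacle I expect is the bookkeeping in the EG case: one must verify that the new EG stratum, together with all the zero strata it envelops and all the NEG edges $E_j$ attached below it, really does fuse into a \emph{single} strongly connected component that also engulfs every component of $G_{l_{i-1}}$ it touches — and that it touches all of them that it is supposed to, so that the component count of $G$ drops by exactly the right amount. This requires carefully distinguishing the sub-case where a component of $H^c_{l_i}$ is disjoint from $G_{u_i}$ (so $H^c_{l_i}=H_{l_i}$ is itself a new component of $G_{l_i}$, and one checks a long leaf segment gives a closed completely split circuit through all the new vertices, yielding a single new strongly connected component — consistent with the formula since a new non-circle component of $G$ was created) from the sub-case where $H^c_{l_i}$ attaches to $G_{u_i}$ (so the new vertices merge with existing components via the legal mixed turns of (RTT-i) and the taken connecting paths of (Zero Strata)). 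Handling the INP and (quasi-)exceptional-family vertices uniformly — using the fact from the text that there is at most one INP of height $l_i$, together with Remark \ref{rmk:INPs} — and confirming that circles among the new components are exactly the fixed loops from case \ref{case-1a}, will complete the induction and hence the proof.
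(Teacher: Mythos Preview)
Your overall plan---induction on the core filtration with Lemma~\ref{lem:base-case} as the base and Lemma~\ref{lem:moving-up} for the case split---is exactly the paper's approach. However, two points in your sketch are under-specified in ways that matter for the count.

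First, you do not address what happens when a component of $G_{l_{i-1}}$ that is a \emph{circle} gets attached to something in $G_{l_i}$. A circle contributes \emph{two} strongly connected components to $\mathcal{CSP}_{l_{i-1}}$ (the vertices $\tau_E$ and $\tau_{\overline E}$ are not connected to each other), and both must collapse into the single strongly connected component of $\mathcal{CSP}_{l_i}$ associated to the new non-circle component. Your line ``those components (and their $\mathcal{CSP}$ components) merge into one'' hides this: you need to exhibit directed edges in $\mathcal{CSP}_{l_i}$ reaching \emph{both} orientations of the circle. The paper handles this explicitly (and notes, for instance, that in case~\ref{case-1b} with a non-fixed NEG edge the terminal component $G_{l_{i-1}}^w$ cannot be a circle, by (Vertices)). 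Relatedly, your phrase ``since $E$ is not a loop'' in case~\ref{case-1b} is simply false---$E$ may well be a loop---and your appeal to iterates $f^k_\#(E)$ is misplaced for NEG edges, where the relevant $\mathcal{CSP}$-edges come directly from $f(E)=E\cdot u$ and from legal turns at $v$, not from iteration.

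Second, in the attached EG sub-case your plan to ``run a leaf-following argument exactly as in Lemma~\ref{lem:base-case}'' does not by itself show that all $\tau_E$ for $E\in H_{l_i}$ lie in one strongly connected component: if the lamination is orientable, leaf-following alone only gives two components, and the INP trick of Lemma~\ref{lem:find-INP} is not available here. The paper's actual mechanism is different and is the crux of the inductive step: pick an EG edge $E$ terminating at a vertex $w$ in some lower component $G^1\subset G_{u_i}$, use the inductive hypothesis on $\mathcal{CSP}_{l_{i-1}}$ together with the legality of mixed turns to produce a directed $\mathcal{CSP}$-path from $\tau_E$ through $G^1$ back to $\tau_{\overline E}$, and thereby ``turn around'' without needing an INP. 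You gesture at mixed turns, but this use of the inductive hypothesis to reverse orientation is the missing idea.
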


The following proof is in no way difficult.  It only requires a
careful analysis of the many possible cases.  The only case where
there is any real work is in case \ref{case-2} of Lemma
\ref{lem:moving-up}.

\begin{proof}
  Lemma \ref{lem:base-case} establishes the base case when $H_1^c$ is
  exponentially growing.  If $H_1^c$ is a circle, then
  $\mathcal{CSP}_{1}$ has exactly two vertices, each with a self loop,
  so the lemma clearly holds.  We now proceed to the inductive step,
  which is case-by-case analysis based on Lemma \ref{lem:moving-up}.
  We set some notation to be used throughout: $E$ will be an edge with
  initial vertex $v$ and terminal vertex $w$ (it's possible that
  $v=w$).  We denote by $G_{l_i}^v$ the component of $G_{l_i}$
  containing $v$ and similarly for $w$.  Let $\mathcal{CSP}_{l_i}^v$
  be the component(s) of $\mathcal{CSP}_{l_i}$ containing paths which
  pass through $v$.  In the case that $G_{l_i}^v$ is a topological
  circle, there will be two such components.

  In case \ref{case-1a} of Lemma \ref{lem:moving-up},
  $\mathcal{CSP}_{l_i}$ is obtained from $\mathcal{CSP}_{l_{i-1}}$ by
  adding two new vertices: $\tau_E$ and $\tau_{\overline{E}}$.  Each
  new vertex has a self loop, and no other new edges are added.  So
  the number of connected components of $\mathcal{CSP}$ increases by
  two.  Each component is strongly connected.

  In case \ref{case-1b}, there are several subcases according to the
  various possibilities for the edge $E$, and the topological types of
  $G_{l_{i-1}}^v$ and $G_{l_{i-1}}^w$.  First, suppose that $E$ is a
  fixed edge.  Then $\mathcal{CSP}_{l_i}$ is obtained from
  $\mathcal{CSP}_{l_{i-1}}$ by adding two new vertices.  There are no
  new INPs since the restriction of $f$ to each component of $G_{l_i}$
  is a CT and any INP is of the form provided by (NEG Nielsen Paths)
  or (EG Nielsen Paths).  As in Remark \ref{rmk:edges}, the vertex
  $\tau_E$ has an incoming edge with initial endpoint $\tau$ and an
  outgoing edge with terminal endpoint $\tau'$.  Moreover,
  $\tau\in\mathcal{CSP}_{l_{i-1}}^v$ and
  $\tau'\in\mathcal{CSP}_{l_{i-1}}^w$.  We then have a directed edge
  from $\sigma\in\mathcal{CSP}_{l_{i-1}}^w$ to $\tau_{\overline{E}}$
  and a directed edge from $\tau_{\overline{E}}$ to
  $\sigma'\in\mathcal{CSP}_{l_{i-1}}^v$.  Hence, there are directed
  paths in $\mathcal{CSP}_{l_i}$ connecting the two strongly connected
  subgraphs $\mathcal{CSP}_{l_{i-1}}^v$ and
  $\mathcal{CSP}_{l_{i-1}}^w$ to each other, and passing through all
  new vertices.  Therefore, there is one strongly connected component
  of $\mathcal{CSP}_{l_i}$ corresponding to the component of $G_{l_i}$
  containing $v$ (and $w$).  This component cannot be a circle, since
  it contains at least two edges.  In the case that $G_{l_{i-1}}^v$
  (resp.\ $G_{l_{i-1}}^w$) is a topological circle, we remark that
  there are incoming (resp.\ outgoing) edges in
  $\mathcal{CSP}_{l_i}^v$ (resp.\ $\mathcal{CSP}_{l_i}^w$) to $\tau_E$
  from each of the components of $\mathcal{CSP}_{l_{i-1}}^v$ (resp.\
  $\mathcal{CSP}_{l_{i-1}}^w$).  See Figure \ref{fig:NEGConnecting}.

  \begin{figure}[h]
    \centering{ \def\svgwidth{.7\linewidth}
      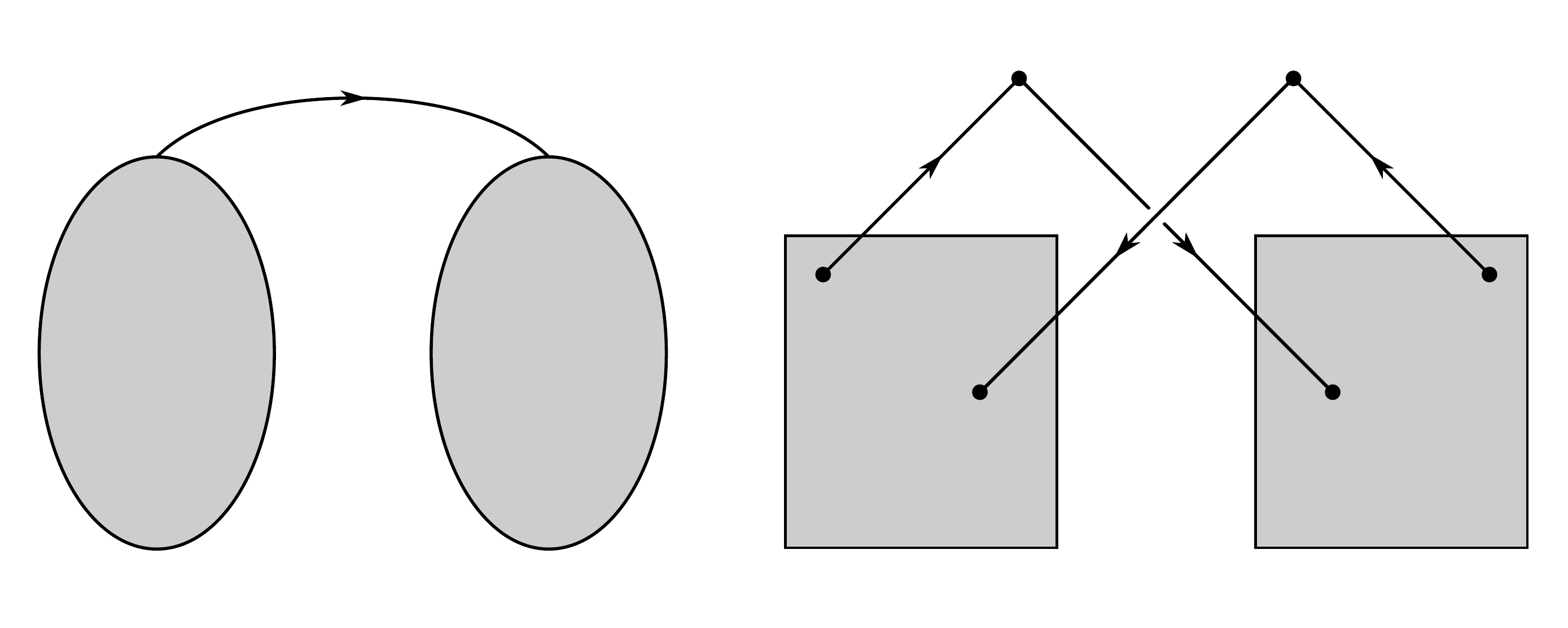
      \caption{A possibility for $G_{l_i}$ and the graph
        $\mathcal{CSP}_{l_i}$ when $H_{l_i}^c$ is a single NEG edge}
      \label{fig:NEGConnecting}
    }
  \end{figure}

  Suppose now that $E$ is a non-fixed NEG edge.  There are two new
  vertices in $\mathcal{CSP}_{l_i}$ labeled $\tau_E$ and
  $\tau_{\overline{E}}$.  The argument given in the previous paragraph
  goes through once we notice that if $v\neq w$, then $G_{l_{i-1}}^w$
  cannot be a circle since this would imply that $w$ is not a
  principal vertex in $G_{l_i}$ (see first bullet point in the
  definition) contradicting the fact that $f|_{G_{l_i}}$ is a CT
  ((Vertices) is not satisfied).

  If $E$ is a non-linear edge, then we are done.  If $E$ is linear,
  then there will be other new vertices in $\mathcal{CSP}_{l_i}$.
  There will be a new vertex for the family of NEG Nielsen paths
  $Eu^*\overline{E}$.  The fact that we have concluded the inductive
  step for the vertex $\tau_E$ along with remark \ref{rmk:INPs} shows
  that this new vertex is in the same strongly connected component as
  $\tau_E$.  There will also be two vertices for each family of
  exceptional paths $Eu^*\overline{E'}$.  For the exact same reasons,
  these vertices are also in this strongly connected component.  This
  concludes the proof in case \ref{case-1b} of Lemma
  \ref{lem:moving-up}.

  The arguments given thus far apply directly to case \ref{case-1c} of
  Lemma \ref{lem:moving-up}.  We remark that in this case, neither of
  the components of $G_{l_{i-1}}$ containing the terminal endpoints of
  the new edges can be circles for the same reason as before.

  The most complicated way that $G$ (and hence $\mathcal{CSP}$) can
  change is when $H_{l_i}^c$ contains an EG stratum.  In case
  \ref{case-2} of Lemma \ref{lem:moving-up}, if some component of
  $H_{l_i}^c$ is disjoint from $G_{u_i}$, then $H_{l_i}^c$ is a
  component of $G_{l_i}$ and the restriction of $f$ to this component
  is a fully irreducible.  In particular, $\mathcal{CSP}_{l_i}$ has
  one more strongly connected component than $\mathcal{CSP}_{l_{i-1}}$
  by Lemma \ref{lem:base-case}.

  Though case \ref{case-2} of Lemma \ref{lem:moving-up} describes
  $G_{l_i}$ as being built from $G_{l_{i-1}}$ in three stages from
  bottom to top, somehow it is easier to prove $\mathcal{CSP}_{l_i}$
  has the correct number of connected components by going from top to
  bottom.

  By looking at a long segment of a leaf of the attracting lamination
  for $H_{l_i}$, we can see as in Lemma \ref{lem:base-case} that the
  vertices in $\mathcal{CSP}_{l_i}$ labeled by edges in the EG stratum
  $H_{l_i}$ are in at most two different strongly connected
  components.  In fact, we can show that these vertices are all in the
  same strongly connected component.  Since we are working under the
  assumption that no component of $H_{l_i}^c$ is disjoint from
  $G_{u_i}$, we can use one of the components of $G_{u_i}$ to turn
  around on a leaf of the lamination.  Indeed, choose some component
  $G^1$ of $G_{u_i}$ which intersects $H_{l_i}$.  Let $E$ be an EG
  edge in $H_{l_i}$ with terminal vertex $w\in G^1$.  Note that if
  $G^1$ deformation retracts onto a circle with vertex $v$, then some
  EG edge in $H_{l_i}$ must be incident to $v$, since otherwise
  $f|_{G_{l_i}}$ would not be a CT.  Thus, by replacing $E$ if
  necessary, we may assume in this situation that $w$ is on the
  circle.  Using the inductive hypothesis and the fact that mixed
  turns are legal, we can connect the vertex $\tau_E$ to the vertex
  $\tau_{\overline{E}}$ in $\mathcal{CSP}_{l_i}$.  Then we can follow
  a leaf of the lamination going backwards until we return to $w$, say
  along $E'$.  If $E=E'$, then the leaves of the lamination were
  non-orientable in the first place, and all the vertices labeled by
  edges in $H_{l_i}$ are in the same strongly connected component of
  $\mathcal{CSP}_{l_i}$.  Otherwise, apply the inductive hypothesis
  again and use the fact that mixed turns are legal to get a path from
  $\tau_{E'}$ to $\tau_{\overline{E}'}$.  This shows all vertices
  labeled by edges in $H_{l_i}$ are in the same strongly connected
  component of $\mathcal{CSP}_{l_i}$.  We will henceforth denote the
  strongly connected component of $\mathcal{CSP}_{l_i}$ which contains
  all these vertices by $\mathcal{CSP}_{l_i}^{EG}$.

  If there is an INP $\sigma$ of height $H_{l_i}$, its first and last
  edges are necessarily in $H_{l_i}$.  Remark \ref{rmk:INPs} then
  implies that $\tau_\sigma$ and $\tau_{\overline{\sigma}}$ are in
  $\mathcal{CSP}_{l_i}^{EG}$.  Recall that the only allowable terms in
  complete splittings which intersect zero strata are connecting paths
  which are both maximal and \emph{taken}.  In particular, each vertex
  in $\mathcal{CSP}_{l_i}$ corresponding to such a connecting path is
  in the aforementioned strongly connected component,
  $\mathcal{CSP}_{l_i}^{EG}$.

  Now let $E$ be an NEG edge in $H_{l_i}^c$ with terminal vertex $w$.
  There is necessarily an outgoing edge from $\tau_{E}$ into
  $\mathcal{CSP}_{l_{i-1}}^w$ and an incoming edge to $\tau_E$ from
  $\mathcal{CSP}_{l_i}^{EG}$.  If the graph $G_{l_{i-1}}^w$ is not a
  topological circle, then the corresponding component
  $\mathcal{CSP}_{l_{i-1}}^w$ is already strongly connected and there
  is a directed edge from this graph back to $\tau_{\overline{E}}$ and
  from there back into $\mathcal{CSP}_{l_i}^{EG}$.  Thus, this
  subgraph is contained in the strongly connected component
  $\mathcal{CSP}_{l_i}^{EG}$.  On the other hand, if $G_{l_{i-1}}^w$
  is a topological circle, then there is a directed edge from
  $\tau_{E}$ back into $\mathcal{CSP}_{l_i}^{EG}$ because mixed turns
  are legal, and as before, some edge in $H_{l_i}$ must be incident to
  $w$.  Thus all the vertices in $\mathcal{CSP}_{l_i}$ labeled by NEG
  edges are in the strongly connected component
  $\mathcal{CSP}_{l_i}^{EG}$, as are all vertices in
  $\mathcal{CSP}_{l_{i-1}}^w$ for $w$ as above.

  The same argument and the inductive hypothesis shows that for any
  component of $G_{l_{i-1}}$ which intersects $H_{l_i}$, the
  corresponding strongly connected component(s) of
  $\mathcal{CSP}_{l_{i-1}}$ are also in $\mathcal{CSP}_{l_i}^{EG}$.
  The only thing remaining is to deal with NEG Nielsen paths and
  families of exceptional paths.  Both of these are handled by Remark
  \ref{rmk:INPs} and the fact that we have already established that
  $\mathcal{CSP}_{l_i}^{EG}$ contains all vertices of the form
  $\tau_E$ or $\tau_{\overline{E}}$ for NEG edges in $H_{l_i}^c$.  We
  have shown that every vertex of a strongly connected component of
  $\mathcal{CSP}_{l_{i-1}}$ coming from a component of $G_{l_{i-1}}$
  which intersects $H_{l_i}^c$ is in the strongly connected component
  $\mathcal{CSP}_{l_i}^{EG}$.  In particular, there is only one
  strongly connected component of $\mathcal{CSP}_{l_i}$ for the
  component of $G_{l_i}$ which contains edges in $H_{l_i}^c$.  This
  completes the proof of the proposition.
\end{proof}

In the proof of Theorem \ref{thm:PG-case}, we will need to consider a
weakening of the complete splitting of paths and circuits.  The
\emph{quasi-exceptional splitting} of a completely split path or
circuit $\sigma$ is the coarsening of the complete splitting obtained
by considering each quasi-exceptional subpath to be a single element.
Given a CT $f\colon G\to G$, we define the graph
$\mathcal{CSP}^{QE}(f)$ by adding two vertices to $\mathcal{CSP}(f)$
for each QE-family (one for $E_iu^*\overline{E}_j$ and one for
$E_ju^*\overline{E}_i$).  For every vertex $\tau_\sigma$ with a
directed edge terminating at $\tau_{E_i}$ add an edge from
$\tau_\sigma$ to $\tau_{E_iu^*\overline{E}_j}$ and similarly for every
edge emanating from $\tau_{\overline{E}_j}$, add an edge to the same
vertex beginning at $\tau_{E_iu^*\overline{E}_j}$.  Do the same for
the vertex $\tau_{E_ju^*\overline{E}_i}$.  As before, every completely
split path $\sigma$ gives rise to a directed edge path in
$\mathcal{CSP}^{QE}$ corresponding to its QE-splitting.  It follows
immediately from the definition and Proposition
\ref{prop:split-circuit} that

\begin{cor}\label{cor:QE-circuit}
  There is a completely split circuit $\sigma$ containing every
  allowable term in its QE-splitting.
\end{cor}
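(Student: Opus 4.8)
The plan is to deduce Corollary \ref{cor:QE-circuit} from Proposition \ref{prop:split-circuit} by showing that the larger digraph $\mathcal{CSP}^{QE}(f)$ is again strongly connected, and then extracting a directed closed walk meeting all of its vertices. Since $\mathcal{CSP}(f)$ is a subgraph of $\mathcal{CSP}^{QE}(f)$ and is strongly connected by Proposition \ref{prop:split-circuit}, it suffices to verify that each of the two new vertices attached to a QE-family, say $\tau_{E_iu^*\overline{E}_j}$, is strongly connected to $\mathcal{CSP}(f)$: that it has a directed edge coming in from some vertex of $\mathcal{CSP}(f)$ and a directed edge going out to some vertex of $\mathcal{CSP}(f)$.

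Both of these follow from Remark \ref{rmk:edges}. Since $E_i$ is a linear edge it lies in an NEG stratum, so $\tau_{E_i}$ and $\tau_{\overline{E}_j}$ are honest vertices of $\mathcal{CSP}(f)$. By Remark \ref{rmk:edges}, $\tau_{E_i}$ has an incoming edge in $\mathcal{CSP}(f)$, say from $\tau_\sigma$; by the definition of $\mathcal{CSP}^{QE}(f)$ this produces a directed edge $\tau_\sigma\to\tau_{E_iu^*\overline{E}_j}$. Likewise $\tau_{\overline{E}_j}$ has an outgoing edge to some $\tau_{\sigma'}$, and the construction gives a directed edge $\tau_{E_iu^*\overline{E}_j}\to\tau_{\sigma'}$. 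Combined with strong connectivity of $\mathcal{CSP}(f)$, this shows that every vertex of $\mathcal{CSP}^{QE}(f)$ can reach $\tau_{E_iu^*\overline{E}_j}$ (route inside $\mathcal{CSP}(f)$ to $\tau_\sigma$, then take the new edge) and that $\tau_{E_iu^*\overline{E}_j}$ can reach every vertex (take the new edge to $\tau_{\sigma'}$, then route inside $\mathcal{CSP}(f)$). The symmetric argument handles $\tau_{E_ju^*\overline{E}_i}$, so $\mathcal{CSP}^{QE}(f)$ is strongly connected.

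It remains to pass from strong connectivity back to circuits. Enumerating the finitely many vertices of $\mathcal{CSP}^{QE}(f)$ as $v_1,\dots,v_N$ and concatenating, for each $i$, a directed path from $v_i$ to $v_{i+1}$ (indices mod $N$) yields a directed closed walk meeting every vertex. As noted when $\mathcal{CSP}^{QE}(f)$ was defined, such a closed walk is realized by a completely split circuit $\sigma$ in $G$ (the only ambiguity being the choice of exponents at NEG Nielsen and QE vertices), and its QE-splitting visits every vertex of $\mathcal{CSP}^{QE}(f)$, hence contains at least one instance of every allowable term. I do not expect a serious obstacle here: all of the real difficulty was already absorbed into Proposition \ref{prop:split-circuit}, and the only point requiring care is checking, via Remark \ref{rmk:edges}, that the new QE-vertices are genuinely glued into the single strongly connected component.
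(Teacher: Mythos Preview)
Your proposal is correct and is exactly the argument the paper has in mind: the paper simply states that the corollary ``follows immediately from the definition [of $\mathcal{CSP}^{QE}(f)$] and Proposition \ref{prop:split-circuit},'' and you have spelled out precisely those details---that the new QE-vertices inherit incoming and outgoing edges from $\tau_{E_i}$ and $\tau_{\overline{E}_j}$, hence lie in the same strongly connected component as $\mathcal{CSP}(f)$.
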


We are now ready to prove our main result in the polynomial case.

\subsection{Polynomial Subgroups are Undistorted}

In this subsection, we will complete the proof of our main result in
the polynomial case.  We first recall the height function defined by
Alibegovi\'{c} in \cite{Ali-TL}.  Given two conjugacy classes
$[u],[w]$ of elements of $F_n$, define the \emph{twisting of $[w]$
  about $[u]$} as
\begin{equation*}
  \tw_u(w)=\max\{k\mid w=au^kb \text{ where }u,w
  \text{ are a cyclically reduced conjugates of }[u],[w]\}
\end{equation*}
Then define the twisting of $[w]$ by
$\tw(w)=\max\{\tw_u(w)\mid u\in F_n\}$.  Alibegovi\'{c} proved the
following lemma using bounded cancellation, which we restate for
convenience.  A critical point is that $D_2$ is independent of $w$.

\begin{lemma}[{\cite[Lemma 2.4]{Ali-TL}}]
  \label{lem:generator-twists}
  There is a constant $D_2$ such that $\tw(s(w))\leq \tw(w)+D_2$ for
  all conjugacy classes $w$ and all $s\in S$, our symmetric finite
  generating set of $\Out(F_n)$.
\end{lemma}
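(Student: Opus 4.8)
The plan is to reduce to a single automorphism and then run a bounded cancellation argument, following Alibegovi\'{c}. Since $\tw$ depends only on the conjugacy class and $S$ is finite, it suffices to fix for each $s\in S$ an automorphism $\Phi_s\in\Aut(F_n)$ representing $s$ and to prove: for every $\Phi\in\Aut(F_n)$ there is a constant $D(\Phi)$ with $\tw(\Phi(w))\le\tw(w)+D(\Phi)$ for all conjugacy classes $w$; then $D_2=\max_{s\in S}D(\Phi_s)$ works. The basic input is Cooper's bounded cancellation lemma: there is a constant $C(\Phi)$ so that whenever a word $g_1g_2$ is reduced, at most $C(\Phi)$ letters cancel when freely reducing $\Phi(g_1)\Phi(g_2)$.

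I would first record a consequence used in both directions. If $u$ is cyclically reduced and we write the cyclically reduced conjugate of $\Phi(u)$ as $\Phi(u)=p\,u_\Phi\,p^{-1}$, then applying bounded cancellation to the reduced word $u\cdot u$ shows that exactly $|p|$ letters cancel in $\Phi(u)\Phi(u)$, so $|p|\le C(\Phi)$; hence $\Phi(u^k)=p\,u_\Phi^{k}\,p^{-1}$, reduced as written, for all $k$. Thus an automorphism transports a long periodic word $u^k$ to the long periodic word $u_\Phi^{k}$ up to a conjugating prefix of bounded length.

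For the main estimate, set $k=\tw(w)$ and choose a cyclically reduced conjugate $z$ of $\Phi(w)$ witnessing $\tw(\Phi(w))=:m$, so $z={u'}^{m}h'$ as a reduced word with $u'$ cyclically reduced and $m$ maximal (we may take $u'$ root-free). Let $v$ be the cyclically reduced conjugate of $\Phi^{-1}(u')$. Then $w$ is the conjugacy class of $\Phi^{-1}(z)=\Phi^{-1}(u')^{m}\Phi^{-1}(h')=p'\,v^{m}\,p'^{-1}\,\Phi^{-1}(h')$ with $|p'|\le C(\Phi^{-1})$ by the previous step, and bounded cancellation for $\Phi^{-1}$ applied to ${u'}^{m}\cdot h'$ bounds the cancellation between $\Phi^{-1}({u'}^{m})$ and $\Phi^{-1}(h')$ by $C(\Phi^{-1})$. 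So $w$ is represented by a word $p'\,v^{m-a}\,\tau$ with $a\le C(\Phi^{-1})$, and passing to the cyclically reduced form amounts to reducing $v^{m-a}\tau p'$ cyclically. If this removes at most $b=b(\Phi)$ copies of $v$, then $\tw(w)\ge\tw_v(w)\ge m-a-b$, so $\tw(\Phi(w))=m\le\tw(w)+a+b$ and we finish with $D(\Phi)=C(\Phi^{-1})+b(\Phi)$.

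The main obstacle is bounding $b$, since the tail $\tau$ has unbounded length and could a priori end in a long power of $\overline{v}$ that eats into $v^{m-a}$. The plan for this is that cancelling $j$ copies of $v$ forces $\tau$, hence $\Phi^{-1}(h')$, to end in $\overline{v}^{\,j}$ \emph{exactly}; then applying $\Phi$ together with the conjugating-prefix estimate once more forces $h'$ to end, up to a suffix of bounded length, in a long power of $\overline{u'}$ --- and this contradicts the maximality of $m$ together with the fact that $z={u'}^{m}h'$ is cyclically reduced, once $j$ exceeds a constant depending only on $C(\Phi)$, $C(\Phi^{-1})$, and the Lipschitz constant of $\Phi$. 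The delicate point throughout is to keep $\overline{v}$ separate from its cyclic rotations: a long power of a \emph{rotation} of $\overline{v}$ at the end of $\tau$ is harmless, as its final segment fails to inverse-match $v$. Everything else is routine bookkeeping; this is in essence the proof of the quoted Lemma~2.4 of Alibegovi\'{c}.
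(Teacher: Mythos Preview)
The paper does not give its own proof of this lemma; it is quoted from \cite{Ali-TL} and used as a black box. Your strategy --- pull the witness for $\tw(\Phi(w))$ back through $\Phi^{-1}$ using bounded cancellation --- is the right one, and the argument is correct up to and including the reduced form $[\Phi^{-1}(z)]=p'\,v^{m-a}\,\tau$ with $a$ and $|p'|$ bounded by $C(\Phi^{-1})$. But you have misdiagnosed the ``main obstacle''. The bound on $b$ is immediate from the very observation you already recorded for $u$, applied instead to $z$: since $z$ is cyclically reduced, $z\cdot z$ is reduced, and bounded cancellation for $\Phi^{-1}$ yields $[\Phi^{-1}(z)]=q\,w_0\,q^{-1}$ with $w_0$ cyclically reduced and $|q|\le C(\Phi^{-1})$. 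Hence passing to the cyclically reduced representative of $w$ deletes at most $C(\Phi^{-1})$ letters from each end of $p'\,v^{m-a}\,\tau$, so at most $2C(\Phi^{-1})$ copies of $v$ can be lost, and $\tw(w)\ge\tw_v(w_0)\ge m-O\bigl(C(\Phi^{-1})\bigr)$. Nothing further is required.

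By contrast, your proposed workaround does not close the gap as stated. Pushing a terminal block $\overline{v}^{\,j}$ in $\tau$ forward through $\Phi$ does show that $h'$ ends, up to a bounded suffix, in $\overline{u'}^{\,j-O(1)}$; but this is perfectly compatible with $z=u'^{m}h'$ being cyclically reduced --- take for instance $z=u'^{m}\,x\,\overline{u'}^{\,j}\,y$ with single letters $x,y$ chosen to avoid cancellation --- and the maximality of $m=\tw(z)$ then only gives $j\le m+O(1)$, not a bound depending on $\Phi$ alone. So the remedy is not the maximality argument but the one-line observation above: you already know that $\Phi^{-1}$ carries a cyclically reduced word to one that is cyclically reduced up to a conjugating prefix of length at most $C(\Phi^{-1})$; apply that to $z$ and you are done.
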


Since we typically work with train tracks, we have a similar notion of
twisting adapted to that setting.  Let $\tau$ be a path or circuit in
a graph $G$ and let $\sigma$ be a circuit in $G$.  Define the
\emph{twisting of $\tau$ about $\sigma$} as
\begin{equation*}
  \tw_\sigma(\tau)=\max\{k\mid \tau=\alpha\sigma^k\beta 
  \text{ where the path }\alpha\sigma^k\beta
  \text{ is immersed} \}
\end{equation*}
Then define $\tw(\tau)=\max\{\tw_\sigma(\tau)\mid \sigma$ is a
circuit$\}$.  The bounded cancellation lemma of \cite{Coo-BCC}
directly implies

\begin{lemma}
  \label{lem:marking-twists}
  If $\rho\colon R_n\to G$ and $[w]$ is a conjugacy class in
  $F_n=\pi_1(R_n)$, then $\tw(\rho(w))\ge \tw(w)-2C_\rho$.
\end{lemma}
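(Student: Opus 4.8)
The plan is to invoke the bounded cancellation lemma of Cooper \cite{Coo-BCC} in its standard form: for the homotopy equivalence $\rho\colon R_n\to G$ there is a constant $C_\rho$ so that for any lift $\tilde\rho\colon \widetilde{R_n}\to\Gamma$ and any two points $x,y$ in $\widetilde{R_n}$, the image $\tilde\rho([x,y])$ lies in the $C_\rho$-neighborhood of the geodesic $[\tilde\rho(x),\tilde\rho(y)]$; equivalently, when one tightens the image of an edge path under $\rho$, at each concatenation point at most $C_\rho$ edges are cancelled on each side. The point is to transport a large power of $u$ sitting inside $w$ through the map $\rho$ while controlling how much of that power is eaten by cancellation.

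Concretely, I would first set up notation: let $k=\tw(w)$, so after replacing $u$ and $w$ by cyclically reduced conjugates in their conjugacy classes we may write $w=au^kb$ as a reduced word, and hence the conjugacy class $[w]$ is represented by the cyclic word $(au^kb)$. Applying $\rho$ and tightening, $\rho(w)$ is represented by the cyclic word $[\rho(a)\,\rho(u)^k\,\rho(b)]$ after reduction; similarly $\rho(u)$ tightens to some circuit $\sigma$ in $G$ representing $\rho_\#[u]$. The key estimate is that in the reduced form of $\rho(u)^k$ the cancellation occurs only in windows of size at most $C_\rho$ around each of the $k-1$ junctions between consecutive $\rho(u)$-blocks, so the tightened path $\rho(u)^k_\#$ contains $\sigma^{k-2C_\rho}$ as a subpath — one loses at most $2C_\rho$ copies of $\sigma$ to absorb all of the at-most-$C_\rho$-per-side cancellation at the $k-1$ internal junctions plus the two ends where $\rho(a)$ and $\rho(b)$ meet the block. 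Since $\sigma^{k-2C_\rho}$ appears as an immersed subpath of the circuit $\rho(w)$, we get $\tw_\sigma(\rho(w))\ge k-2C_\rho$, and therefore $\tw(\rho(w))\ge\tw(w)-2C_\rho$, which is the claim.

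The one genuine subtlety — and the step I expect to need the most care — is bookkeeping the cancellation count: a priori cancellation could in principle propagate through more than one $\rho(u)$-block if $\rho(u)$ itself is short, so one must phrase the bounded cancellation constant correctly. The clean way is to use the ``geodesic stays near the image of the geodesic'' formulation: lift to universal covers, let $\gamma$ be a fundamental domain of the axis $A_u$ in $\widetilde{R_n}$ concatenated $k$ times to give a segment of $A_u$ of combinatorial length $k|u|$, and observe that $\tilde\rho$ restricted to this segment is an $L$-Lipschitz map (for $L$ the Lipschitz constant of $\rho$) whose image $\tilde\rho(\gamma)$ therefore lies within $C_\rho$ of the geodesic $[\tilde\rho(\gamma(0)),\tilde\rho(\gamma(k|u|))]$, which is a subray of $A_{\rho_\#(u)}$; counting how many full fundamental domains of $A_{\rho_\#(u)}$ are traversed between the $C_\rho$-collars at the two ends yields the bound $k-2C_\rho$ (after possibly absorbing the constant $|u|$-dependence into $C_\rho$, which is legitimate since $u$ is fixed once we fix $w$ — or, better, by noting $\tw$ only records full powers so any honest linear-in-$k$ loss with slope $1$ and bounded additive error suffices, and one can enlarge $C_\rho$ to swallow the additive error). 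Everything else is routine once this estimate is in hand.
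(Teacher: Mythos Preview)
Your approach is exactly what the paper intends: the paper's own proof is the single sentence ``The bounded cancellation lemma of \cite{Coo-BCC} directly implies'', so any correct unpacking of that implication is acceptable, and your universal-cover/axis picture in the second paragraph is the right way to make it precise.

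One point needs tightening. The constant $C_\rho$ must depend only on $\rho$, not on $w$ or on the $u$ realizing $\tw(w)$: the lemma is applied in Theorem~\ref{thm:PG-case} uniformly over all conjugacy classes $w$, so your first suggested fix (``$u$ is fixed once we fix $w$'') is not acceptable. Your first-paragraph accounting is also muddled: phrasing it as ``at-most-$C_\rho$-per-side cancellation at the $k-1$ internal junctions'' suggests a loss linear in $k$, which would be fatal. The clean argument is the one you nearly reach at the end. Since $u$ is cyclically reduced, one has $\rho_\#(u^k)=\gamma\sigma^k\bar\gamma$ with $|\gamma|\le C_\rho$: the internal junctions all cancel the \emph{same} tail $\gamma$, and a single application of bounded cancellation to $u\cdot u$ shows $|\gamma|\le C_\rho$. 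Two further applications of bounded cancellation at the junctions with $\rho_\#(a)$ and $\rho_\#(b)$ then remove at most $C_\rho$ edges from each end of $\gamma\sigma^k\bar\gamma$, hence at most $C_\rho$ edges from each end of $\sigma^k$. Since $|\sigma|\ge 1$, that is at most $C_\rho$ full periods from each end, giving $\tw_\sigma(\rho(w))\ge k-2C_\rho$ with no dependence on $u$ whatsoever.
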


We are now ready to prove non-distortion for polynomial abelian
subgroups.  Recall the map $\Omega\colon H\to \mathbb{Z}^{N+K}$ was
defined by taking the product of comparison and expansion factor
homomorphisms.  In the following theorem, we will denote the
restriction of this map to the last $K$ coordinates (those
corresponding to comparison homomorphisms) by $\Omega_{comp}$.

\begin{thm}
  \label{thm:PG-case}
  Let $H$ be a rotationless abelian subgroup of $\Out(F_n)$ and assume
  that the map from $H$ into the collection of comparison factor
  homomorphisms $\Omega_{comp}\colon H\to \mathbb{Z}^K$ is injective.
  Then $H$ is undistorted.
\end{thm}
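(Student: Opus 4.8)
The plan is to run the argument of Theorem~\ref{thm:EG-case}, with Alibegovi\'{c}'s twisting function $\tw$ playing the role that the translation length $\tau$ played there. Since distortion is unchanged under passage to a finite index subgroup, and since $\Omega_{comp}$ is an injective homomorphism of finitely generated abelian groups and hence a quasi-isometric embedding into $\mathbb{Z}^{K}$ (so that $|\Omega_{comp}(\psi)|_{\infty}$ is comparable to the word length $|\psi|_{H}$), it is enough to find a constant $C$ with
\[
  |\psi|_{\Out(F_n)} \;\ge\; \tfrac{1}{D_2}\bigl(|\Omega_{comp}(\psi)|_{\infty}-C\bigr)
\]
for all $\psi$ in some finite index subgroup of $H$, where $D_2$ is the constant of Lemma~\ref{lem:generator-twists}. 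First I would pass to a finite index subgroup so that Proposition~\ref{prop:finite-graphs} applies, and consider the finitely many open sectors $\mathcal{S}$ cut out of $H\otimes\mathbb{R}$ by the hyperplanes $\{\omega=0\}$, one for each comparison homomorphism $\omega$ of $H$. For each $\mathcal{S}$, fix a generic element $\gamma_{\mathcal S}\in\mathcal{S}$, a CT $f_{\mathcal S}\colon G_{\mathcal S}\to G_{\mathcal S}$ representing it, and, using Corollary~\ref{cor:QE-circuit}, a completely split circuit $\sigma_{\mathcal S}$ in $G_{\mathcal S}$ whose QE-splitting contains every linear edge and every quasi-exceptional family of $G_{\mathcal S}$; let $[c_{\mathcal S}]$ be the conjugacy class in $F_n$ that $\sigma_{\mathcal S}$ determines via the marking.

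The heart of the argument is the following estimate for $\psi$ in the closed sector $\overline{\mathcal{S}}$, i.e.\ with $\omega(\psi)$ of the same sign as $\omega(\gamma_{\mathcal S})$, or zero, for each comparison homomorphism $\omega$. Exactly as in the proof of Proposition~\ref{prop:finite-graphs}, such a $\psi$ is represented by a map $f_{\mathbf{a}}\colon G_{\mathcal S}\to G_{\mathcal S}$ obtained from $f_{\mathcal S}$ by the disintegration of \cite{FH-AB} along an admissible tuple $\mathbf{a}$ (admissibility only requires the coordinates of $\mathbf{a}$ to be non-negative, which holds because $\psi\in\overline{\mathcal{S}}$). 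This $f_{\mathbf{a}}$ has the same underlying graph, filtration and induced train track structure as $f_{\mathcal S}$, differing only in the axis powers inserted at linear edges; in particular the QE-splitting $\sigma_{\mathcal S}=\tau_1\cdot\tau_2\cdots\tau_r$ remains a splitting for $f_{\mathbf{a}}$, so $(f_{\mathbf{a}})_{\#}(\sigma_{\mathcal S})$ is the concatenation $(f_{\mathbf{a}})_{\#}(\tau_1)\cdots(f_{\mathbf{a}})_{\#}(\tau_r)$ with no cancellation between terms. On a linear edge $E$ with axis $u$ one has $(f_{\mathbf{a}})_{\#}(E)=Eu^{\omega_{E}(\psi)}$, and on a QE-term $Eu^{p}\bar{E}'$ one gets $Eu^{\,p+\omega_{Eu^{*}\bar{E}'}(\psi)}\,\bar{E}'$. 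Choosing the comparison homomorphism $\omega$ at which $|\Omega_{comp}(\psi)|_{\infty}$ is attained (which is nonzero unless $\psi$ is trivial) together with the linear edge or QE-family of $G_{\mathcal S}$ to which it corresponds, we conclude that $(f_{\mathbf{a}})_{\#}(\sigma_{\mathcal S})$ contains an immersed power $u^{m}$ of the axis with $|m|\ge|\Omega_{comp}(\psi)|_{\infty}-C_1$, where $C_1$ bounds the axis exponents occurring in the finitely many circuits $\sigma_{\mathcal S}$. By bounded cancellation \cite{Coo-BCC} (cf.\ Lemma~\ref{lem:marking-twists}) there is a constant $C_2$ with $\tw(\psi\cdot[c_{\mathcal S}])=\tw\bigl([(f_{\mathbf{a}})_{\#}(\sigma_{\mathcal S})]\bigr)\ge|\Omega_{comp}(\psi)|_{\infty}-C_1-C_2$.

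To finish, set $C=C_1+C_2+\max_{\mathcal S}\tw([c_{\mathcal S}])$, which is independent of $\psi$. Writing $\psi=s_1\cdots s_l$ with $l=|\psi|_{\Out(F_n)}$ and iterating Lemma~\ref{lem:generator-twists} gives $\tw(\psi\cdot[c_{\mathcal S}])\le\tw([c_{\mathcal S}])+lD_2$; combining this with the previous paragraph yields the displayed inequality. Since the finitely many closed sectors cover $H$, this holds for all $\psi\in H$, and together with the trivial upper bound $|\psi|_{\Out(F_n)}\le(\max_i|\phi_i|_{\Out(F_n)})\cdot|\psi|_{H}$ it shows that $H\hookrightarrow\Out(F_n)$ is a quasi-isometric embedding.

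I expect the main obstacle to be the core estimate of the second paragraph. It requires combining Corollary~\ref{cor:QE-circuit} (the real content of the preceding subsection, resting on Proposition~\ref{prop:split-circuit}) with a careful check that the disintegration representative $f_{\mathbf{a}}$ acts on the circuit $\sigma_{\mathcal S}$ without introducing any cancellation, so that the twisting it produces is read off exactly by the comparison homomorphisms --- and that this holds for \emph{every} $\psi$ in the closed sector $\overline{\mathcal{S}}$, not merely the generic ones. The remaining steps (bounded cancellation, Lemma~\ref{lem:generator-twists}, and the reduction to a quasi-isometry statement in $\mathbb{Z}^{K}$) are formally the same as in the exponential case.
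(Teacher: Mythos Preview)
Your approach is correct in outline and uses the same toolkit as the paper (Proposition~\ref{prop:finite-graphs}, Corollary~\ref{cor:QE-circuit}, Alibegovi\'{c}'s twisting lemma, bounded cancellation), but the route is genuinely different from the one the paper takes, and it is worth spelling out the tradeoff.

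The paper does \emph{not} fix one test circuit per sector. Instead, for each generic $\psi$ it takes the CT $f\colon G\to G$ representing $\psi$ itself, applies Corollary~\ref{cor:QE-circuit} to \emph{that} CT to obtain a circuit $\sigma$, and then iterates: from $\tw(f_\#^t(\sigma))\ge t|\omega(\psi)|$ a pigeonhole argument produces some $t_0$ with $\tw(f_\#^{t_0}(\sigma))-\tw(f_\#^{t_0-1}(\sigma))\ge|\omega(\psi)|$, and the conjugacy class used is $w=\rho^{-1}(f_\#^{t_0-1}(\sigma))$, which depends on $\psi$. The point of the iteration is precisely to avoid the question you are raising: the paper never needs to know that the circuits $\sigma$ range over a finite set, nor that the QE-splitting of a single $\sigma_{\mathcal S}$ is respected by every disintegration representative $f_{\mathbf a}$ in the sector.

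Your version trades the iteration trick for that structural claim: you fix $\sigma_{\mathcal S}$ once per sector, apply $f_{\mathbf a}$ a single time, and read off the twisting directly. This is cleaner and gives a finite list of test conjugacy classes, but the step ``the QE-splitting of $\sigma_{\mathcal S}$ remains a splitting for $f_{\mathbf a}$, which differs from $f_{\mathcal S}$ only in axis powers'' is doing real work. It is true for generic $\mathbf a$ --- all the $f_{\mathbf a}$ are CTs on the same filtered graph with the same gates, linear edges, axes, EG INPs, and (quasi-)exceptional families --- but it requires going back into \cite[\S6]{FH-AB} (essentially the content behind Lemma~6.18 there) rather than following from anything stated in this paper. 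You correctly flag this as the main obstacle. One caution: your extension to the \emph{closed} sector $\overline{\mathcal S}$ is not supported by Proposition~\ref{prop:finite-graphs}, whose proof handles non-generic elements on different graphs via induction; for $\psi$ on a wall the tuple $\mathbf a$ has a zero coordinate and $f_{\mathbf a}$ need not be a CT. The easy fix is the paper's: restrict to generic $\psi$ and note that every element of $H$ is a bounded distance from a generic one.
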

\begin{proof}
  The first step is to note that it suffices to prove the generic
  elements of $H$ are uniformly undistorted.  This is just because the
  set of non-generic elements of $H$ is a finite collection of
  hyperplanes, so there is a uniform bound on the distance from a
  point in one of these hyperplanes to a generic point.

  We set up some constants now for later use.  This is just to
  emphasize that they depend only on the subgroup we are given and the
  data we have been handed thus far.  Let $\mathcal{G}$ be the finite
  set of marked graphs provided by Proposition
  \ref{prop:finite-graphs} and define $K_2$ as the maximum of
  $BCC(\rho_G)$ and $BCC(\rho^{-1}_G)$ as $G$ varies over the finitely
  many marked graphs in $\mathcal{G}$.  Lemma \ref{lem:marking-twists}
  then implies that $\tw(\rho(w))\ge \tw(w)-K_2$ for any conjugacy
  class $w$ and any of the finitely many marked graphs in
  $\mathcal{G}$.  Let $D_2$ be the constant from Lemma
  \ref{lem:generator-twists}.

  Fix a minimal generating set $\phi_1,\ldots,\phi_k$ for $H$ and let
  $\psi=\phi_1^{p_1}\cdots\phi_k^{p_k}$ be generic in $H$.  Let
  $f\colon G\to G$ be a CT representing $\psi$ with $G$ chosen from
  $\mathcal{G}$ and let $\omega$ be the comparison homomorphism for
  which $\omega(\psi)$ is the largest.  The key point is that given
  $\psi$, Corollary \ref{cor:QE-circuit} will provide a split circuit
  $\sigma$ for which the twisting will grow by $|\omega(\psi)|$ under
  application of the map $f$.

  Indeed, let $\sigma$ be the circuit provided by Corollary
  \ref{cor:QE-circuit}.  As we discussed in section
  \ref{sec:comp-homos}, there is a correspondence between the
  comparison homomorphisms for $H$ and the set of linear edges and
  quasi-exceptional families in $G$.  Assume first that $\omega$
  corresponds to the linear edge $E$ with axis $u$, so that by
  definition $f(E)=E\cdot u^{\omega(\psi)}$.  Since the splitting of
  $f_\#(\sigma)$ refines that of $\sigma$ and $E$ is a term in the
  complete splitting of $\sigma$, $f_\#(\sigma)$ not only contains the
  path $E\cdot u^{\omega(\psi)}$, but in fact splits at the ends of
  this subpath.  Under iteration, we see that $f_\#^t(\sigma)$
  contains the path $E\cdot u^{t\omega(\psi)}$, and therefore
  $\tw(f_\#^t(\sigma))\geq t|\omega(\psi)|$.  This isn't quite good
  enough for our purposes, so we will argue further to conclude that
  for some $t_0$,
  \begin{equation}\label{eq:1}
    \tw(f_\#^{t_0}(\sigma))-\tw(f_\#^{t_0-1}(\sigma))\geq |\omega(\psi)|
  \end{equation}
  Suppose for a contradiction that no such $t$ exists.  Then for every
  $t$, we have
  $\tw(f^t_\#(\sigma))-\tw(f^{t-1}_\#(\sigma))\leq |\omega(\psi)|-1$.
  Using a telescoping sum and repeatedly applying this assumption, we
  obtain $\tw(f^t_\#(\sigma))-\tw(\sigma)\leq t|\omega(\psi)|-t$.
  Combining and rearranging inequalities, this implies
  \begin{equation*}
    \tw(\sigma)
    \geq \tw(f^t_\#(\sigma))+t-t|\omega(\psi)|
    \geq t|\omega(\psi)|+t-t|\omega(\psi)|=t
  \end{equation*}
  for all $t$, a contradiction.  This establishes the existence of
  $t_0$ satisfying equation \eqref{eq:1}.

  The above argument works without modification in the case that
  $\omega$ corresponds to a family of quasi-exceptional paths.  We now
  address the minor adjustment needed in the case that $\omega$
  corresponds to a family of exceptional paths,
  $E_i u^*\overline{E}_j$.  Let $f(E_i)=E_iu^{d_i}$ and
  $f(E_j)=E_ju^{d_j}$.  Since $\sigma$ contains both
  $E_i u^*\overline{E}_j$ and $E_j u^*\overline{E}_i$ in its complete
  splitting, we may assume without loss that $d_i>d_j$.  The only
  problem is that the exponent of $u$ in the term
  $E_i u^*\overline{E}_j$ occuring in the complete splitting of
  $\sigma$ may be negative, so that $\tw(f^t_\#(\sigma))$ may be less
  than $t|\omega(\psi)|$.  In this case, just replace $\sigma$ by a
  sufficiently high iterate so that the exponent is positive.

  Now write $\psi$ in terms of the generators $\psi=s_1s_2\cdots s_p$
  so that for any conjugacy class $w$, by repeatedly applying Lemma
  \ref{lem:generator-twists} we obtain
  \begin{equation*}
    \tw(s_1(s_2\cdots s_p(w)))
    \leq \tw(s_2(s_3\cdots s_p(w)))+D_2
    \leq \tw(s_3\cdots s_p(w))+2D_2
    \leq\ldots\leq \tw(w)+pD_2
  \end{equation*}
  so that, $D_2|\psi|_{\Out(F_n)}\geq \tw(\psi(w))-\tw(w)$.  Applying
  this inequality to the circuit $f^{t_0-1}_\#(\sigma)$ just
  constructed, and letting $w$ be the conjugacy class
  $\rho^{-1}(f^{t_0-1}_\#(\sigma))$, we have
  \begin{equation*}\label{eq:2}
    |\psi|_{\Out(F_n)}
    \geq \frac{1}{D_2}\left[\tw(\psi(w))-\tw(w)\right]
    \geq\frac{1}{D_2}\left[\tw(f^{t_0}_\#(\sigma))-\tw(f^{t_0-1}_\#(\sigma))\right]-\frac{2K_2}{D_2}
    \geq\frac{1}{D_2}|\omega(\psi)|-\frac{2K_2}{D_2}
  \end{equation*}
  The second inequality is justified by Lemma \ref{lem:marking-twists}
  and the third uses the property of $\sigma$ established in
  \eqref{eq:1} above.  Since $\omega$ was chosen to be largest
  coordinate of $\Omega_{comp}(\psi)$ and $\Omega_{comp}$ is
  injective, the proof is complete.
\end{proof}

\section{The Mixed Case}
\label{sec:mixed-case}
There are no additional difficulties with the mixed case since both
the distance function on $\CV_n$ and Alibegovi\'{c}'s twisting
function are well suited for dealing with outer automorphisms whose
growth is neither purely exponential nor purely polynomial.
Consequently, for an element $\psi$ of an abelian subgroup $H$, if the
image of $\psi$ is large under $PF_H$ then we can use $\CV_n$ to show
that $|\psi|_{\Out(F_n)}$ is large, and if the image is large under
$\Omega_{comp}$ then we can use the methods from \S \ref{sec:PG-case}
to show $|\psi|_{\Out(F_n)}$ is large.  The injectivity of $\Omega$
\cite[Lemma 4.6]{FH-AB} exactly says that if $|\psi|_H$ is large, then
at least one of the aforementioned quantities must be large as well.

\begin{thm}
  \label{thm:undistorted}
  Abelian subgroups of $\Out(F_n)$ are undistorted.
\end{thm}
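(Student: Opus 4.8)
The plan is to combine the exponential lower bound of \S\ref{sec:EG-case} with the polynomial lower bound of \S\ref{sec:PG-case}, linked by the injectivity of $\Omega$. Since distortion is unaffected by passing to a finite index subgroup, I would first replace $H$ by the finite index rotationless subgroup supplied by Proposition \ref{prop:finite-graphs}, so that every element has a CT representative on one of finitely many marked graphs $\mathcal{G}$; call this subgroup $H$ again. By \cite[Lemma 4.6]{FH-AB} the map $\Omega=(PF_H,\Omega_{comp})\colon H\to\mathbb{Z}^{N+K}$ is an injective homomorphism, and an injective homomorphism between finitely generated abelian groups is a quasi-isometric embedding, so there are constants $A,B$ depending only on $H$ with
\begin{equation*}
  |\psi|_H\ \le\ A\bigl(\|PF_H(\psi)\|_\infty+\|\Omega_{comp}(\psi)\|_\infty\bigr)+B
\end{equation*}
for every $\psi\in H$. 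The reverse inequality $|\psi|_{\Out(F_n)}\le C\,|\psi|_H$ is automatic for a finitely generated subgroup, so it suffices to bound $|\psi|_{\Out(F_n)}$ below by a uniform linear function of $\|PF_H(\psi)\|_\infty$ and, separately, of $\|\Omega_{comp}(\psi)\|_\infty$. As in the proof of Theorem \ref{thm:PG-case} it is enough to do this for generic $\psi$, since the non-generic elements form a finite union of hyperplanes and hence lie at uniformly bounded $H$-distance from generic elements.

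For the first bound I would rerun the $\CV_n$ computation from the proof of Theorem \ref{thm:EG-case}, discarding its injectivity hypothesis. Fix a symmetric generating set $S$ of $\Out(F_n)$, a basepoint $*\in\CV_n$, and set $D_1=\max_{s\in S}d(*,*\cdot s)$; then $|\psi|_{\Out(F_n)}\ge D_1^{-1}d(*,*\cdot\psi)\ge D_1^{-1}\tau(\psi)$ by the triangle inequality and the isometric action. Every coordinate of $PF_H(\psi)$ is indexed by some $\Lambda\in\mathcal{L}(H)$, and by the description of the expansion factor homomorphisms recalled in \S\ref{sec:preliminaries} together with Theorem \ref{taulength}: if $\Lambda\in\mathcal{L}(\psi)$ then $|PF_\Lambda(\psi)|\le\tau(\psi)$, if $\Lambda\in\mathcal{L}(\psi^{-1})$ then $|PF_\Lambda(\psi)|\le\tau(\psi^{-1})$, and otherwise $PF_\Lambda(\psi)=0$. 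Since $S$ is symmetric, $|\psi|_{\Out(F_n)}=|\psi^{-1}|_{\Out(F_n)}\ge D_1^{-1}\tau(\psi^{-1})$ as well, so $|\psi|_{\Out(F_n)}\ge D_1^{-1}\|PF_H(\psi)\|_\infty$.

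For the second bound I would reuse the twisting argument from the proof of Theorem \ref{thm:PG-case} essentially verbatim. Let $f\colon G\to G$ be a CT representing the generic element $\psi$ with $G\in\mathcal{G}$, and let $\omega$ be a comparison homomorphism with $|\omega(\psi)|=\|\Omega_{comp}(\psi)\|_\infty$; by the correspondence of \S\ref{sec:comp-homos}, $\omega$ is carried by a linear edge, an exceptional family, or a quasi-exceptional family in $G$. Corollary \ref{cor:QE-circuit} produces a completely split circuit $\sigma$ containing every allowable QE-term in its QE-splitting, and tracking the subpath of $\sigma$ corresponding to $\omega$ under iteration of $f_\#$ yields, exactly as in \eqref{eq:1}, an integer $t_0$ with $\tw(f^{t_0}_\#(\sigma))-\tw(f^{t_0-1}_\#(\sigma))\ge|\omega(\psi)|$; Lemmas \ref{lem:generator-twists} and \ref{lem:marking-twists}, with the bounded cancellation constant $K_2$ taken uniformly over the finitely many graphs of $\mathcal{G}$, then give $|\psi|_{\Out(F_n)}\ge D_2^{-1}|\omega(\psi)|-2K_2D_2^{-1}$. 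Combining the two bounds with the displayed estimate for $|\psi|_H$ shows that $H$ is undistorted. The only content not already present in \S\S\ref{sec:EG-case}--\ref{sec:PG-case} is the bookkeeping that every constant in sight ($A$, $B$, $D_1$, $D_2$, $K_2$) depends only on $H$ together with the fixed $S$ and $*$ — which is precisely what Proposition \ref{prop:finite-graphs}, the finiteness of $\mathcal{L}(H)$, and the finiteness of the set of comparison homomorphisms guarantee — together with the observation that neither lower bound ever used the injectivity of $PF_H$ or of $\Omega_{comp}$ in isolation; I expect this uniformity check to be the only mild obstacle.
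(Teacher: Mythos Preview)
Your proposal is correct and follows the paper's overall strategy: combine the $\CV_n$-translation bound from \S\ref{sec:EG-case} with the twisting bound from \S\ref{sec:PG-case}, glued together by the injectivity of $\Omega$ from \cite[Lemma 4.6]{FH-AB}, after reducing to generic elements and passing to the finite-index subgroup of Proposition \ref{prop:finite-graphs}.

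The one place you diverge is in the exponential half, and your version is actually cleaner. The paper selects $k$ coordinates of $\Omega$ to get an injective restriction $\Omega_\pi$, then passes to a further finite-index subgroup to diagonalise the expansion-factor part of the chosen basis, and finally runs the orthant-by-orthant argument of Theorem \ref{thm:EG-case} (replacing $\phi_i$ by $\phi_i^{-1}$ and invoking Lemma \ref{lem:paired-lamination} to arrange all $PF_{\Lambda_i}(\psi)>0$). You bypass all of this with the direct observation that for every $\Lambda\in\mathcal{L}(H)$ one has $|PF_\Lambda(\psi)|\le\max\{\tau(\psi),\tau(\psi^{-1})\}$ by Theorem \ref{taulength} and the trichotomy recalled in \S\ref{sec:preliminaries}, together with $|\psi|_{\Out(F_n)}=|\psi^{-1}|_{\Out(F_n)}$; this gives $|\psi|_{\Out(F_n)}\ge D_1^{-1}\|PF_H(\psi)\|_\infty$ immediately, with no need for diagonalisation, orthants, or paired laminations. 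The polynomial half is identical to the paper's, and your explicit remark that neither lower bound uses the injectivity of $PF_H$ or $\Omega_{comp}$ separately is exactly the point.
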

\begin{proof}
  Assume, by passing to a finite index subgroup, that $H$ is
  rotationless.  By \cite[Lemma 4.6]{FH-AB}, the map
  $\Omega\colon H\to \mathbb{Z}^{N+K}$ is injective.  Choose a minimal
  generating set for $H$ and write
  $H=\langle \phi_1,\ldots,\phi_k\rangle$.  The restriction of
  $\Omega$ to the first $N$ coordinates is precisely the map $PF_H$
  from section \ref{sec:EG-case}.  Choose $k$ coordinates of $\Omega$
  so that the restriction $\Omega_\pi$ to those coordinates is
  injective.  Let $PF_{\Lambda_1},\ldots,PF_{\Lambda_l}$ be the subset
  of the chosen coordinates corresponding to expansion factor
  homomorphisms.  Pass to a finite index subgroup of $H$ and choose
  generators so that
  $\Omega_\pi(\phi_i)=(0,\ldots,PF_{\Lambda_i}(\phi_i),\ldots,0)$ for
  $1\leq i\leq l$.  Now we proceed as in the proofs of Theorems
  \ref{thm:EG-case} and \ref{thm:PG-case}.

  Fix a basepoint $*\in\CV_n$ and let
  $\psi=\phi_1^{p_1}\cdots\phi_k^{p_k}$ in $H$.  We may assume without
  loss that $\psi$ is generic in $H$ (again, it suffices to prove that
  generic elements are uniformly undistorted).  Replace the $\phi_i$'s
  by their inverses if necessary to ensure that all $p_i$'s are
  non-negative.  Then, for each of the first $l$ coordinates of
  $\Omega_\pi$, replace $\Lambda_i$ by its paired lamination if
  necessary (Lemma \ref{lem:paired-lamination}) to ensure that
  $PF_{\Lambda_i}(\psi)>0$.  Look at the coordinates of
  $\Omega_\pi(\psi)$ and pick out the one with the largest absolute
  value.  We first consider the case where the largest coordinate
  corresponds to an expansion factor homomorphism $PF_{\Lambda_j}$.
  We have already arranged that $PF_{\Lambda_j}(\psi)>0$.

  By Theorem \ref{taulength}, the translation distance of $\psi$ is the
  maximum of the Perron-Frobenius eigenvalues associated to the EG
  strata of a relative train track representative $f$ of $\psi$.
  Since $\psi$ is generic and the first $l$ coordinates of
  $\Omega_\pi$ are non-negative,
  $\{\Lambda_1,\ldots,\Lambda_l\} \subset \mathcal{L}(\psi)$.  Each
  $\Lambda_i$ is associated to an EG stratum of $f$.  For such a
  stratum, the logarithm of the PF eigenvalue is
  $PF_{\Lambda_i}(\psi)$.  Just as in the proof of Theorem
  \ref{thm:EG-case}, for each $1\leq i\leq l$, we have that
  $PF_{\Lambda_i}(\psi)=p_i PF_{\Lambda_i}(\phi_i)$.  So the
  translation distance of $\psi$ acting on Outer Space is
  \begin{equation*}
    \tau(\psi)\geq\max\{p_iPF_{\Lambda_i}(\phi_i)\mid 1\leq i\leq l\}    
  \end{equation*}
  The inequality is because there may be other laminations in
  $\mathcal{L}(\psi)$.  Just as in Theorem \ref{thm:EG-case}, we have
  \begin{equation*}
    d(*,*\cdot\psi)\leq D_1|\psi|_{\Out(F_n)}
  \end{equation*}
  where $D_1=\max_{s\in S}d(*,*\cdot s)$.  Let
  $K_1=\min\{PF_{\Lambda_i^\pm}(\phi_j^\pm)\mid 1\leq i\leq l, 1\leq
  j\leq k\}$.  Then we have
  \begin{equation*}
    |\psi|_{\Out(F_n)}
    \geq\frac{1}{D_1}d(*,*\cdot \psi)
    \geq\frac{1}{D_1}\tau(\psi)
    \geq\frac{1}{D_1}\max\{p_iPF_{\Lambda_i}(\phi_i)\mid 1\leq i\leq k\}
    \geq\frac{K_1}{D_1}\max\{p_i\}
  \end{equation*}

  We now handle the case where the largest coordinate of
  $\Omega_\pi(\psi)$ corresponds to a comparison homomorphism
  $\omega$.  Let $\mathcal{G}$ be the finite set of marked graphs
  provided by Proposition \ref{prop:finite-graphs} and let
  $f\colon G\to G$ be a CT for $\psi$ where $G\in\mathcal{G}$.  Define
  $K_2$ exactly as in the proof of Theorem \ref{thm:PG-case} so that
  $\tw(\rho(w))\geq \tw(w)-K_2$ for all conjugacy classes $w$ and any
  marking or inverse marking of the finitely many marked graphs in
  $\mathcal{G}$.  The construction of the completely split circuit
  $\sigma$ satisfying equation \eqref{eq:1} given in the polynomial
  case works without modification in our current setting, where the
  comparison homomorphism $\omega$ in equation \eqref{eq:1} is the
  coordinate of $\Omega_\pi$ which is largest in absolute value.

  Using this circuit and defining $w=\rho^{-1}(f^{t_0-1}_\#\sigma)$,
  the inequalities and their justifications in the proof of Theorem
  \ref{thm:PG-case} now apply verbatim to the present setting to
  conclude
  \begin{equation*}
    |\psi|_{\Out(F_n)}
    \geq\frac{1}{D_2}\max\{|\omega(\psi)|\mid \omega\in\Omega_\pi\}|-\frac{2K_2}{D_2}
  \end{equation*}
  We have thus shown that the image of $H$ under $\Omega_\pi$
  undistorted.  Since $\Omega_\pi$ is injective, it is a
  quasi-isometric embedding of $H$ into $\mathbb{Z}^k$, so the theorem
  is proved.
\end{proof}

We conclude by proving the rank conjecture for $\Out(F_n)$.  The
maximal rank of an abelian subgroup of $\Out(F_n)$ is $2n-3$, so
Theorem \ref{thm:undistorted} gives a lower bound for the geometric
rank of $\Out(F_n)$: $\rk\Out(F_n)\leq 2n-3$.  The other inequality
follows directly from the following result, whose proof we sketch
below.

\begin{thm}\label{thm:vcd-rank}
  If $G$ has virtual cohomological dimension $k\geq 3$, then $\rk(G)\leq k$.
\end{thm}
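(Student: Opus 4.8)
The plan is to reduce the statement to a purely coarse-geometric fact about the dimension of contractible models. Recall that the geometric rank $\rk(G)$ is the largest $r$ admitting a quasi-isometric embedding $\mathbb{R}^r\hookrightarrow G$ (equivalently, of $\mathbb{Z}^r$). First I would pass to a torsion-free finite index subgroup $G'\le G$: since the geometric rank is a quasi-isometry invariant and finite index subgroups are quasi-isometric to the ambient group, $\rk(G)=\rk(G')$, while $\operatorname{cd}(G')=\operatorname{vcd}(G)=k$ by Serre's theorem. Because $k\ge 3$, the Eilenberg--Ganea theorem upgrades this to $\operatorname{gd}(G')=k$, so---assuming $G'$ is of type $F$, which holds in our application to $\Out(F_n)$ via the spine of Outer space---there is a \emph{finite} $K(G',1)$ complex $X$ with $\dim X=k$. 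Let $\widetilde X$ be its universal cover: a contractible $k$-dimensional complex on which $G'$ acts freely and cocompactly, hence $\widetilde X$ is quasi-isometric to $G'$ and, by cocompactness, uniformly contractible with bounded geometry. Thus a quasi-isometrically embedded $\mathbb{R}^r$ in $G$ yields a quasi-isometric embedding $j\colon\mathbb{R}^r\to\widetilde X$, and it suffices to prove $r\le k$.

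The engine is Gromov's notion of macroscopic dimension: a metric space $Y$ has $\dim_{MC}(Y)\le n$ if it admits a continuous, proper, uniformly cobounded (uniformly bounded point-preimages) map to a simplicial complex of dimension $\le n$. Two facts are needed. First, macroscopic dimension is monotone under quasi-isometric embeddings into uniformly contractible complexes of bounded geometry: given $j\colon\mathbb{R}^r\to\widetilde X$, I would replace $j$, within bounded distance, by a continuous (after subdivision, cellular) map $\bar j$---here uniform contractibility and bounded geometry of $\widetilde X$ are used to fill the images of boundaries of simplices by controlled-size disks---and then observe that $\bar j$ is itself a continuous, proper, uniformly cobounded map from $\mathbb{R}^r$ to the $k$-dimensional complex $\widetilde X$ (point-preimages are uniformly bounded because $\bar j$ is a quasi-isometric embedding), so $\dim_{MC}(\mathbb{R}^r)\le k$. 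Second, Gromov's theorem that $\dim_{MC}(\mathbb{R}^r)=r$, a large-scale Lebesgue covering / Sperner-type statement asserting $\mathbb{R}^r$ admits no uniformly cobounded map to a complex of dimension $<r$. Combining the two gives $r=\dim_{MC}(\mathbb{R}^r)\le k$, which is the theorem.

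The main obstacle is fact two: the lower bound $\dim_{MC}(\mathbb{R}^r)\ge r$ is the only genuinely nontrivial input, and I would cite it from Gromov rather than reprove it; the rest is bookkeeping, the one delicate point being the controlled continuous approximation $j\rightsquigarrow\bar j$, which uses bounded geometry and uniform contractibility of $\widetilde X$ in an essential way. I would end by noting that the hypothesis $k\ge 3$ is used exactly once, to produce a $k$-dimensional classifying complex via Eilenberg--Ganea; for $k\le 2$ the same scheme yields only $\rk(G)\le 3$, but in the intended application $k=2n-3\ge 3$ for all $n\ge 3$, while the excluded case $n=2$ (where $\Out(F_2)\cong\mathrm{GL}_2(\mathbb{Z})$ is virtually free) has $\rk=1=2n-3$ trivially, so Corollary \ref{cor:rank} holds in all cases.
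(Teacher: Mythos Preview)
Your proposal is correct and follows essentially the same route as the paper: pass to a finite-index subgroup, invoke Eilenberg--Ganea to obtain a $k$-dimensional $K(G',1)$, use uniform contractibility (the ``connect-the-dots'' argument) to replace the quasi-isometric embedding by a continuous map, and then derive a contradiction from a large-scale covering-dimension fact about $\mathbb{R}^r$. The only difference is packaging: where you cite Gromov's macroscopic-dimension theorem $\dim_{MC}(\mathbb{R}^r)=r$, the paper unpacks that statement by hand, constructing an explicit cover of $\widetilde X$ with multiplicity $k+1$ (via the nerve of the barycentric subdivision), pulling it back to $\mathbb{R}^{k+1}$, and scaling to contradict the Lebesgue covering dimension of compact subsets. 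You are in fact more careful than the paper's sketch in flagging the type-$F$ hypothesis needed for \v{S}varc--Milnor and for uniform contractibility/bounded geometry of $\widetilde X$; the paper glosses over this, but it is available in the intended application to $\Out(F_n)$ via the spine of Outer space.
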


The virtual cohomological dimension of $\Out(F_n)$ is $2n-3$ \cite{CV-VCD}.  Thus, for $n\geq 3$, we have:

\begin{cor}
  \label{cor:rank}
  The geometric rank of $\Out(F_n)$ is $2n-3$, which is the maximal
  rank of an abelian subgroup of $\Out(F_n)$.
\end{cor}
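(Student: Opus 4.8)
The plan is to pin down $\rk\Out(F_n)$ by proving the two inequalities $\rk\Out(F_n)\ge 2n-3$ and $\rk\Out(F_n)\le 2n-3$ separately. For the first, recall the standard fact that $\Out(F_n)$ contains an abelian subgroup $A$ of rank $2n-3$, i.e.\ $A\cong\mathbb{Z}^{2n-3}$; by Theorem~\ref{thm:undistorted} the inclusion $A\hookrightarrow\Out(F_n)$ is a quasi-isometric embedding, so $\mathbb{Z}^{2n-3}$ quasi-isometrically embeds in $\Out(F_n)$, and therefore $\rk\Out(F_n)\ge 2n-3$. For the reverse inequality I would apply Theorem~\ref{thm:vcd-rank} with $G=\Out(F_n)$, using $\operatorname{vcd}\Out(F_n)=2n-3$ \cite{CV-VCD}; since $n\ge 3$ we have $2n-3\ge 3$, so the theorem gives $\rk\Out(F_n)\le 2n-3$. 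Combining the two proves the corollary.

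The substance therefore lies in Theorem~\ref{thm:vcd-rank}, which I would prove by \emph{macroscopic dimension}. Suppose $\operatorname{vcd}(G)=k\ge 3$ and, toward a contradiction, that $\iota\colon\mathbb{Z}^{k+1}\hookrightarrow G$ is a quasi-isometric embedding. First pass to a torsion-free finite-index subgroup $G'\le G$; then $\operatorname{cd}(G')=k$, the inclusion $G'\hookrightarrow G$ is a quasi-isometry, and post-composing $\iota$ with a quasi-inverse gives a coarse embedding $\mathbb{Z}^{k+1}\hookrightarrow G'$. Because $\operatorname{cd}(G')=k\ge 3$, the Eilenberg--Ganea theorem lets one take a cocompact classifying space for $G'$ of covering dimension exactly $k$ (for $G=\Out(F_n)$ the Culler--Vogtmann spine already furnishes such a model, a contractible complex of dimension $2n-3$ on which a torsion-free finite-index subgroup acts freely and cocompactly); metrize it to obtain a uniformly contractible, cocompact space $X$ with $\dim X=k$. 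By Gromov's theorem $\operatorname{mdim}(X)\le k$, hence $\operatorname{mdim}(G')\le k$ in any word metric. On the other hand $\operatorname{mdim}(\mathbb{Z}^{k+1})=k+1$, and $\operatorname{mdim}$ is monotone under coarse embeddings: a Lipschitz map from $X$ to a complex of dimension $\le k$ with uniformly bounded point-preimages, precomposed with a coarse embedding $Y\hookrightarrow X$, still has uniformly bounded point-preimages. Thus $k+1=\operatorname{mdim}(\mathbb{Z}^{k+1})\le\operatorname{mdim}(G')\le k$, a contradiction, so no such $\iota$ exists and $\rk(G)\le k$.

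The one genuinely non-formal ingredient — and the step I expect to be the main obstacle — is the lower bound $\operatorname{mdim}(\mathbb{Z}^{k+1})\ge k+1$; this is the coarse-geometric shadow of the fact that $\mathbb{R}^{k+1}$ admits no proper map of nonzero degree onto a complex of dimension $\le k$ (equivalently, $H^{k+1}_c(\mathbb{R}^{k+1})\ne 0$), and it is for this that one appeals to the literature, where the hypothesis $k\ge 3$ (and the low-dimensional cases) is handled. An essentially equivalent route that avoids $\operatorname{mdim}$ altogether replaces it by the coarse cohomological dimension $\operatorname{ccd}$: one checks $\operatorname{ccd}(\mathbb{Z}^{k+1})=k+1$, that $\operatorname{ccd}$ is monotone under coarse embeddings, and that $\operatorname{ccd}(G')\le\operatorname{cd}(G')=k$ when $G'$ acts freely cocompactly on a $k$-dimensional contractible complex, and concludes the same contradiction. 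The remaining points are routine: a finite-index subgroup has the same virtual cohomological dimension and the same geometric rank as the ambient group (being quasi-isometric to it), and the existence of an abelian subgroup of $\Out(F_n)$ of rank exactly $2n-3$ is classical.
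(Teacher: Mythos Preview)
Your derivation of the corollary matches the paper's exactly: the lower bound from Theorem~\ref{thm:undistorted} applied to a rank $2n-3$ abelian subgroup, and the upper bound from Theorem~\ref{thm:vcd-rank} together with $\operatorname{vcd}\Out(F_n)=2n-3$.

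Where you diverge is in the proof of Theorem~\ref{thm:vcd-rank} itself. The paper gives a direct, elementary argument: pass to a finite-index $G'$ with $\operatorname{cd}(G')=k$, take a $k$-dimensional simplicial $K(G',1)$ via Eilenberg--Ganea, use uniform contractibility of $\tilde X$ to make a hypothetical quasi-isometric embedding $\mathbb{R}^{k+1}\to\tilde X$ continuous, build an explicit open cover of $\tilde X$ with nerve the barycentric subdivision (hence multiplicity $\le k+1$), pull it back to a cover of $\mathbb{R}^{k+1}$ by uniformly bounded sets with the same multiplicity, and then rescale to contradict Lebesgue covering dimension of compacta in $\mathbb{R}^{k+1}$. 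Your route packages the same content as monotonicity of macroscopic dimension (or coarse cohomological dimension) under coarse embeddings, together with $\operatorname{mdim}(\mathbb{Z}^{k+1})=k+1$ and $\operatorname{mdim}(\tilde X)\le\dim\tilde X$. The two arguments are essentially equivalent---the paper's explicit cover is precisely a witness that $\operatorname{mdim}(\tilde X)\le k$, and its covering-dimension contradiction is the concrete content of $\operatorname{mdim}(\mathbb{R}^{k+1})\ge k+1$. Your version is shorter but imports named invariants and their properties from the literature; the paper's version is self-contained and avoids those black boxes, at the cost of spelling out the cover construction and the rescaling step by hand.
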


\begin{proof}[Proof of \ref{thm:vcd-rank}]
  Let $G'\leq G$ be a finite index subgroup whose cohomological
  dimension is $k$.  Since $G$ is quasi-isometric to its finite index
  subgroups, we have $\rk(G')=\rk(G)$.  A well known theorem of
  Eilenberg-Ganea \cite{EG-GD} provides the existence of a
  $k-$dimensional CW complex $X$ which is a $K(G',1)$.  By
  \v{S}varc-Milnor, it suffices to show that there can be no
  quasi-isometric embedding of $\mathbb{R}^{k+1}$ into the universal
  cover $\tilde{X}$.  Suppose for a contradiction that
  $f\colon \mathbb{R}^{k+1}\to \tilde{X}$ is such a map.  The first
  step is to replace $f$ by a continuous quasi-isometry $f'$ which is
  a bounded distance from $f$.  This is done using the
  ``connect-the-dots argument'' whose proof is sketched in
  \cite{SW-QI}.  The key point is that $\tilde{X}$ is \emph{uniformly
    contractible}.  That is, for every $r$, there is an $s=s(r)$, such
  that any continuous map of a finite simplicial complex into $X$
  whose image is contained in an $r$-ball is contractible in an
  $s(r)$-ball.

  It is a standard fact \cite[Theorem 2C.5]{Hat-AT} that $X$ may be
  replaced with a simplicial complex of the same dimension so that
  $\tilde{X}$ may be assumed to be simplicial.  We now construct a
  cover $\mathcal{U}$ of the simplicial complex $\tilde{X}$ whose
  nerve is equal to the barycentric subdivision of $\tilde{X}$.  The
  cover $\mathcal{U}$ has one element for each cell of $\tilde{X}$.
  For each vertex $v$, the set $U_v\in\mathcal{U}$ is a small
  neighborhood of $v$.  For each $i$-cell, $\sigma$, Define $U_\sigma$
  by taking a sufficiently small neighborhood of
  $\sigma\setminus \bigcup_{\sigma'\in \tilde{X}^{(i-1)}}U_{\sigma'}$
  to ensure that $U_\sigma\cap \tilde{X}^{(i-1)}=\emptyset$.  The key
  property of $\mathcal{U}$ is that all $(k+2)-$fold intersections are
  necessarily empty because the dimension of the barycentric
  subdivision of $\tilde{X}$ is equal to $\dim(\tilde{X})$.

  Since we have arranged $f$ to be continuous, we can pull back the
  cover just constructed to obtain a cover
  $\mathcal{V}=\{f^{-1}(U)\}_{U\in\mathcal{U}}$ of $\mathbb{R}^{k+1}$.
  Since the elements of $\mathcal{U}$ are bounded, and $f$ is a
  quasi-isometric embedding, the elements of $\mathcal{V}$ are bounded
  as well.  The intersection pattern of the elements of $\mathcal{V}$
  is exactly the same as the intersection pattern of elements of
  $\mathcal{U}$.  But the cover $\mathcal{U}$ was constructed so that
  any intersection of $(k+2)$ elements is necessarily empty.  Thus, we
  have constructed a cover of $\mathbb{R}^{k+1}$ by bounded sets with
  no $(k+2)-$fold intersections.  We will contradict the fact that the
  Lebesgue covering dimension of any compact subset of
  $\mathbb{R}^{k+1}$ is $k+1$.  Let $K$ be compact in
  $\mathbb{R}^{k+1}$ and let $\mathcal{V}'$ be an arbitrary cover of
  $K$.  Let $\delta$ be the constant provided by the Lebesgue covering
  Lemma applied to $\mathcal{V}'$.  Since the elements of
  $\mathcal{V}$ are uniformly bounded, we can scale them by a single
  constant to obtain a cover of $K$ whose sets have diameter
  $<\delta/3$.  Such a cover is necessarily a refinement of
  $\mathcal{V}'$, but has multiplicity $k+1$.  This contradicts the
  fact that $K$ has covering dimension $k+1$ so the theorem is proved.
\end{proof}
\bibliographystyle{alpha}%
\bibliography{bibliography}
\end{document}